\newtheorem{assump}{Assumption}
\title{Impulse Control of Multi-dimensional Jump Diffusions in Finite Time Horizon}
\author{Yann-Shin Aaron Chen \thanks{Department of Mathematics, University of California at Berkeley, CA 94720-3840. Email address: \texttt{yac@math.berkeley.edu}}
\and Xin Guo \thanks{Department of Industrial Engineering and Operations Research, University of California at Berkeley, CA 94720-1777. Email address: \texttt{xinguo@ieor.berkeley.edu}}  }
\begin{document}

\maketitle

\begin{abstract}\noindent
This paper analyzes a class of impulse control problems for multi-dimensional jump diffusions in the finite time horizon. Following the basic mathematical setup from Stroock and Varadhan \cite{StroockVaradhan06}, this paper first establishes rigorously an appropriate form of Dynamic Programming Principle (DPP). It then shows that the value function is a  viscosity solution for the associated Hamilton-Jacobi-Belleman (HJB) equation involving integro-differential operators. Finally, under additional assumptions that the jumps are of infinite activity but are of finite variation and that the diffusion is uniformly elliptic, it proves that the value function is the unique viscosity solution and has $W_{loc}^{(2,1),p}$ regularity for $1< p< \infty$.
\end{abstract}

\begin{keywords}
Stochastic Impulse Control, Viscosity solution, Parabolic Partial Differential Equations
\end{keywords}

\begin{AMS}
49J20, 49N25, 49N60
\end{AMS}

\section{Introduction}
This paper considers a class of impulse control problem for an
$n$-dimensional diffusion process $X_t$ in the form of Equation (\ref{LevySDE}).
The objective is to choose an appropriate  impulse control
so that a certain class of cost function in the form of (\ref{problem}) can be minimized.

Impulse control,
in contrast to regular and singular controls, allows the state space to be discontinuous and is  a more
natural mathematical framework  for many applied problems in engineering and economics.
Examples in financial mathematics include  portfolio management with transaction costs \cite{BP00, Korn98, Korn99, EH88,
MP95, OS02}, insurance models \cite{JS95, CCTZ06}, liquidity
risk \cite{VMP07},  optimal control of exchange rates
\cite{Jeanblanc93, MO98, CZ99}, and  real options
\cite{TH90, MT94}.
 Similar to their regular and singular control counterparts, impulse control problems can be analyzed via various approaches.  One approach is to focus on solving the value function for the associated (quasi)-variational inequalities or Hamilton-Jacobi-Bellman (HJB) integro-differential equations, and then establishing the optimality of the solution by verification theorem. (See $\O$ksendal and Sulem \cite{OS04}.)
 Another approach is to characterize the value function of the control problem as a (unique) viscosity solution to the associated PDEs, and/or to study their regularities. In both approaches, in order to connect the PDEs and the original control problems, some form of the Dynamic Programming Principle (DPP) is usually implicitly or explicitly assumed.

Compared to the regular and the singular controls, the main difficulty with impulse controls is the associated non-local operator, which is more difficult to analyze via the classical PDEs tools. When jumps are added to the diffusion, one also has to deal with an integral-differential operator instead of the differential operator. The earliest mathematical literature on impulse controls is the well-known book by Bensoussan and Lions  \cite{BL82}, where value functions of the control problems for diffusions without jumps were shown to satisfy the
quasi-variational-inequalities and where their regularity properties were
established when the control is strictly positive and the state
space is in a bounded region. See also work by Menaldi \cite{Menaldi80}, \cite{Menaldi81} and \cite{Menaldi87} for jump diffusions with degeneracy. Recently,
Barles, Chasseigne, and Imbert \cite{BCI08} provided a general framework and careful analysis on the unique viscosity solution  of second order nonlinear elliptic/parabolic partial integro-differential equations. However, in all these PDEs-focused  papers, the DPP, the essential link between the PDEs and  control problems, is missing.
 On the other hand, Tang and Yong \cite{TY93} and Ishikawa \cite{Ishikawa04} established some version of the DPP and the uniqueness of the viscosity solution  for  diffusions  without jumps.  More recently, Seydel \cite{Seydel09}  used a version of the DPP for Markov controls to study the viscosity solution of control problems on jump diffusions. This Markovian assumption simplifies the proof of DPP significantly.
 Based on the Markovian setup of \cite{Seydel09}, Davis, Guo and Wu \cite{DGW10} focused on the regularities of the viscosity solution associated with the control problems on jump diffusions in an infinite time horizon. \cite{DGW10} extended some techniques developed in  Guo and Wu \cite{GW09} and used the key  connection between the non-local operator and the differential operator developed in \cite{GW09}.

In essence, there are three aspects when studying impulse control problems: the DPP, the HJB, and the regularity of the value function. However, all  previous work addressed only one or two of the above aspects and used quite different setups and assumptions, making it difficult to see exactly to what extent all the relevant properties hold in a given framework. This is the motivation of our paper.

\paragraph{Our Results.}

This paper studies the finite time horizon impulse control problem of Eq. (\ref{problem}) on multi-dimensional jump diffusions of Eq. \ref{LevySDE}. Within the same mathematical framework, this paper study  three aspects of the control problem: the DPP, the viscosity solution, and its uniqueness and regularity.

\begin{romannum}
\item First, it takes  the classical setup of Stroock and Varadhan \cite{StroockVaradhan06}, assumes the natural filtration of the underlying Brownian motion and the Poisson process, and establishes a general form of DPP.  This natural filtration is different from the  ``usual hypothesis'', i.e., the  completed right continuous filtration assumed {\it a priori} in most existing works. This specification ensures the existence and certain properties for the existence of the regular conditional probability, crucial for rigorously establishing towards DPP. (See Lemma 4.3.3 from Stroock \& Varadhan \cite{StroockVaradhan06}).
    With additional appropriate  estimation for the value function, the DPP is proved.

We remark that  there are some previous works of DPP for impulse controls.  For instance,  \cite{TY93} proved a form of DPP for diffusions without jumps and \cite{Seydel09}  restricted controls to be only Markov controls. Because of the inclusion of both jumps and  non-Markov controls, there are essential mathematical difficulty for establishing the DPP and hence the necessity to adopt the framework of \cite{StroockVaradhan06}.

Note that an alternative approach would be  to adopt the general weak DPP formulation by Bouchard and Touzi \cite{BT11} and Bouchard and Nutz \cite{BN11}, or the classical work by El Karoui \cite{ElKouri81}. However, verifying the key assumptions of the weak DPP, especially the ``flow property" in a controlled jump diffusion framework, does not appear simpler than directly establishing the regular conditional probability.

\item Second, it shows that  the value function is a  viscosity solution in the sense of \cite{BCI08}. This form of viscosity solution is convenient for the HJB equations involving integro-differential operators, which is the key for analyzing  control problems on jump diffusions.

Again, special cases have been studied in  \cite{Seydel09}  for the Markov controls  and \cite{TY93} for diffusions without jumps.

\item  Third, under additional assumption that the jumps are of finite variation with possible infinite activity, it proves the $W^{(2,1),p}_{loc}$ regularity and  the unique viscosity solution properties of the value function.  Note that the uniqueness of the viscosity solution in our paper is a ``local'' uniqueness, which is appropriate to study the  regularity property.

 Compared to \cite{GW09} without jumps and especially \cite{DGW10} with jumps for infinite horizon problems, this paper is on a finite time horizon which requires  different techniques. First, it is more difficult in a parabolic setting to obtain a priori  estimates for the value function of the stochastic control problem, especially with the relaxed assumption of the H\"older growth condition (Outstanding Assumption 4 in our paper). Our estimation extends the earlier work of \cite{TY93} to diffusions with jumps.
Secondly, from a PDE perspective, we  introduce the notion of H\"older continuity on the measure (Assumption 11). We believe that Assumptions 10 and 11 are more general than those in \cite{DGW10}, and are consistent with the  approach in \cite{StroockVaradhan06} with the focus on the integro-differential operator itself. Finally,  \cite{GW09} \cite{DGW10}  studies neither the DPP nor the uniqueness of the viscosity solution.
  There were  also studies by Xing and Bayraktar \cite{BX09} and Pham \cite{Pham98} on value functions for optimal stopping problems for jump diffusions. Their work however did not involve controls. \footnote{We are  brought to attention by one of the referees some very recent and nice work by \cite{BX12} and \cite{BEM12} regarding the regularity analysis for optimal stopping and impulse control problems with infinite variation jumps.}


\end{romannum}


\section{Problem Formulation and Main Results}


\subsection{Problem formulation}

\paragraph{Filtration} Fix a time $T> 0$. For each $t_0 \in [0, T]$, let $(\Omega, {\cal F}, \mathbb{P})$ be a probability space  that supports a Brownian motion $\{ W_\cdot \} _{t_0 \leq s \leq T}$ starting at $t_0$, and an independent Poisson point process $N(dt, dz)$ on $([t_0, T], \mathbb{R}^k \setminus \{0\})$ with intensity $L \otimes \rho$. Here $L$ is the Lebesgue measure on $[t_0, T]$ and $\rho$ is a measure defined on $\mathbb{R}^k \setminus \{0\}$.
For each $t \in [t_0, T]$, define $\{{\cal F}_t^{W,N}\}_{t_0 \leq t \leq T}$ to be the natural filtration of the Brownian motion $W$ and the Poisson process $N$,  define  $\{{\cal F}_{t_0, t}[t_0, T]\}$ to be $\{{\cal F}_t^{W,N}\}_{t_0 \leq t \leq T}$ restricted to the interval $[t_0, t]$.

Throughout the paper, we will use  this uncompleted natural filtration $\{{\cal F}_{t_0, t}[t_0, T]\}$.

Now, we can define mathematically the  impulse control problem,  starting with the set of admissible controls.
\begin{definition}
The set of admissible impulse control ${\cal V} [t_0, T]$ consists of pairs of sequences $\{ \tau_i, \xi_i\}_{1 \leq i < \infty}$ such that
\begin{enumerate}
\item
$\tau_i: \Omega \to [t_0, T] \cup \{\infty\}$ such that $\tau_i$ are stopping times with respect to the filtration $\{{\cal F}^{W, N}_{t_0, s}\}_{t_0 \leq s \leq T}$,
\item $\tau_i \leq \tau_{i+1}$ for all $i$,
\item $\xi_i: \Omega \to \mathbb{R}^n \setminus \{ 0\}$ is a random variable such that $\xi_i \in {\cal F}^{W,N}_{t_0, \tau_i}$.
\end{enumerate}
\end{definition}

Now, given an admissible impulse control $\{ \tau_i, \xi_i\}_{1 \leq i < \infty}$,  a stochastic process $(X_t)_{t\ge 0}$ follows a  stochastic differential equation with jumps,
\begin{eqnarray}\label{LevySDE}
X_t   &=&x_0 +  \int_{t_0}^t b(X_{s-}, s) ds + \int_{t_0}^t \sigma (X_{s-}, s) dW_s  + \sum_i \xi_i 1_{( \tau_i \leq t)} \nonumber\\
&&+ \int_{t_0}^t \int j_1(X_{s-}, s, z) N(dz, dt) + \int_{t_0}^t \int j_2(X_{s-}, s, z) \widetilde{N}(dz, dt).
\end{eqnarray}
 Here $\widetilde{N} = N (dt, dz) - \rho(dz) dt$,  $b: \mathbb{R}^n \times [0, T] \times \mathbb{R}^n$, $\sigma : \mathbb{R}^n \times [0, T] \to \mathbb{R}^{n \times m}$, and $j_1, j_2: \mathbb{R}^n \times [0, T]  \times \mathbb{R}^k \to \mathbb{R}^n$. For each $(\tau_i, \xi_i)_i \in {\cal V}[t_0, T]$, and $(x_0, t_0) \in \mathbb{R}^n \times [t_0, T]$, denote $X = X^{t_0, x_0, u, \tau_i, \xi_i}$.

The stochastic control problem is to
\begin{equation}\label{problem}
\mbox{(Problem)} \ \ \mbox{Minimize} \ \ \ J[x_0, t_0, \tau_i, \xi_i] \ \ \mbox{ over all $(\tau_i, \xi_i) \in {\cal V}[t, T]$},
\end{equation}
subject to Eqn. (\ref{LevySDE}) with
\begin{equation}
 J[x_0, t_0, \tau_i, \xi_i] =\mathbb{E} \left[ \int_{t_0}^T f(s, X_s^{t_0, x_0, \tau_i, \xi_i}) ds +g( X_T^{t_0, x_0,\tau_i, \xi_i}) \right]
+  \mathbb{E} \left[\sum_i B(\xi_i, \tau_i) 1_{\{t_0 \leq \tau_i \leq T\}}\right].
\end{equation}
Here we denote $V$ for the associated value function
\begin{equation}\label{eqnvalue}
\mbox{(Value Function) } \ \ V(x, t) = \inf_{ (\tau_i, \xi_i) \in {\cal V}[t, T]} J[x, t, \tau_i, \xi_i].
\end{equation}

In order for $J$ and $V$ to be well defined, and for the Brownian motion $W$ and the Poisson process $N$ as well as the controlled jump process $X^{x_0, t_0, \tau_i, \xi_i}$ to be unique at least in a distribution sense, we shall specify some assumptions in Section \ref{sectionassumpitons}.

The focus of the paper is to analyze the following HJB equation associated with the value function
\begin{eqnarray}\label{vis}
\mbox{(HJB)} \ \  \left\{
\begin{array}{lll}
\max \{ -u_t +L u - f - Iu, u - M u\} = 0 &\mbox{in } \mathbb{R}^n \times (0, T),\nonumber \\
u = g  &\mbox{on } \mathbb{R}^n \times \{t = T\}. \nonumber
\end{array}
\right.
\end{eqnarray}
Here
\begin{eqnarray}\label{operatorI}
I \phi(x, t) & =&\int \phi(x + j_1(x, t, z), t) - \phi(x, t) \rho(dz) \nonumber \\
& &+ \int \phi(x + j_2(x, t, z), t) - \phi(x, t) - D\phi(x, t) \cdot j_2(x, t, z) \rho(dz),
\end{eqnarray}
\begin{equation}\label{operatorM}
M u (x, t) = \inf_{\xi \in \mathbb{R}^n} ( u(x + \xi, t) + B( \xi, t) ),
\end{equation}
\begin{equation}\label{operatorL}
L u (x, t) = - tr \left[ A(x, t) \cdot D^2 u(x, t) \right] - b(x, t) \cdot D u (x, t) + r u(x, t).
\end{equation}


\paragraph{Main result.} Our main result states that the value function $V(x, t)$ is a unique $W^{(2,1), p}_{loc} (\mathbb{R}^n \times (0, T))$ viscosity solution to the  (HJB) equation  with $1, p < \infty$. In particular, for each $t \in [0, T)$, $V(\cdot, t) \in C_{loc}^{1, \gamma}(\mathbb{R}^n)$ for any $0<\gamma < 1$.

The main result is established in three steps.

\begin{itemize}
\item First, in order to connect the (HJB) equation with the value function, we  prove an appropriate form of the DPP. (Theorem \ref{DPP}).

\item Then, we show that the value function is a continuous viscosity solution to the (HJB) equation in the sense of \cite{BCI08}. (Theorem \ref{thmvis}).

\item Finally,  with additional assumptions, we show that the value function is $W_{loc}^{(2,1),p}$ for $1< p < \infty$, and in fact  a unique viscosity solution to the (HJB) equation. (Theorem \ref{thmmain}).

{\it All the results in this paper, unless otherwise specified, are built under the assumptions specified in Section \ref{sectionassumpitons}.}

\end{itemize}

\subsection{Outstanding assumptions}\label{sectionassumpitons}

\begin{assump}\label{assumpSetup}
Given $t_0 \leq T$, assume that
\begin{eqnarray*}
(\Omega_{t_0, T}, {\cal F}, \{{\cal F}_{t_0, t}[t_0, T]\}_{t_0 \leq t \leq T} ) &= &(C[t_0, T] \times M[t_0, T],\\
&& {\cal B}_{t_0, T} [t_0, T] \otimes {\cal M}_{t_0, T}[t_0, T], \\
&&\{ {\cal B}_{t_0, t} [t_0, T] \otimes {\cal M}_{t_0, t}[t_0, T] \}_{t_0 \leq t \leq T} )
\end{eqnarray*}
such that the projection map $(W, N)(x_\cdot , n) = (x_\cdot, n)$ is the Brownian motion and the Poisson point process with density $\rho(dz) \times dt$ under $\mathbb{P}$, and for $t_0 \leq s \leq t \leq T$,
\begin{eqnarray*}
C[t_0, T] &= &\{ x_\cdot : [t_0, T] \to \mathbb{R}^n,  x_{t_0} = 0\},\\
M[t_0, T] &= &\mbox{the class of locally finite measures on }[t_0, T] \times \mathbb{R}^k \setminus \{0\},\\
{\cal B}_{s, t} [t_0, T] &= &\sigma ( \{ x_r : x_\cdot \in C[t_0, T], s \leq r \leq t\}),\\
{\cal M}_{s, t} [t_0, T] &= &\sigma ( \{ n(B): B \in {\cal B}([s, t] \times \mathbb{R}^k \setminus \{0\}), n \in M[t_0, T]\}).
\end{eqnarray*}

\end{assump}

\begin{assump}\label{assumpLip}
(Lipschitz Continuity.) The functions $b$, $\sigma$, and $j$ are deterministic measurable functions such that there exists constant $C$ independent of $t \in [t_0, T]$, $z \in \mathbb{R}^k \setminus \{0\}$ such that
\begin{eqnarray*}
| b(x, t) - b(y, t) | &\leq C | x-y|,\\
|\sigma (x, t) - \sigma(y, t)| &\leq C |x-y|,\\
\int_{|z| \geq 1} |j_1(x, t, z) - j_1(x, t, z)| \rho(dz) &\leq C|x - y|,\\
\int_{|z| <1}  |j_2(x, t, z) - j_2(x, t,z)|^2 \rho(dz) &\leq C|x - y|^2.
\end{eqnarray*}
\end{assump}

\begin{assump}\label{assumpGrowth}
(Growth Condition.) There exists constant $C>0$, $\nu \in [0, 1)$, such that for any $x, y \in \mathbb{R}^n$,
\begin{eqnarray*}
|b(t, x)| &\leq &L(1+|x|^\nu),\\
|\sigma(t, x)| &\leq &L(1+|x|^{\nu/2}),\\
\int_{|z| \geq1} |j_1(x, t, z)| \nu (dz) &\leq &C (1+|x|^\nu),\\
\int_{|z| < 1} |j_2(x, t, z)|^2 \nu (dz)  &\leq &C (1+|x|^\nu).\\
\end{eqnarray*}
\end{assump}

\begin{assump}\label{assumpHolder}
(H$\ddot o$lder Continuity.) $f: [0, T] \times \mathbb{R}^n \times \mathbb{R}$ and $g: \mathbb{R}^n \to \mathbb{R}$ are measurable functions such that there exists $C>0$, $\delta \in (0, 1]$, $\gamma \in (0, \infty)$ such that
\begin{eqnarray*}
|f(t, x) - f(t, \hat x)| \leq C (1 + |x|^\gamma + | \hat x|^\gamma)  |x - \hat x|^\delta,\\
|g(x) - g( \hat x)| \leq C  (1 + |x|^\gamma + | \hat x|^\gamma) |x - \hat x|^\delta,
\end{eqnarray*}
for all $t \in [0, T]$, $x, \hat x \in \mathbb{R}^n$.
\end{assump}

\begin{assump}\label{assumpLower}
(Lower Boundedness) There exists an $L>0$ and $\mu \in (0, 1]$ such that
\begin{eqnarray*}
f(t, x) &\geq &-L,\\
h(x) &\geq &-L,\\
B(\xi, t) &\geq &L + C|\xi|^\mu,
\end{eqnarray*}
for all $t \in [0, T]$, $x \in \mathbb{R}^n$, $\xi \in \mathbb{R}^n$.
\end{assump}

\begin{assump}\label{assumpStruct}
(Monotonicity and Subadditivity) $B: \mathbb{R}^n \times [0, T] \to \mathbb{R}$ is a continuous function such that for any $0 \leq s \leq t \leq T$, $B(t, \xi) \leq B(s, \xi)$, and for $(t, \xi), (t, \hat \xi)$ being in a fixed compact subset of $\mathbb{R}^n \times [0, T)$, there exists constant $K>0$ such that
\begin{eqnarray*}
B(t, \xi + \hat \xi) +K \leq &B(t, \xi) + B(t, \hat \xi).
\end{eqnarray*}
\end{assump}

\begin{assump}\label{assumpDom}
(Dominance) The growth of $B$ exceeds the growth of the cost functions $f$ and $g$  so that 
\begin{eqnarray*}
\delta + \gamma < &\mu,\\
\nu \leq &\mu.
\end{eqnarray*}
\end{assump}

\begin{assump}\label{assumpTerm}
(No Terminal Impulse) For any $x, \xi \in \mathbb{R}^n$,
\begin{eqnarray*}
g(x) \leq \inf_\xi g(x + \xi) + B(\xi, T).
\end{eqnarray*}
\end{assump}

\begin{assump}\label{assumpInt}
Suppose that there exists a measurable map $\pi: \mathbb{R}^n \times [0, T] \to M(\mathbb{R}^n \setminus \{0\})$, in which $M(\mathbb{R}^n \setminus \{0\})$ is the set of locally finite measure on $\mathbb{R}^n \setminus \{0\}$, such that one has the following representation of the integro operator:
\begin{eqnarray*}
I \phi (x, t) = &\int [\phi(x + z, t) - \phi(x, t) - D\phi(x, t) \cdot z 1_{|z| \leq 1}] \pi(x, t, dz).
\end{eqnarray*}
And assume that for $(x, t)$ in some compact subset of $\mathbb{R}^n \times [0, T]$, there exists $C$ such that
\begin{eqnarray*}
\int_{|z|<1} |z|^2 \pi(x, t, dz) + \int_{|z|\geq 1} |z|^{\gamma + \delta} \pi(x, t, dz) \leq C.
\end{eqnarray*}
\end{assump}


\paragraph{Notations}
Throughout the paper,  unless otherwise specified, we will use the following notations.
\begin{itemize}

\item $0 < \alpha \leq 1$.

\item $$A(x, t) = (a_{ij})_{n \times n} (x, t) = \frac{1}{2} \sigma(x, t) \sigma(x, t)^T.$$

\item $\Xi (x, t)$ is the set of points $\xi$ for which $M V $ achieves the value, i.e.,
\begin{eqnarray*}
\Xi(x, t) = \{ \xi \in \mathbb{R}^n: M V (x, t) = V(x + \xi, t) + B(\xi, t)\}.
\end{eqnarray*}
\item The continuation region ${\cal C}$ and the action region ${\cal A}$ are
\begin{eqnarray*}
{\cal C} &:=& \{ (x, t) \in \mathbb{R}^n \times [0, T]: V(x, t) < M V(x, t)\},\\
{\cal A} &:=& \{ (x, t) \in \mathbb{R}^n \times [0, T]: V(x, t) = M V(x, t)\}.
\end{eqnarray*}

\item Let $\Omega$ be a bounded open set in $\mathbb{R}^{n+1}$. Denote $\partial_P \Omega$ to be the parabolic boundary of $\Omega$, which is the set of points $(x_0, t_0) \in \overline \Omega$ such that for all $R>0$, $Q(x_0, t_0; R) \not\subset \overline \Omega$. Here $
Q (x_0, t_0; R)= \{ (x, t) \in \mathbb{R}^{n+1}: \max \{ |x - x_0|, |t-t_0|^{1/2} \} < R, t < t_0 \}$.

Note that $\overline \Omega$ is the closure of the open set $\Omega$ in $\mathbb{R}^{n+1}$. In the special case of a cylinder, $\Omega = Q(x_0, t_0; R)$, the parabolic boundary $\partial_P \Omega = \left( B(0, R) \times \{ t= 0 \} \right) \cup \left( \{ |x| = R\} \times [0, T] \right)$.

\item Function spaces for $\Omega$ being a bounded open set, \begin{eqnarray*}
W^{(1, 0), p} (\Omega) &=& \{ u \in L^p(\Omega ) : u_{x_i} \in L^p(\Omega) \}, \\
W^{(2, 1), p} (\Omega) &=& \{ u \in W^{(1, 0), p} (\Omega ) : u_{x_i x_j} \in L^p(\Omega) \},\\
C^{2, 1}(\overline \Omega) &=& \{ u \in C (\overline \Omega):  u_t, u_{x_i x_j} \in C (\overline \Omega)\},\\
C^{0+\alpha, 0+\frac{\alpha}{2}} (\overline \Omega) &=& \{u \in C( \overline \Omega ): \sup_{(x, t), (y, s) \in \Omega, (x, t) \neq (y, s)}\frac{|u(x, t) - u(y, s)|}{(|x-y|^2 + |t-s|)^{\alpha/2}} < \infty \},\\
C^{2+\alpha, 1+\frac{\alpha}{2}} (\overline \Omega) &=& \{u \in C( \overline \Omega ): u_{x_i x_j}, u_t \in C^{0+\alpha, 0+\frac{\alpha}{2}}(\overline \Omega) \},\\
L^p_{loc} (\Omega) &=& \{ u|_{U} \in L^p(U) \,\,\,\forall \mbox{open } U \mbox{ such that } \overline U \subset \overline \Omega \setminus \partial_P \Omega \},\\
 W^{(1, 0), p}_{loc} (\Omega) &=& \{ u \in L^p_{loc} (\Omega): u \in W^{(1, 0), p} (U) \,\, \forall \mbox{open } U \mbox{ such that } \overline U \subset \overline \Omega \setminus \partial_P \Omega \},\\
W^{(2, 1), p}_{loc} (\Omega) &=& \{ u \in L^p_{loc} (\Omega): u \in W^{(2, 1), p} (U) \,\, \forall \mbox{open } U \mbox{ such that } \overline U \subset \overline \Omega \setminus \partial_P \Omega \}.
\end{eqnarray*}

\end{itemize}

\section{Dynamic Programming Principle and Some Preliminary Results}

\subsection{Dynamic Programming Principle}

\begin{theorem}({\bf Dynamic Programming Principle}) \label{DPP}
Under Assumptions 1-7, for $t_0 \in [0, T]$, $x_0 \in \mathbb{R}^n$, let $\tau$ be a stopping time on $(\Omega_{t_0, T}, \{{\cal F}_{t_0,t}\}_t)$, we have
\begin{eqnarray}\label{DPP1}
V (t_0, x_0) &= &\inf_{(\tau_i, \xi_i) \in {\cal V}[t_0, T]}\mathbb{E} \left[ \int_{t_0}^{\tau \wedge T} f(s, X^{t_0, x_0,\tau_i, \xi_i }_s) ds\right]\nonumber\\
 &&+ \mathbb{E} \left[\sum_i B(\xi_i, \tau_i) 1_{\tau_i \leq \tau \wedge T}+ V (\tau \wedge T, X^{t_0, x_0, \tau_i, \xi_i }_{\tau \wedge T})\right].
\end{eqnarray}
\end{theorem}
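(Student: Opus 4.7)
The plan is to prove the DPP by establishing the two inequalities $V(t_0, x_0) \leq \mbox{RHS}$ and $V(t_0, x_0) \geq \mbox{RHS}$ separately. Both rest on two pieces of machinery: an a priori continuity and polynomial growth estimate for $V$ (the extension of the Tang--Yong \cite{TY93} estimates to the jump setting advertised in the introduction), and the regular conditional probability construction on the uncompleted path space $\Omega_{t_0, T}$ guaranteed by Lemma 4.3.3 of \cite{StroockVaradhan06}.

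For $V(t_0, x_0) \leq \mbox{RHS}$, I would fix $\epsilon > 0$ and an arbitrary $(\tau_i, \xi_i) \in \mathcal{V}[t_0, T]$, and splice controls: use $(\tau_i, \xi_i)$ on $[t_0, \tau \wedge T]$, then switch after $\tau \wedge T$ to an $\epsilon$-optimal admissible control $(\hat\tau_i^{y, s}, \hat\xi_i^{y, s})$ for the problem started at $(y, s) = (X_{\tau \wedge T}, \tau \wedge T)$. The delicate point is the measurable selection: using continuity of $V$, cover $\mathbb{R}^n \times [0, T]$ by a countable family of small rectangles on each of which one can choose a single control that is $\epsilon$-optimal up to the modulus of continuity of $V$; piecing these together produces a $(y, s)$-measurable family $(\hat\tau_i^{y,s}, \hat\xi_i^{y,s})$. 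The concatenated strategy is adapted to $\{\mathcal{F}^{W,N}_{t_0, t}\}$ by construction and hence lies in $\mathcal{V}[t_0, T]$; its cost is bounded by the bracketed RHS expression plus $\epsilon$. Taking infimum over $(\tau_i, \xi_i)$ and $\epsilon \downarrow 0$ yields the inequality.

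For $V(t_0, x_0) \geq \mbox{RHS}$, fix $\epsilon > 0$ and an $\epsilon$-optimal $(\tau_i, \xi_i)$ for $V(t_0, x_0)$. Decompose $J[x_0, t_0, \tau_i, \xi_i]$ into the running plus impulse cost accrued on $[t_0, \tau \wedge T]$ and the residual cost on $[\tau \wedge T, T]$ (which includes the terminal $g(X_T)$). Condition the residual on $\mathcal{F}_{t_0, \tau \wedge T}$ via the regular conditional probability $\mathbb{P}_\omega$. On a set of full $\mathbb{P}$-measure one then verifies that under $\mathbb{P}_\omega$ the shifted Brownian motion $W_{\cdot \vee (\tau \wedge T)} - W_{\tau \wedge T}$ and the restriction of $N$ to $(\tau \wedge T, T] \times (\mathbb{R}^k \setminus \{0\})$ form an independent Brownian motion and Poisson point process on $[\tau(\omega) \wedge T, T]$, that the trajectory of $X$ after $\tau \wedge T$ still solves (\ref{LevySDE}) started from $(X_{\tau \wedge T}(\omega), \tau(\omega) \wedge T)$ with the shifted driving noise, and that the sub-sequence of $(\tau_i, \xi_i)$ with $\tau_i > \tau \wedge T$ is admissible in $\mathcal{V}[\tau(\omega) \wedge T, T]$ under the shifted filtration. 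By the definition of $V$, the conditional expectation of the residual cost is at least $V(\tau(\omega) \wedge T, X_{\tau \wedge T}(\omega))$. Integrating $\omega$ out, $V(t_0, x_0) + \epsilon \geq J \geq \mbox{RHS}$; send $\epsilon \downarrow 0$.

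The main obstacle is the $\geq$ direction, and within it the pathwise identification step above: one must show that $\mathbb{P}_\omega$ canonically disintegrates the driving noise into shifted Brownian/Poisson increments and that the restricted control remains admissible for the shifted filtration. This is precisely where the uncompleted natural filtration of Assumption \ref{assumpSetup} is indispensable, since under the usual augmented filtration the regular conditional probabilities need not exist in the canonical product form of Lemma 4.3.3 of \cite{StroockVaradhan06} and the identification breaks. The measurable selection used in the $\leq$ direction is a secondary, but genuine, difficulty and is handled through continuity of $V$ together with a countable-cover argument, avoiding the heavier Jankov--von Neumann machinery.
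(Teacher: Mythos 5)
Your proposal is correct and follows essentially the same route as the paper: the conditioning direction is handled exactly as in the paper's Proposition \ref{preDPP} (regular conditional probabilities on the uncompleted canonical space, preservation of martingales and stochastic integrals under $(\mathbb{P}|{\cal F}_{t_0,\tau})$, Lemma 1.3.3 of \cite{StroockVaradhan06}), and the other direction is handled by the same countable partition of $\mathbb{R}^n\times[t_0,T)$ into small rectangles with near-optimal controls selected via the H\"older continuity of $V$. The only step you compress is the quantitative error control for the gap between $\tau$ and the discretized switching time $\hat\tau$ (the moment estimates on $|X_{\hat\tau}-X_\tau|$ combined with the growth and H\"older bounds of Lemma \ref{Holder}), which the paper carries out explicitly before letting $\epsilon\to 0$.
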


In order to establish the DPP, the first key issue is: given a stopping time $\tau$, how the martingale property and the stochastic integral change under the regular conditional probability distribution $(\mathbb{P}|{\cal F}_\tau)$.
The next key issue is the continuity of the value function, which will ensure that a countable selection is adequate without the abstract measurable selection theorem. (See \cite{Evstigneev76}).

To start, let us first introduce a new function that connects two Brownian paths which start from the origin at different times into a single Brownian path. This function also combines two Poisson measures on different intervals into a single Poisson measure.

\begin{definition}
For each $t \in [t_0, T]$, define a map $\Pi^t = (\Pi^t_1, \Pi^t_2): C[t_0, T] \times M[t_0, T] \to C[t_0, t] \times M[t_0, t] \times C[t, T] \times M[t, T]$ such that
\begin{eqnarray*}
\Pi_1^t(x_\cdot, n) &= &(x|_{[t_0, t]}, n|_{[t_0, t] \times \mathbb{R}^k  \setminus \{0\}}),\\
\Pi_2^t(x_\cdot, n) &= &(x|_{[t, T]}- x_t, n|_{(t, T]\times \mathbb{R}^k  \setminus \{0\}}).
\end{eqnarray*}
\end{definition}
Note that this is an ${\cal F}_{t_0, T}[t_0, T] / {\cal F}_{t_0, t}[t_0, t] \otimes {\cal F}_{t, T}[t, T]$-measurable bijection. Therefore, for fixed $(y_\cdot, m) \in C[t_0, T] \times M[t_0, T]$, the map from $C[t, T] \times M[t, T] \to C[t_0, T] \times M[t_0, T]$ defined by
\begin{eqnarray*}
(x_\cdot, n) &\mapsto &(\Pi^t)^{-1} (\Pi^t_1(y_\cdot, m), \Pi^t_2 (x_\cdot, n))\\
&& = (x_{\cdot \vee t} - x_t + y_{\cdot \wedge t}, m|_{[t_0, t] \times \mathbb{R}^k  \setminus \{0\}} + n|_{(t, T]\times \mathbb{R}^k  \setminus \{0\}})
\end{eqnarray*}
is ${\cal F}_{t, s}[t, T] / {\cal F}_{t_0, s} [t_0, T]$-measurable for each $s \in [t , T]$.

Next, we need two technical lemmas regarding $(\mathbb{P}|{\cal F}_\tau)$. Specifically, the first lemma states that  the local martingale property is  preserved, and the second one ensures that the stochastic integration is well defined under $(\mathbb{P}|{\cal F}_\tau)$.

According to Theorem 1.2.10 of \cite{StroockVaradhan06},
\begin{lemma}\label{stoppingMG}
 Given a filtered space, $(\Omega, {\cal F}, \{{\cal F}_t\}_{0 \leq t \leq T}, \mathbb{P})$, and an associated  martingale $\{ M_t\}_{ 0 \leq t \leq T}$. Let $\tau$ be an ${\cal F}$-stopping time. Assume $(\mathbb{P}|{\cal F}_\tau)$ exists. Then, for $\mathbb{P}$-a.e. $\omega \in \Omega$, $N_t = M_t - M_{t \wedge \tau}$ is a local martingale under $(\mathbb{P}|{\cal F}_\tau)(\omega, \cdot)$.
\end{lemma}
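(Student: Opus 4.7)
The plan is to reduce the claim to Theorem 1.2.10 of \cite{StroockVaradhan06}, which asserts that the regular conditional probability preserves the local martingale property for stopped-difference processes of the form $\tilde M_t - \tilde M_{t \wedge \tau}$. So the real content is to set up the right martingale on the original space and then invoke that theorem.

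First I would verify on $(\Omega, {\cal F}, \{{\cal F}_t\}, \mathbb{P})$ that $N_t := M_t - M_{t \wedge \tau}$ is itself a martingale: $M^\tau$ is a martingale by Doob's optional stopping, hence so is its difference with $M$. A key pointwise identity is $N_s = N_{s \vee \tau}$ (zero on $\{s < \tau\}$, equal to $M_s - M_\tau$ on $\{s \geq \tau\}$). Combined with ${\cal F}_s \vee {\cal F}_\tau = {\cal F}_{s \vee \tau}$, this upgrades the ordinary martingale property to $\mathbb{E}[N_t \mid {\cal F}_s \vee {\cal F}_\tau] = N_s$ for $s \leq t$, which is the form that will transfer cleanly through conditioning on ${\cal F}_\tau$.

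Writing $Q_\omega := (\mathbb{P}|{\cal F}_\tau)(\omega, \cdot)$, the defining property of the regular conditional probability gives, for every $A \in {\cal F}_s$ and $B \in {\cal F}_\tau$,
$$\int_B \mathbb{E}^{Q_\omega}\bigl[(N_t - N_s) 1_A\bigr] \, d\mathbb{P}(\omega) = \mathbb{E}\bigl[(N_t - N_s) 1_A 1_B\bigr] = 0,$$
so $\mathbb{E}^{Q_\omega}[(N_t - N_s) 1_A] = 0$ for $\mathbb{P}$-a.e.\ $\omega$. To collapse the $(s, t, A)$-dependent null sets into a single null set, I would work along rational $s < t$ and along a countable algebra generating ${\cal F}_s$ (available in the canonical setup of Assumption \ref{assumpSetup} as finite-dimensional cylinder sets of the Brownian and Poisson coordinates), then extend to general $s, t, A$ by a standard monotone-class/right-continuity argument.

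The real work, and what I expect to be the main obstacle, is integrability under $Q_\omega$: even $M_t \in L^1(\mathbb{P})$ does not force $N_t \in L^1(Q_\omega)$ for every $\omega$, which is precisely why the lemma claims only a \emph{local} martingale property. I would introduce the standard localizing sequence $\sigma_n := \inf\{t \geq t_0 : |N_t| \geq n\} \wedge T$, each of which is an $\{{\cal F}_t\}$-stopping time, and run the preceding argument on the bounded (hence $Q_\omega$-integrable) stopped process $N^{\sigma_n}$, thereby identifying $\sigma_n$ as a localizing sequence for $N$ under $Q_\omega$ for $\mathbb{P}$-a.e.\ $\omega$. This measure-theoretic bookkeeping across countably many times, test sets, and localizing levels is exactly what Theorem 1.2.10 of \cite{StroockVaradhan06} packages, so in the end I would simply invoke it rather than re-derive it.
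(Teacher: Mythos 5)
Your proposal is correct and takes essentially the same route as the paper: the paper offers no proof of this lemma beyond citing Theorem 1.2.10 of Stroock and Varadhan, which is precisely what you invoke at the end. Your supporting sketch (the strengthened martingale property with respect to ${\cal F}_s \vee {\cal F}_\tau$, collapsing the null sets over a countable generating class, and localizing to handle integrability under $(\mathbb{P}|{\cal F}_\tau)(\omega,\cdot)$) is a sound outline of what that cited theorem packages.
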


\begin{lemma}\label{intpreserv}
Given a filtered space  $(\Omega, {\cal F}, \{{\cal F}_t\}_{0 \leq t \leq T}, \mathbb{P})$, a stopping time $\tau$, a  previsible process $H: (0, T] \times \Omega \to \mathbb{R}^n$,  a local martingale $M: [0, T] \times \Omega \to \mathbb{R}^n$ such that
\begin{eqnarray*}
\int_\tau^T |H_s|^2 d [M]_s < \infty
\end{eqnarray*}
$\mathbb{P}$-almost surely, and $N_t = \int_\tau^t H_s dM_s$ (a version of the stochastic integral that is right-continuous on all paths). Assume that $(\mathbb{P}|{\cal F}_\tau)$ exists. Then, for $\mathbb{P}$-a.e. $\omega \in \Omega$, $N_t$ is also the stochastic integral $ \int_\tau^t H_s dM_s$ under the new probability measure $(\mathbb{P}|{\cal G})(\omega, \cdot)$.
\end{lemma}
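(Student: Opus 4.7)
The plan is to reduce the identification to elementary (simple) integrands, for which the stochastic integral is a pathwise finite sum whose value under both $\mathbb{P}$ and $(\mathbb{P}|{\cal F}_\tau)(\omega,\cdot)$ is tautologically the same, and then to transfer the $L^2$-approximation from $\mathbb{P}$ to the regular conditional probability via Fubini's disintegration. First I would set $\widetilde M_t := M_t - M_{t\wedge\tau}$, which by Lemma \ref{stoppingMG} is a local martingale under $(\mathbb{P}|{\cal F}_\tau)(\omega,\cdot)$ for $\mathbb{P}$-a.e.\ $\omega$, and observe that $\int_\tau^t H_s\,dM_s = \int_0^t H_s 1_{\{s>\tau\}}\,d\widetilde M_s$. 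Next, choose localizing stopping times $\sigma_k \uparrow T$ so that $\widetilde M^{\sigma_k}$ is a square-integrable martingale and $\mathbb{E}\int_0^T |H_s|^2 1_{(\tau,\sigma_k]}(s)\,d[\widetilde M]_s < \infty$, and approximate $H^{(k)} := H\cdot 1_{(\tau,\sigma_k]}$ in $L^2([0,T]\times\Omega,\,d[\widetilde M]\otimes d\mathbb{P})$ by elementary previsible processes $H^{(k,j)} = \sum_i h_i^{(k,j)} 1_{(s_i,s_{i+1}]}$ with deterministic $s_i$ and bounded ${\cal F}_{s_i}$-measurable coefficients.

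For each such elementary integrand the stochastic integral is literally the finite sum $\sum_i h_i^{(k,j)}(\widetilde M_{s_{i+1}}-\widetilde M_{s_i})$, the very same random variable under either measure. Under $\mathbb{P}$ these sums converge in $L^2$ to $N_{\sigma_k}$ by construction. The crucial step is the disintegration identity
\begin{equation*}
\mathbb{E}\bigl\|H^{(k,j)}-H^{(k)}\bigr\|^2_{L^2(d[\widetilde M]\otimes d\mathbb{P})} = \int \bigl\|H^{(k,j)}-H^{(k)}\bigr\|^2_{L^2(d[\widetilde M]\otimes d(\mathbb{P}|{\cal F}_\tau)(\omega,\cdot))}\,d\mathbb{P}(\omega),
\end{equation*}
whose left-hand side tends to $0$. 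A diagonal subsequence extraction then yields that the inner norm tends to $0$ for $\mathbb{P}$-a.e.\ $\omega$ and for all $k$ simultaneously. Consequently, under $(\mathbb{P}|{\cal F}_\tau)(\omega,\cdot)$ the same finite sums converge in $L^2$ to the stochastic integral $\int_\tau^{\cdot\wedge\sigma_k} H\,dM$ computed with respect to this conditional measure. Matching the $L^2$- and almost-sure limits identifies $N_{\sigma_k}$ with that conditional-measure stochastic integral, and sending $k\to\infty$ together with the right-continuity of $N$ on all paths gives the desired identity on $[\tau,T]$.

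The main obstacle I expect is the bookkeeping around the fact that stochastic integrals are defined only up to indistinguishability: a pre-selected RCLL version $N$ of $\int_\tau^\cdot H\,dM$ under $\mathbb{P}$ must also serve as a version under $(\mathbb{P}|{\cal F}_\tau)(\omega,\cdot)$ for $\mathbb{P}$-a.e.\ $\omega$, and a single $\mathbb{P}$-null exceptional set has to dominate the countable family of elementary approximants across all localizing levels $k$. The Fubini/disintegration identity together with diagonal subsequence extraction is precisely what supplies such a uniform null set, turning the almost-sure convergence of the simple-integrand sums on both sides into pathwise equality of the limits.
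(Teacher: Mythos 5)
Your proposal is correct and follows essentially the same route as the paper: reduce to elementary integrands (whose integrals are literally the same finite sums under either measure), transfer the $L^2$ approximation from $\mathbb{P}$ to $(\mathbb{P}|{\cal F}_\tau)(\omega,\cdot)$ by the Fubini/disintegration identity plus a subsequence extraction, and finish by localization. The only step the paper spells out that you elide is the verification that the quadratic variation of $\widetilde M = M - M^\tau$ under the conditional measure coincides with $[M]_\cdot - [M]_{\cdot\wedge\tau}$ (obtained by applying Lemma \ref{stoppingMG} to $\widetilde M^2 - [\widetilde M]$), which is what lets you identify the conditional-measure $L^2$ limit of your elementary sums as the stochastic integral under that measure.
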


The proof is elementary and is listed in  the Appendix for completeness.

\smallskip
Now, we establish the first step of the Dynamic Programming Principle.

\begin{proposition}\label{preDPP}
Let $\tau$ be a stopping time defined on some setup $(\Omega, \{{\cal F}_{t_0,s}\})$. For any impulse control $(\tau_i, \xi_i) \in {\cal V}[t_0, T]$,
\begin{eqnarray}\label{preDPPeq}
J[t_0, x_0,\tau_i, \xi_i] &=& \mathbb{E} \left[ \int_{t_0}^{\tau \wedge T} f(s, X^{t_0, x_0, \tau_i, \xi_i}_s) ds + \sum_i B(\xi_i, \tau_i) 1_{\tau_i < \tau \wedge T}\right]\nonumber\\
&& + \mathbb{E}\left[J[\tau \wedge T, X^{t_0, x_0, \tau^\omega_i, \xi^\omega_i}_{\tau \wedge T -}, \tau^\omega_i, \xi^\omega_i ] \right].
\end{eqnarray}
 Here $\tau^\omega_i, \xi^\omega_i$ are defined as follows. For $t \in [t_0, T]$, for each $(y_\cdot, m) \in C[t_0, T] \times M ([t_0, T] \times \mathbb{R}^k \setminus \{0\})$,
\begin{eqnarray*}
\tau_i^{y_\cdot, m} (x_\cdot, n) = &\tau_i ( (\Pi^t)^{-1} (\Pi^t_1(y_\cdot, n), \Pi^t_2 (x_\cdot, n))),\\
\xi_i^{y_\cdot, m} (x_\cdot, n) = &\xi_i ( (\Pi^t)^{-1} (\Pi^t_1(y_\cdot, n), \Pi^t_2 (x_\cdot, n))).
\end{eqnarray*}
And for each $\omega$,
 \begin{eqnarray*}
\tau_i^\omega = \tau_i^{\tau(\omega), W_\cdot (\omega), N(\omega)},\\
\xi_i^\omega = \xi_i^{\tau(\omega), W_\cdot (\omega), N(\omega)}.
\end{eqnarray*}
\end{proposition}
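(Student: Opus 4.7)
The plan is to split the cost functional at $\tau\wedge T$, apply the tower property $\mathbb{E}[\cdot]=\mathbb{E}[\mathbb{E}[\cdot\mid{\cal F}_\tau]]$ using the regular conditional probability $(\mathbb{P}\mid{\cal F}_\tau)$---which exists thanks to Assumption \ref{assumpSetup} and Lemma 4.3.3 of \cite{StroockVaradhan06}---and then recognize the conditional expectation of the post-$\tau$ portion as the cost functional $J$ of a shifted control problem. Concretely, I would first decompose
\begin{eqnarray*}
J[t_0,x_0,\tau_i,\xi_i] &=& \mathbb{E}\Bigl[\int_{t_0}^{\tau\wedge T} f(s,X_s)\,ds + \sum_i B(\xi_i,\tau_i)1_{\tau_i<\tau\wedge T}\Bigr]\\
&&+ \mathbb{E}\Bigl[\int_{\tau\wedge T}^T f(s,X_s)\,ds + g(X_T) + \sum_i B(\xi_i,\tau_i)1_{\tau\wedge T\leq\tau_i\leq T}\Bigr].
\end{eqnarray*}
The first bracket is already in the form required by (\ref{preDPPeq}), so the proposition reduces to showing that the ${\cal F}_\tau$-conditional expectation of the second bracket equals $J[\tau\wedge T,X_{\tau\wedge T-},\tau_i^\omega,\xi_i^\omega]$. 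The strict inequality $1_{\tau_i<\tau\wedge T}$ combined with the left-limit state $X_{\tau\wedge T-}$ is precisely the device that sends any impulse occurring at time $\tau$ into the tail problem and prevents double counting.

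The technical heart is to describe the tail of the system under $(\mathbb{P}\mid{\cal F}_\tau)(\omega,\cdot)$. Applying Lemma \ref{stoppingMG} to the martingales $W_t$ and $\widetilde{N}(A,(t_0,t])$ for Borel $A\subset\mathbb{R}^k\setminus\{0\}$, together with L\'evy's and Watanabe's characterizations, gives that for $\mathbb{P}$-a.e.\ $\omega$ the shifted processes $W^\omega_s:=W_s-W_{\tau\wedge T(\omega)}$ and $N^\omega:=N|_{(\tau\wedge T(\omega),T]\times\mathbb{R}^k\setminus\{0\}}$ are a Brownian motion and an independent Poisson random measure with intensity $ds\otimes\rho(dz)$ under $(\mathbb{P}\mid{\cal F}_\tau)(\omega,\cdot)$. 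Lemma \ref{intpreserv} then ensures that the stochastic integrals $\int_{\tau\wedge T}^{\cdot}\sigma\,dW$ and $\int_{\tau\wedge T}^{\cdot}\!\int j_2\,d\widetilde{N}$ coincide with the corresponding integrals against $W^\omega$ and $\widetilde{N}^\omega$ under the new measure. Consequently $(X_s)_{\tau\wedge T(\omega)\leq s\leq T}$ solves (\ref{LevySDE}) starting from $X_{\tau\wedge T-}(\omega)$ at time $\tau\wedge T(\omega)$, driven by $(W^\omega,N^\omega)$ and controlled by $(\tau_i^\omega,\xi_i^\omega)$. Weak uniqueness of (\ref{LevySDE}) under the Lipschitz/growth Assumptions \ref{assumpLip}--\ref{assumpGrowth} then identifies its law with that of the canonical controlled jump diffusion $X^{\tau\wedge T(\omega),X_{\tau\wedge T-}(\omega),\tau_i^\omega,\xi_i^\omega}$, so the conditional expectation above equals $J[\tau\wedge T(\omega),X_{\tau\wedge T-}(\omega),\tau_i^\omega,\xi_i^\omega]$; taking outer expectation finishes the proof.

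The main obstacle is measurability bookkeeping rather than any deep probabilistic content. One must verify that the shifted pair $(\tau_i^\omega,\xi_i^\omega)$ really lies in ${\cal V}[\tau\wedge T(\omega),T]$ on the shifted canonical space (i.e., $\tau_i^\omega$ remains a stopping time and $\xi_i^\omega$ is measurable with respect to the appropriate $\sigma$-algebra of the shifted filtration) and that the map $\omega\mapsto J[\tau\wedge T(\omega),X_{\tau\wedge T-}(\omega),\tau_i^\omega,\xi_i^\omega]$ is ${\cal F}_\tau$-measurable so that the outer expectation is meaningful. Both hinge on the bijective, bimeasurable nature of the path-concatenation map $\Pi^t$ and---critically---on working with the \emph{uncompleted} natural filtration from Assumption \ref{assumpSetup}, since regular conditional probabilities only respect uncompleted $\sigma$-algebras. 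This is exactly why the authors insist on the Stroock--Varadhan canonical setup in place of the usual completed right-continuous filtration.
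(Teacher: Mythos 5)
Your overall architecture coincides with the paper's: split the cost at $\tau\wedge T$, condition on ${\cal F}_{t_0,\tau}$ via the regular conditional probability (which exists because the canonical space of Assumption \ref{assumpSetup} is Polish), use Lemma \ref{intpreserv} to carry the stochastic integrals over to the conditional measure, and identify the conditional law of the tail with that of a freshly started controlled jump diffusion. Your extra care in checking, via L\'evy's and Watanabe's characterizations together with Lemma \ref{stoppingMG}, that the shifted noise is again a Brownian motion and an independent Poisson measure under $(\mathbb{P}\mid{\cal F}_\tau)(\omega,\cdot)$ is a point the paper leaves implicit, and your remarks on the uncompleted filtration are exactly the paper's rationale.

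There is, however, one substantive step you assert rather than prove, and it is the step on which the whole identification turns: why, under $(\mathbb{P}\mid{\cal F}_{t_0,\tau})(\omega,\cdot)$, the tail of $X$ is controlled by the \emph{shifted} controls $(\tau_i^\omega,\xi_i^\omega)$ rather than by the original $(\tau_i,\xi_i)$, which remain functions of the entire path. The paper resolves this by first observing that ${\cal F}_{t_0,\tau}$ is countably generated (again because the space is Polish) and then invoking Lemma 1.3.3 of Stroock--Varadhan: outside a $\mathbb{P}$-null set $N_0$, the conditional measure $(\mathbb{P}\mid{\cal F}_{t_0,\tau})(\omega,\cdot)$ is concentrated on the set of paths whose restriction to $[t_0,\tau(\omega)]$ agrees with that of $\omega$. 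On that event the concatenation identity $(x_\cdot,n)=(\Pi^{\tau(\omega)})^{-1}\bigl(\Pi^{\tau(\omega)}_1(\omega),\Pi^{\tau(\omega)}_2(x_\cdot,n)\bigr)$ holds, and hence $\tau_i=\tau_i^\omega$ and $\xi_i=\xi_i^\omega$ almost surely under the conditional measure. Without this concentration property your phrase ``controlled by $(\tau_i^\omega,\xi_i^\omega)$'' is unjustified, and the tail cost cannot be recognized as $J[\tau\wedge T,X_{\tau\wedge T-},\tau_i^\omega,\xi_i^\omega]$. Your measurability discussion of the bijectivity of $\Pi^t$ is adjacent to, but does not substitute for, this almost-sure identification; you should add it explicitly to close the argument.
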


\begin{proof}
Consider $(\mathbb{P}|{\cal F}_{t_0, \tau})$ on $(\Omega_{t_0, t}, \{{\cal F}_{t_0, t}\} )$. Since we are working with canonical spaces, the sample space is in fact a Polish space (see \cite{Kallenberg02} Theorem A2.1 and A2.3), and the regular conditional probability exists by Theorem 6.3 of \cite{Kallenberg02}. Since Polish spaces are completely separable metric spaces and have countably generated $\sigma$-algebra, ${\cal F}_{t_0, \tau}$ is countably generated. By Lemma 1.3.3 from Stroock \& Varadhan \cite{StroockVaradhan06}, there exists some null set $N_0$ such that if $(x_\cdot, n) \notin N_0$, then
\begin{eqnarray*}
(\mathbb{P}|{\cal F}_{t_0, \tau}) ((x_\cdot, n), \{(y_\cdot, n) : \Pi_1^{\tau(x_\cdot, n)}(y_\cdot, n)= \Pi_1^{\tau(x_\cdot, n)}(x_\cdot, n) \}) = 1.
\end{eqnarray*}
Therefore, for $\omega = (x_\cdot ,n) \notin N_0$, $\tau_i = \tau_i^\omega$, and $\xi_i = \xi_i^\omega$ almost surely.

Moreover, by Lemma \ref{intpreserv}, the stochastic integrals are preserved. Therefore, for $\omega \notin N_0$, the solution to Eq. (\ref{LevySDE}) remains a solution to the same equation on the interval $[\tau(\omega), T]$ with $(\tau_i^\omega, \xi_i^\omega) \in {\cal V}[\tau(\omega), T]$. So $X^{t_0, x_0, \tau_i, \xi_i}$ on the interval $[\tau(\omega), T]$ has the same distribution as $X^{\tau(\omega), y, \tau_i^\omega, \xi_i^\omega}$ for $y = X^{t_0, x_0, u_\cdot, \tau_i, \xi_i}_{\tau(\omega)}(\omega)$ under $(\mathbb{P}|{\cal F}_{t_0, \tau})(\omega, \cdot)$ for $\omega \notin N_0$.
\end{proof}

Now, to obtain the Dynamic Programming Principle, one needs to take the infimum on both sides of Eq. (\ref{preDPPeq}). The part of ``$\leq$'' is immediate, but the opposite direction is more delicate. At the stopping time $\tau$, for each $\omega$, one needs to choose a good control so that the cost $J$ is close to the optimal $V$. To do this,  one needs to show that the functional $J$ is continuous in some sense, and therefore  a countable selection is adequate.

The following result, the H$\ddot o$lder continuity of the value function, is essentially Theorem 3.1 of Tang \& Yong \cite{TY93}. The major difference is that their work is for diffusions without jumps, therefore some modification in terms of estimation and adaptedness are needed, as outlined in the proof.

\begin{lemma}\label{Holder}
There exists constant $C>0$ such that for all $t, \hat t \in [0, T]$, $x, \hat x \in \mathbb{R}^n$,
\begin{eqnarray*}
-C(T+1) &\leq &V(t, x) \leq C(1+|x|^{\gamma + \delta}),\\
|V(t, x) - V(\hat t, \hat x)| &\leq &C[(1 + |x|^\mu + |\hat x|^\mu)|t - \hat t|^{\delta/2} + (1 + |x|^\gamma + |\hat x|^\gamma)|x - \hat x|^{\delta} ].
\end{eqnarray*}
\end{lemma}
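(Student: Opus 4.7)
The plan is to adapt the argument of Tang \& Yong \cite{TY93} to the jump-diffusion impulse setting in four substeps. First, the pointwise bounds on $V$: the lower bound $V\ge -L(T+1)$ is immediate from Assumption \ref{assumpLower}, since $f,g\ge -L$ and each impulse cost satisfies $B(\xi,\tau)\ge L>0$, so non-negativity of the impulse sum and a pointwise estimate on the other terms suffice. For the upper bound I would compare $V$ against the trivial ``no impulse'' strategy, combining the growth estimate $|f(s,x)|+|g(x)|\le C(1+|x|^{\gamma+\delta})$ (from Assumption \ref{assumpHolder} applied at $\hat x=0$) with a standard moment bound $\mathbb{E}|X^{t,x,\emptyset}_s|^p\le C_p(1+|x|^p)$, which follows from It\^o's formula and Gronwall applied to the uncontrolled SDE under Assumptions \ref{assumpLip}--\ref{assumpGrowth}, yielding $V(x,t)\le C(1+|x|^{\gamma+\delta})$.

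The key preparatory step for the two continuity estimates is to obtain moments of the controlled trajectory uniformly along an $\varepsilon$-optimal control $(\tau_i^\varepsilon,\xi_i^\varepsilon)$. From the upper bound just established and Assumption \ref{assumpLower}, the total impulse mass satisfies
\[
\mathbb{E}\Bigl[\sum_i |\xi_i^\varepsilon|^\mu\, 1_{\tau_i^\varepsilon\le T}\Bigr] \le C(1+|x|^{\gamma+\delta}).
\]
Applying It\^o (with compensation for the $j_2$ integral) to $|X_s|^\mu$ and invoking the dominance $\nu\le\mu$ from Assumption \ref{assumpDom}, a Gronwall argument closes at the power $\mu$, giving $\sup_{s\in[t,T]}\mathbb{E}|X^{t,x,\tau_i^\varepsilon,\xi_i^\varepsilon}_s|^\mu\le C(1+|x|^\mu)$, and then by Jensen the same bound at every exponent $q\le\mu$ (in particular $q=\gamma+\delta$, which is permissible by $\gamma+\delta<\mu$).

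For the time H\"older estimate, fix $t<\hat t$ and pick an $\varepsilon$-optimal control for $(\hat t,x)$; lift it to $[t,T]$ by doing nothing on $[t,\hat t]$ and call the resulting path $Y_s$. Then $V(t,x)-V(\hat t,x)-\varepsilon$ is bounded by $\mathbb{E}\int_t^{\hat t} f(s,Y_s)\,ds$ (of order $C(1+|x|^{\gamma+\delta})(\hat t-t)$) plus the difference in the cost from $\hat t$ onward starting at $Y_{\hat t}$ versus $x$. The second piece is controlled by Assumption \ref{assumpHolder} together with the flow estimate $\mathbb{E}|Y_{\hat t}-x|^2\le C(\hat t-t)(1+|x|^\nu)$, which after Jensen at exponent $\delta$ and $\nu\le\mu$ delivers the required $C(1+|x|^\mu)|\hat t-t|^{\delta/2}$; the reverse bound is symmetric. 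The spatial H\"older estimate is handled by applying the \emph{same} $\varepsilon$-optimal control to two initial points $x,\hat x$: a Gronwall estimate built from the Lipschitz bounds in Assumption \ref{assumpLip} on $b,\sigma,j_1,j_2$ gives $\mathbb{E}|X^{t,x,\cdot}_s-X^{t,\hat x,\cdot}_s|^2\le C|x-\hat x|^2$, which is unaffected by the common impulses, and Assumption \ref{assumpHolder} plus the $\gamma$-moment from Step two then yields $|V(t,x)-V(t,\hat x)|\le C(1+|x|^\gamma+|\hat x|^\gamma)|x-\hat x|^\delta$.

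The main technical obstacle is the preparatory moment estimate: because the impulse lower bound only controls $\mu$-moments of the jumps $\xi_i$, the Gronwall argument must be run at the relatively low power $\mu$, with the compensated $j_2$ integral handled via Burkholder--Davis--Gundy and the finite-variation $j_1$ integral handled by its $\mu$-integrability from Assumption \ref{assumpGrowth}. The dominance relations $\nu\le\mu$ and $\gamma+\delta<\mu$ from Assumption \ref{assumpDom} are exactly what make this self-consistent with the $(\gamma+\delta)$-growth bound on $V$ and with the required exponents in the final statement.
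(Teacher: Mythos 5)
Your overall route is exactly the paper's: the authors also prove this lemma by adapting Theorem 3.1 of Tang \& Yong to the jump setting, and their written proof consists only of the two modifications they consider non-routine, namely (i) the moment inequalities for the $N$- and $\widetilde N$-integrals (your BDG/finite-variation treatment matches their stated bounds $\mathbb{E}|\int\!\!\int j_1\,N|^\beta\le\mathbb{E}(\int\!\!\int|j_1|\,\rho(dz)\,ds)^\beta$ and $\mathbb{E}|\int\!\!\int j_2\,\widetilde N|^\beta\le\mathbb{E}(\int\!\!\int|j_2|^2\rho(dz)\,ds)^{\beta/2}$), and (ii) an adaptedness correction. Your reconstruction of the Tang--Yong skeleton --- the $\mu$-moment bound along $\varepsilon$-optimal controls via $B(\xi,t)\ge L+C|\xi|^\mu$, the exponent bookkeeping through $\nu\le\mu$ and $\gamma+\delta<\mu$, and the spatial estimate via a common control --- is sound and in fact more explicit than what the paper writes.

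The one genuine gap is your sentence ``the reverse bound is symmetric.'' It is not, and this is precisely modification (ii) that the paper's proof is devoted to. For $t<\hat t$, lifting an $\varepsilon$-optimal control for $(\hat t,x)$ back to $[t,T]$ by doing nothing on $[t,\hat t]$ is unproblematic (composition with $\Pi_2^{\hat t}$ keeps it adapted to the larger filtration). But the opposite direction requires transporting an $\varepsilon$-optimal control $(\tau_i,\xi_i)\in{\cal V}[t,T]$ into a control on $[\hat t,T]$, and that object is adapted to $\{{\cal F}^{W,N}_{t,s}\}_{\hat t\le s\le T}$ rather than $\{{\cal F}^{W,N}_{\hat t,s}\}_{\hat t\le s\le T}$: it still ``sees'' the increments of $W$ and the atoms of $N$ on $[t,\hat t]$, so it is not admissible in ${\cal V}[\hat t,T]$. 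The paper repairs this by freezing the initial segment at a fixed sample point $\omega$, i.e.\ setting $\bar\xi^\omega(\cdot)=\bar\xi((\Pi^{\hat t})^{-1}(\Pi_1^{\hat t}(\omega),\Pi_2^{\hat t}(\cdot)))$, which \emph{is} admissible in ${\cal V}[\hat t,T]$ for each $\omega$, and then estimating $V(\hat t,x)\le\mathbb{E}_\omega\bigl[J[\hat t,x,\bar\xi^\omega]\bigr]$ instead of $J$ evaluated at the untransported control. Without this (or an equivalent device) the comparison in that direction is not between admissible strategies, and the inequality you need does not follow. Adding this step would close the argument.
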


\begin{proof}
To include the jump terms, it suffices to note the following inequalities,
\begin{eqnarray*}
\mathbb{E} \left| \int_{t_0}^t \int j_1(s, X_s, z)  N(dz, ds)\right|^\beta &\leq &\mathbb{E} \left(\int_{t_0}^t \int | j_1(s, X_s, z) | \rho(dz) ds \right)^{\beta},\\
\mathbb{E} \left| \int_{t_0}^t \int j_2(s, X_s, z) \widetilde{N}(dz, ds)\right|^\beta &\leq &\mathbb{E} \left(\int_{t_0}^t \int |j_2(s, X_s, z) |^2\rho(dz) ds \right)^{\beta/2}.
\end{eqnarray*}
Moreover, in our framework, $\bar \xi(\cdot)$ and $\hat \xi(\cdot)$ would not be in ${\cal V}[\hat t, T]$ because it is adapted to the filtration $\{{\cal F}^{W, N}_{t, s}\}_{\hat t \leq s \leq T}$ instead of $\{{\cal F}^{W, N}_{\hat t, s}\}_{\hat t \leq s \leq T}$. To fix this, consider for each $\omega \in \Omega_{t_0, T}$,
\begin{eqnarray*}
\bar \xi^\omega(\cdot) = &\bar \xi((\Pi^{\hat t})^{-1} (\Pi^{\hat t}_1 (\omega), \Pi^{\hat t}_2(\cdot))),\\
\hat \xi^\omega(\cdot) = &\hat \xi((\Pi^{\hat t})^{-1} (\Pi^{\hat t}_1 (\omega), \Pi^{\hat t}_2(\cdot))),\\
\end{eqnarray*}
 and consequently use $\mathbb{E} \left[ J[\hat t, x, \bar \xi^\omega]\right]$ instead of $J[\hat t, x, u_\cdot, \bar \xi]$.
\end{proof}

Given that the value function $V$ is continuous, we can prove Theorem \ref{DPP}.

\begin{proof}{(Dynamic Programming Principle)}
Without loss of generality, assume that $\tau \leq T$.
\begin{eqnarray*}
J[t_0, x_0, \tau_i, \xi_i] &= &\mathbb{E} \left[ \int_{t_0}^\tau f(s, X^{t_0, x_0}_s) ds + \sum_i B(\tau_i, \xi_i) 1_{\tau_i < \tau}\right]\\
&&+\mathbb{E}\left[J[\tau, X^{t_0, x_0, u^\omega_\cdot, \tau^\omega_i, \xi^\omega_i}_{\tau-}, u^{\omega}_\cdot, \tau^\omega_i, \xi^\omega_i ] \right] \\
&\geq &\mathbb{E} \left[ \int_{t_0}^\tau f(s, X^{t_0, x_0,\tau_i^\omega, \xi_i^\omega}_s) ds + \sum_i B(\tau_i, \xi_i) 1_{\tau_i < \tau}\right]\\
&& +\mathbb{E}\left[ V(\tau -, X^{t_0, x_0, \tau_i^\omega, \xi_i^\omega}_{\tau-})\right].
\end{eqnarray*}
Taking infimum on both sides, we get
\begin{eqnarray*}
V (t_0, x_0) \geq \inf_{(\tau_i, \xi_i) \in {\cal V}_{t_0}}\mathbb{E} \left[ \int_{t_0}^{\tau} f(s, X^{t_0, x_0, \tau_i^\omega, \xi_i^\omega}_s) ds +  \sum_i B(\tau_i, \xi_i) 1_{\tau_i \leq \tau} + V(\tau, X^{t_0, x_0, \tau_i^\omega, \xi_i^\omega}_{\tau})\right].
\end{eqnarray*}

Now we are to prove the reverse direction for the above inequality.

Fix $\epsilon > 0$. Divide $\mathbb{R}^n \times [t_0, T)$ into rectangles $\{R_j \times [s_j, t_j)\}$ disjoint up to boundaries, such that for any $x, \hat x \in R_j$ and $t, \hat t \in [s_j, t_j)$,
\begin{eqnarray*}
|V(x, t) - V(\hat x, \hat t)| < &\epsilon,\\
|t_j - s_j| < &\epsilon,\\
diam (R_j) < &\epsilon.
\end{eqnarray*}
For each $R_j$, pick $x_j \in R_j$. For each $(x_j, t_j)$, choose $u^j \in {\cal U}[t_j, T]$, $(\tau^j_k, \xi^j_k) \in {\cal V}[t_j, T]$, such that $V(x_j, t_j) + \epsilon > J[x_j, t_j, \tau^j_i, \xi^j_i]$. Let
\begin{eqnarray*}
A_j = \{ (X^{t_0, x_0, \tau_i, \xi_i}_{\tau^j}, \tau^j) \in R_j \times (s_j, t_j] \}
\end{eqnarray*}
Note that $\{A_j\}_j$ partitions the sample space $C[t_0, T] \times M[t_0, T]$. Define a new stopping time $\hat \tau$ by:
\begin{eqnarray*}
\hat \tau = t_j \ \  &\mbox{on } A_j.
\end{eqnarray*}
Note that $\hat \tau > \tau$ unless $\hat \tau = T$.

Define a new strategy $(\hat \tau_i, \xi_i) \in {\cal V}[t_0, T]$ by the following,

\begin{eqnarray*}
\sum_i \hat \xi_i 1_{\hat \tau_i \leq t} =
\left\{
\begin{array}{ll}
\sum_i \xi_i 1_{\tau_i \leq t} &\mbox{if } t \leq \tau,\\
\sum_i \xi_i 1_{\tau_i \leq \tau} + \sum_{i, j} 1_{A_j} (\xi^j_i 1_{\tau^j_i \leq t})(\Pi^{t_j}_2 (W, N)) &\mbox{if } t > \tau.
\end{array}
\right.
\end{eqnarray*}
In other word, once $\tau$ is reached, the impulse will be modified so that there would be no impulses on $[\tau, \hat \tau)$, and starting at $\hat \tau$, the impulse follows the rule $(\tau^j_i, \xi^j_i)$ on the set $A_j$. Now we have,

\begin{eqnarray*}
V(t_0, x_0) &\leq &J[t_0, x_0, \hat \tau_i, \hat \xi_i ] \\
&= &\mathbb{E} \left[ \int_{t_0}^{\hat \tau} f(s, X^{t_0, x_0, \hat \tau_i, \hat \xi_i}_s)) ds + \sum_i B(\hat \tau_i, \hat \xi_i) 1_{\hat \tau_i < \hat \tau} + J \left[ \hat \tau, X_{\hat \tau-}^{t_0, x_0, \hat \tau_i, \hat \xi_i}, \hat \tau_i^\omega, \hat \xi_i^\omega \right]\right] \\
&= &\mathbb{E} \left[ \int_{t_0}^{\tau} f(s, X^{t_0, x_0, \tau_i, \xi_i}_s)) ds \right] + \mathbb{E} \left[ \int_{\tau}^{\hat \tau} f(s, X^{t_0, x_0, \hat \tau_i, \hat \xi_i}_s)) ds \right] + \mathbb{E} \left[ \sum_i B(\tau_i, \xi_i) 1_{\tau_i < \tau} \right]\\
&&+ \mathbb{E} \left[  \sum_{j} J \left[ t_j, X_{t_j-}^{t_0, x_0, \hat \tau_i, \hat \xi_i}, \hat \xi_i^\omega \right] 1_{A_j} \right].
\end{eqnarray*}

The last equality follows from the fact that, $\hat \tau_i$ is either $< \tau$, or $\geq \hat \tau$, so $\hat \tau_i < \hat \tau$ implies that $\hat \tau_i = \tau_i < \tau$. Since $\hat u^\omega_\cdot = u_\cdot (\Pi^{-1} (\Pi_1^{\hat \tau}(W(\omega), N(\omega)), \Pi_2(\cdot))) = u^j_\cdot (\cdot)$ on the set $A_j$,

\begin{eqnarray*}
V(t_0, x_0) &\leq &\mathbb{E} \left[ \int_{t_0}^{\tau} f(s, X^{t_0, x_0, \tau_i, \xi_i}_s, u_s)) ds \right] + \mathbb{E} \left[ \int_{\tau}^{\hat \tau} f(s, X^{t_0, x_0, \hat \tau_i, \hat \xi_i}_s)) ds \right]\\
&& + \mathbb{E} \left[ \sum_i B(\tau_i, \xi_i) 1_{\tau_i \leq \tau} \right] + \mathbb{E} \left[  \sum_{j} J \left[ t_j, X_{t_j - }^{t_0, x_0, \hat \tau_i, \hat \xi_i}, \tau^j_i, \xi_i^j \right] 1_{A_j} \right]\\
&\leq &\mathbb{E} \left[ \int_{t_0}^{\tau} f(s, X^{t_0, x_0, \tau_i, \xi_i}_s)) ds \right] + \mathbb{E} \left[ \int_{\tau}^{\hat \tau} f(s, X^{t_0, x_0, \hat \tau_i, \hat \xi_i}_s)) ds \right]\\
&&  + \mathbb{E} \left[ \sum_i B(\tau_i, \xi_i) 1_{\tau_i \leq \tau} \right] + \mathbb{E} \left[  \sum_{j} V( t_j, X_{t_j - }^{t_0, x_0, \hat \tau_i, \hat \xi_i})) 1_{A_j} \right] + \epsilon.
\end{eqnarray*}

Now, for the second term in the last expression, we see
\begin{eqnarray}
&&\mathbb{E} \left[ \int_{\tau}^{\hat \tau} f(s, X^{t_0, x_0, \hat \tau_i, \hat \xi_i}_s)) ds \right] \leq \mathbb{E} \left[ \int_\tau^{\hat \tau} C (1+|X_s|^{\gamma + \delta})ds\right] \nonumber\\
&\leq &\mathbb{E}  \left[ \int_\tau^{\hat \tau} C (1+\mathbb{E}^\omega|X_s - X_\tau|^{\gamma + \delta} + |X_\tau|^{\gamma + \delta} )ds\right]\nonumber\\
&\leq &\mathbb{E}  \left[ \int_\tau^{\hat \tau} C (1 + |X_\tau|^{\mu} )ds\right]
\leq \mathbb{E}  \left[\mathbb{E}^\omega \left[ \int_\tau^{\hat \tau} C (1 + |X_\tau|^{\mu} )ds\right]\right] \nonumber\\
&\leq &\mathbb{E} \left[\epsilon C (1 + |X_\tau|^{\mu} ) \right]
\leq C \epsilon (1 + |x_0|^{\mu} ).
\end{eqnarray}

Therefore, it suffices to bound the following expression,
\[
\mathbb{E} \left[  \sum_{j} [V( t_j, X_{t_j - }^{t_0, x_0, \hat u_\cdot, \hat \tau_i, \hat \xi_i})- V( \tau, X_{\tau - }^{t_0, x_0, u_\cdot, \tau_i, \xi_i})] 1_{A_j} \right].
\]

First, note that on the interval $[\tau, \hat \tau)$, $X = X^{t_0, x_0, \hat u_\cdot, \hat \tau_i, \hat \xi_i}$ solves the jump SDE with no impulse:
\begin{eqnarray*}
X_{t \wedge \tau} - X_{\tau} &=& \int_\tau^{t \wedge \hat \tau}  b(s, X_s) dt +\int_\tau^{t \wedge \hat \tau}  \sigma(s, X_s)  dW\\
 && + \int_\tau^{t \wedge \hat \tau} \int j_1(s, X_s, z) \tilde{N}(dz, ds)\\
 && + \int_\tau^{t \wedge \hat \tau} \int j_2(s, X_s, z) N(dz, ds).
\end{eqnarray*}
In particular, under $(\mathbb{P} |( {\cal F}^\circ)^{W, N}_{t_0, \tau})$, $\tau$, $X^{t_0, x_0, \hat \tau_i, \hat \xi_i}_\tau$ and $\hat \tau$ are all deterministic, hence the following estimates
\begin{eqnarray*}
\mathbb{E}^{(\mathbb{P} |( {\cal F}^\circ)^{W, N}_{t_0, \tau})(\omega)}|X_{\hat \tau (\omega)}|^\beta &\leq &C (1 + |X_{\tau(\omega)}|^\beta), \quad\quad {\rm if}\ \beta>0,\\
\mathbb{E}^{(\mathbb{P} |( {\cal F}^\circ)^{W, N}_{t_0, \tau})(\omega)}|X_{\hat \tau (\omega)}- X_{\tau(\omega)}|^\beta &\leq &C (1 + |X_{\tau(\omega)}|^\beta)(\hat \tau(\omega) - \tau(\omega))^{\beta/2 \wedge 1}\\
&\leq &C (1 + |X_{\tau(\omega)}|^\beta)\epsilon^{\beta/2 \wedge 1},  \quad\quad {\rm if }\ \beta \geq \nu.\\
\end{eqnarray*}

Thus, let $E^\omega = \mathbb{E}^{(\mathbb{P} |( {\cal F}^\circ)^{W, N}_{t_0, \tau})(\omega)}$, we see
\nonumber
\begin{eqnarray*}
&&\mathbb{E}^\omega [V( t_j, X_{t_j - }^{t_0, x_0, \hat \tau_i, \hat \xi_i})- V( \tau, X_{\tau - }^{t_0, x_0, \tau_i, \xi_i})] \\
&\leq &\mathbb{E}^\omega \left[ C(1+|X_{\tau}(\omega)|^\mu +|X_{\hat \tau}|^\mu) |\hat \tau- \tau(\omega)|^{\delta/2} +C(1+|X_{\tau}(\omega)|^\gamma +|X_{\hat \tau}|^\gamma) |X_{\hat \tau} - X_{\tau}(\omega)|^{\delta} \right]\\
&\leq &C(1+|X_{\tau}(\omega)|^\mu + \mathbb{E}^\omega |X_{\hat \tau}|^\mu) \epsilon^{\delta/2} +C\mathbb{E}^\omega \left[(1+|X_{\tau}(\omega)|^\gamma +|X_{\hat \tau}|^\gamma) |X_{\hat \tau} - X_{\tau}(\omega)|^{\delta}\right]\\
&\leq &C(1+|X_{\tau}(\omega)|^\mu) \epsilon^{\delta/2} +C\left(\mathbb{E}^\omega \left[(1+|X_{\tau}(\omega)|^\gamma +|X_{\hat \tau-}|^\gamma \right]^{p'} \right)^{1/p'} \left( \mathbb{E}^\omega \left[|X_{\hat \tau} - X_{\tau}(\omega)|^{\delta}\right]^p \right)^{1/p}
\end{eqnarray*}
(where $p = \mu/\delta > 0$, and $1/p + 1/p' = 1$)
\nonumber
\begin{eqnarray*}
&\leq &C(1+|X_{\tau}(\omega)|^\mu) \epsilon^{\delta/2} +C\left(1+|X_{\tau}(\omega)|^\gamma +\left(\mathbb{E}^\omega|X_{\hat \tau-}|^{\gamma p'} \right)^{1/p'} \right)\left( \mathbb{E}^\omega |X_{\hat \tau} - X_{\tau}(\omega)|^{\mu} \right)^{\delta/\mu}\\
&\leq &C(1+|X_{\tau}(\omega)|^\mu) \epsilon^{\delta/2} + C\left(1+|X_{\tau}(\omega)|^\gamma \right) (1+ |X_{\tau(\omega)}|^\mu)^{\delta/\mu} \epsilon^{(\nu/2 \wedge 1)\frac{\delta}{\mu}}\\
&\leq &C(1+|X_{\tau}(\omega)|^\mu) \epsilon^{\delta/2} + C\left(1+|X_{\tau}(\omega)|^\mu \right) \epsilon^{(\nu/2 \wedge 1)\frac{\delta}{\mu}}.
\end{eqnarray*}
Taking expectation, we get
\begin{eqnarray}
&&\mathbb{E} \left[  \sum_{j} [V( t_j, X_{t_j - }^{t_0, x_0, \hat \tau_i, \hat \xi_i})- V( \tau, X_{\tau - }^{t_0, x_0, \tau_i, \xi_i})] 1_{A_j} \right] \nonumber \\
&\leq &C(1+\mathbb{E}|X_{\tau}|^\mu) \epsilon^{\delta/2} + C\left(1+\mathbb{E}|X_{\tau}|^\mu \right) \epsilon^{(\nu/2 \wedge 1)\frac{\delta}{\mu}}\nonumber \\
&\leq &C(1+|x_0|^\mu) \epsilon^{\delta/2} + C\left(1+|x_0|^\mu \right) \epsilon^{(\nu/2 \wedge 1)\frac{\delta}{\mu}}.
\end{eqnarray}
The last inequality follows from Corollary 3.7 in  Tang \& Yong \cite{TY93}.

With these two bounds, and taking $\epsilon\to 0$, we get the desired inequality and the DPP.

\end{proof}

\subsection{Preliminary Results}
To analyze the value function, we also need some preliminary results, in addition to the DPP.
\begin{lemma}
The set
\[
\Xi (x, t) := \{ \xi \in \mathbb{R}^n: M V(x, t) = V(x + \xi, t) + B(\xi, t)\}
\]
is nonempty, i.e. the infimum is in fact a minimum. Moreover, for $(x, t)$ in bounded $B' \subset\mathbb{R}^n \times [0, T]$, $\{ (y, t) : y = x+\xi, (x, t) \in B', \xi \in \Xi(x, t)\}$ is also bounded.
\end{lemma}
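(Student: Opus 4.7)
The plan is a standard compactness argument, relying crucially on the fact that $B$ grows faster than $V$ decays. First I would verify that $MV(x,t)$ is a finite real number for each $(x,t)$: the lower bound $V(\cdot,t)\geq -C(T+1)$ from Lemma \ref{Holder} together with $B(\xi,t)\geq L$ from Assumption \ref{assumpLower} gives $MV(x,t) \geq L - C(T+1) > -\infty$, while evaluating the infimand at $\xi = 0$ yields $MV(x,t) \leq V(x,t) + B(0,t) < \infty$ since both $V$ and $B$ are continuous.

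Next, for fixed $(x,t)$, I would show that any minimizing sequence $\{\xi_n\}\subset\mathbb{R}^n$ with $V(x+\xi_n,t) + B(\xi_n,t) \to MV(x,t)$ is bounded. Indeed, Assumption \ref{assumpLower} gives
\[
C|\xi_n|^\mu + L \;\leq\; B(\xi_n,t) \;=\; \bigl[V(x+\xi_n,t)+B(\xi_n,t)\bigr] - V(x+\xi_n,t) \;\leq\; MV(x,t)+1 + C(T+1)
\]
for all $n$ large, so $|\xi_n|$ is uniformly bounded. By Bolzano--Weierstrass a subsequence $\xi_{n_k}\to\xi^\ast$, and joint continuity of $V$ (Lemma \ref{Holder}) and $B$ (Assumption \ref{assumpStruct}) implies $V(x+\xi^\ast,t) + B(\xi^\ast,t) = MV(x,t)$, hence $\xi^\ast\in\Xi(x,t)$ and $\Xi(x,t)\neq\emptyset$.

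For the uniform statement on a bounded set $B'\subset\mathbb{R}^n\times[0,T]$, the same inequalities work with the constants chosen uniformly. Since $V$ and $B(0,\cdot)$ are continuous, one has $MV(x,t)\leq V(x,t)+B(0,t) \leq M_0$ for some $M_0 = M_0(B')$. Then for every $\xi\in\Xi(x,t)$ with $(x,t)\in B'$,
\[
C|\xi|^\mu + L \;\leq\; B(\xi,t) \;=\; MV(x,t) - V(x+\xi,t) \;\leq\; M_0 + C(T+1),
\]
which gives a bound on $|\xi|$ depending only on $B'$. Combined with the boundedness of $\{x:(x,t)\in B'\}$, this yields boundedness of $\{(x+\xi,t): (x,t)\in B',\,\xi\in\Xi(x,t)\}$.

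There is no real obstacle here: the argument is driven entirely by the dominance relation $\gamma+\delta<\mu$ from Assumption \ref{assumpDom}, which ensures that $B(\xi,t)$ eventually dominates the worst-case value of $V(x+\xi,t)$ in $\xi$, forcing the minimizing sequence to stay in a compact region where the continuous function $V(x+\cdot,t)+B(\cdot,t)$ attains its infimum. The only thing to be careful about is invoking the \emph{lower} bound on $V$ (not its growth estimate) to close the inequality, since the upper growth bound on $V$ plays no role in this particular lemma.
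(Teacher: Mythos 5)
Your argument is correct and takes essentially the same approach as the paper, whose proof is a one-line citation of exactly the facts you use: the coercivity $B(\xi,t)\geq L+C|\xi|^\mu$, the bounds on $V$ from the H\"older estimate, and the dominance condition. Your fleshed-out compactness argument (bounded minimizing sequence, convergent subsequence, continuity of $V$ and $B$) is a valid expansion of that, and your side remark that only the \emph{constant} lower bound on $V$ is needed to close the coercivity inequality is accurate.
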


\begin{proof}
This is easy by $B(\xi, t) \geq L + C|\xi|^\mu$, $-C \leq V \leq C(1+|x|^{\gamma + \delta})$, and $\mu > \gamma + \delta$.
\end{proof}

\begin{lemma}\label{reg1}(Theorem 4.9 in \cite{Lieberman96})
Assume that $a_{ij}, b_i, f \in C^\alpha_{loc}(\mathbb{R}^n \times (0, T))$. If $-u_t + Lu = f$ in ${\cal C}$ in the viscosity sense, then it solves the PDE in the classical sense as well, and $u(x, T-t) \in C^{2+\alpha, 1+\frac{\alpha}{2}}_{loc} ({\cal C})$.
\end{lemma}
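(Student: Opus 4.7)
The plan is to reduce this interior regularity statement to classical Schauder theory for linear parabolic equations with H\"older-continuous coefficients. The strategy is to compare the viscosity solution $u$ with a classical solution of the same linear Dirichlet problem on each small parabolic cylinder contained in ${\cal C}$, and conclude equality from the comparison principle for viscosity solutions.

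First, I would pass to a forward-parabolic form by setting $\tilde u(x,t) = u(x, T-t)$, which converts $-u_t + Lu = f$ into a standard forward parabolic equation $\tilde u_t + \tilde L \tilde u = \tilde f$ on the time-reflected continuation region, with coefficients still locally $C^\alpha$ in the new variables. Then I would fix an arbitrary parabolic cylinder $Q = B(x_0,r) \times (t_1,t_2)$ whose parabolic closure lies in the reflected image of ${\cal C}$. Since $a_{ij}, b_i, \tilde f$ are locally $C^\alpha$ in a neighborhood of $\overline Q$, classical linear parabolic Schauder theory produces a solution $v \in C^{2+\alpha,1+\alpha/2}(Q) \cap C(\overline Q)$ of the Dirichlet problem
\begin{equation*}
v_t + \tilde L v = \tilde f \mbox{ in } Q, \qquad v = \tilde u \mbox{ on } \partial_P Q,
\end{equation*}
where the merely continuous parabolic boundary data $\tilde u|_{\partial_P Q}$ are handled by smooth approximation combined with interior Schauder estimates to pass to the limit.

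Next, I would apply the comparison principle for viscosity solutions of this linear parabolic PDE to conclude $\tilde u \equiv v$ on $\overline Q$: the classical solution $v$ is in particular a viscosity solution, and the standard doubling-of-variables argument (together with the H\"older regularity of the coefficients, which supplies the structural condition required for comparison) forces any two viscosity solutions that share continuous parabolic-boundary data on $\partial_P Q$ to coincide. Consequently $\tilde u$ inherits the $C^{2+\alpha,1+\alpha/2}$ regularity of $v$ on $Q$, and in particular solves the equation in the classical sense there. Since $Q$ was an arbitrary cylinder compactly contained in the reflected region, reverting the time change yields $u(x,T-t) \in C^{2+\alpha,1+\alpha/2}_{loc}({\cal C})$, which is what was asserted.

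The step I expect to be the main obstacle is the comparison between $\tilde u$ and $v$ when the common boundary datum is only continuous, since one must rule out strict interior disagreement without appealing to smoothness of the data. This is handled by the classical trick of perturbing by $\pm \varepsilon(1+|x|^2+t)$ to create strict viscosity sub-/supersolutions, invoking the parabolic viscosity maximum principle on $\overline Q$, and sending $\varepsilon \to 0$. Everything else, namely the existence and $C^{2+\alpha,1+\alpha/2}$ regularity of $v$, is a direct quotation of linear parabolic Schauder theory, so beyond verifying the quoted hypotheses no further computation is needed.
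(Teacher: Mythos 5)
The paper does not actually prove this lemma: it is quoted wholesale as Theorem 4.9 of Lieberman, so there is no ``paper proof'' to compare against, and your proposal supplies the standard argument that the citation leaves implicit. Your route --- time reflection, solving the linear Dirichlet problem on each small cylinder $Q \subset\subset {\cal C}$ by Schauder/Perron theory with continuous boundary data, identifying the viscosity solution with that classical solution by comparison, and reading off interior $C^{2+\alpha,1+\alpha/2}$ regularity --- is exactly the canonical way to upgrade a viscosity solution of a linear parabolic equation with H\"older coefficients to a classical one, and the overall structure is correct.

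Two caveats. First, your parenthetical claim that ``the H\"older regularity of the coefficients supplies the structural condition required for comparison'' is not accurate as stated: the structure condition needed in the doubling-of-variables argument for $\mathrm{tr}(A(x)X)$ requires (essentially) a Lipschitz square root of $A$, not mere H\"older continuity of $a_{ij}$. In this paper that is harmless because $\sigma$ is Lipschitz by Assumption~2, so $A=\tfrac12\sigma\sigma^T$ does satisfy the structure condition; alternatively, since one of the two functions being compared ($v$) is a classical $C^{2,1}$ solution in the interior, you can avoid doubling altogether by using $v+\varepsilon/(t_2-t)$ directly as a test function at an interior maximum of $\tilde u - v$. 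You should say one of these rather than attribute the comparison to H\"older regularity. Second, both the Schauder existence step and the cited Lieberman theorem tacitly require non-degeneracy of $(a_{ij})$ on the relevant compact sets; the lemma's hypotheses as printed do not state uniform ellipticity, so you are (reasonably) importing the same implicit assumption the paper makes. With those two points flagged, the proposal is sound.
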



\begin{lemma}
The value function $V$ and $M V$ satisfies $V(x, t) \leq M V (x, t)$ pointwise.
\end{lemma}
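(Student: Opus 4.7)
The plan is to show, for every $\xi \in \mathbb{R}^n$, the pointwise inequality $V(x, t) \leq V(x + \xi, t) + B(\xi, t)$; taking the infimum over $\xi$ then yields $V \leq MV$. The natural suboptimal strategy at $(x, t)$ is to apply an immediate impulse of size $\xi$, instantly shifting the state from $x$ to $x + \xi$ at cost $B(\xi, t)$, and then to follow a near-optimal policy for the problem starting from $(x + \xi, t)$.

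I would fix $(x, t) \in \mathbb{R}^n \times [0, T]$, $\epsilon > 0$, and $\xi \in \mathbb{R}^n \setminus \{0\}$, and by definition of $V(x + \xi, t)$ choose $(\tilde\tau_i, \tilde\xi_i) \in {\cal V}[t, T]$ with $J[x + \xi, t, \tilde\tau_i, \tilde\xi_i] \leq V(x + \xi, t) + \epsilon$. Then I would define a new control by $\tau_1 = t$, $\xi_1 = \xi$, and $(\tau_{i+1}, \xi_{i+1}) = (\tilde\tau_i, \tilde\xi_i)$ for $i \geq 1$. Admissibility is immediate: $\tau_1 \equiv t$ is a deterministic stopping time, $\xi_1 \equiv \xi$ is ${\cal F}^{W,N}_{t, t}$-measurable as a constant, the monotonicity $\tau_i \leq \tau_{i+1}$ follows from $\tilde\tau_1 \geq t = \tau_1$ together with the monotonicity of the $\tilde\tau_i$, and the measurability of each $\xi_i = \tilde\xi_{i-1}$ is inherited from $(\tilde\tau_i, \tilde\xi_i) \in {\cal V}[t, T]$.

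Next I would identify the two controlled processes. Reading off equation (\ref{LevySDE}), the solution $X^{t, x, \tau_i, \xi_i}$ takes value $x + \xi$ at time $t$ since the first impulse fires instantly, and thereafter evolves under the same coefficients, Brownian motion, Poisson process, and subsequent impulse schedule as $X^{t, x + \xi, \tilde\tau_i, \tilde\xi_i}$; by pathwise uniqueness (guaranteed by the Lipschitz conditions in Assumption \ref{assumpLip}) the two processes coincide on $[t, T]$. Accounting for the single extra impulse cost,
\begin{equation*}
J[x, t, \tau_i, \xi_i] = B(\xi, t) + J[x + \xi, t, \tilde\tau_i, \tilde\xi_i] \leq B(\xi, t) + V(x + \xi, t) + \epsilon.
\end{equation*}
Since $V(x, t) \leq J[x, t, \tau_i, \xi_i]$ and $\epsilon$ is arbitrary, this gives $V(x, t) \leq V(x + \xi, t) + B(\xi, t)$ for every $\xi \neq 0$. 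The case $\xi = 0$ is trivial since $B(0, t) \geq L > 0$ by Assumption \ref{assumpLower}. Taking the infimum over $\xi \in \mathbb{R}^n$ yields $V(x, t) \leq MV(x, t)$. There is no serious obstacle; the only items needing any care are admissibility of the prepended impulse and the pathwise identification of the two controlled processes, both of which are routine.
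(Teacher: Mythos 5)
Your argument is correct; the paper in fact states this lemma without any proof, and your construction---prepending an immediate impulse $\xi$ at time $t$ to an $\epsilon$-optimal control for $(x+\xi,t)$, identifying the two controlled processes, and taking the infimum over $\xi$ (with the trivial $\xi=0$ case handled by $B\geq L>0$)---is precisely the standard argument the authors implicitly rely on. No gaps.
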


\begin{lemma}
$M V$ is continuous, and there exists $C$ such that for any $x, y \in \mathbb{R}^n$, $s<t$,
\begin{eqnarray*}
|MV(x, t) - M V(y, t) | &\leq &C(1+|x|^\gamma + |y|^\gamma) |x - y|^\delta,\\
M V(x, t) - MV(x, s)  &\leq &C(1+|x|^\mu) |t - s|^{\delta/2}.
\end{eqnarray*}
\end{lemma}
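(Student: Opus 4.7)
\smallskip
\noindent\textbf{Proof proposal.} The plan is a standard ``compare via a common $\xi^\ast$'' argument. For each of the two inequalities, I pick a minimizer $\xi^\ast$ of $MV$ at one of the two points (nonempty by the previous lemma), use it as a suboptimal choice at the other point, and then invoke the H\"older estimate for $V$ from Lemma~\ref{Holder} together with the monotonicity of $B$ in $t$ from Assumption~\ref{assumpStruct}. The key preparatory observation, which I will use repeatedly, is that any $\xi^\ast\in\Xi(x,t)$ satisfies $|\xi^\ast|\leq C(1+|x|)$: indeed, comparing against $\xi=0$ yields $L+C|\xi^\ast|^\mu\leq B(\xi^\ast,t)\leq V(x,t)-V(x+\xi^\ast,t)+B(0,t)\leq C(1+|x|^{\gamma+\delta})$, and since $\mu>\gamma+\delta$ by Assumption~\ref{assumpDom}, we obtain $|\xi^\ast|\leq C(1+|x|^{(\gamma+\delta)/\mu})\leq C(1+|x|)$.

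For the spatial estimate, fix $x,y\in\mathbb{R}^n$ and pick $\xi^\ast\in\Xi(x,t)$. Using $\xi^\ast$ as a test point at $(y,t)$,
\begin{eqnarray*}
MV(y,t)-MV(x,t) \leq V(y+\xi^\ast,t)-V(x+\xi^\ast,t) \leq C(1+|x+\xi^\ast|^\gamma+|y+\xi^\ast|^\gamma)|x-y|^\delta.
\end{eqnarray*}
Inserting the bound $|\xi^\ast|\leq C(1+|x|)$ absorbs the $\xi^\ast$-dependence into $C(1+|x|^\gamma+|y|^\gamma)$, and symmetry in $x,y$ closes the inequality.

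For the temporal estimate with $s<t$, pick $\xi^\ast\in\Xi(x,s)$. By monotonicity of $B$ in $t$,
\begin{eqnarray*}
MV(x,t) \leq V(x+\xi^\ast,t)+B(\xi^\ast,t) \leq V(x+\xi^\ast,t)+B(\xi^\ast,s),
\end{eqnarray*}
so that $MV(x,t)-MV(x,s)\leq V(x+\xi^\ast,t)-V(x+\xi^\ast,s)\leq C(1+|x+\xi^\ast|^\mu)|t-s|^{\delta/2}\leq C(1+|x|^\mu)|t-s|^{\delta/2}$ after bounding $|\xi^\ast|$. Here the monotonicity of $B$ in time is precisely what gives a one-sided estimate for free; no regularity of $B$ in $t$ is needed.

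Finally, for continuity at an arbitrary $(x,t)$, take $(x_n,t_n)\to(x,t)$. The spatial estimate gives $|MV(x_n,t_n)-MV(x,t_n)|\to 0$, so it suffices to show $MV(x,t_n)\to MV(x,t)$. Choose $\xi_n^\ast\in\Xi(x,t_n)$; the previous lemma bounds $\{\xi_n^\ast\}$, so a subsequence converges to some $\xi^\ast$. Continuity of $V$ (Lemma~\ref{Holder}) and of $B$ (Assumption~\ref{assumpStruct}) gives $\lim MV(x,t_{n_k})=V(x+\xi^\ast,t)+B(\xi^\ast,t)\geq MV(x,t)$, hence $\liminf MV(x,t_n)\geq MV(x,t)$. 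For the reverse, given $\epsilon>0$, pick $\xi_\epsilon$ with $V(x+\xi_\epsilon,t)+B(\xi_\epsilon,t)<MV(x,t)+\epsilon$; then $MV(x,t_n)\leq V(x+\xi_\epsilon,t_n)+B(\xi_\epsilon,t_n)$, whose limit is $<MV(x,t)+\epsilon$, so $\limsup MV(x,t_n)\leq MV(x,t)$. I expect no serious obstacle; the only step that requires care is the a priori bound on $|\xi^\ast|$, but Assumption~\ref{assumpDom} together with the growth bounds on $V$ makes this straightforward.
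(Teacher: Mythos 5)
Your proposal is correct and follows essentially the same argument as the paper: fix a minimizer $\xi^\ast$ at one of the two points, test it at the other, and invoke the H\"older estimates on $V$ from Lemma~\ref{Holder} together with the monotonicity of $B$ in $t$ (the paper's continuity step instead appeals to uniform continuity of $\xi\mapsto V(x+\xi,t)+B(\xi,t)$ over the compact set of candidate minimizers, an inessential difference from your subsequence argument). Your quantitative a priori bound $|\xi^\ast|\leq C(1+|x|)$ is just an explicit form of the boundedness of $\Xi(x,t)$ already recorded in the preceding lemma, and it correctly uses $\gamma+\delta<\mu$ from Assumption~\ref{assumpDom}.
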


\begin{proof}
First we prove continuity. For each $\xi$, $V(x, t) + B(\xi, t)$ is a uniformly continuous function on compact sets. And since $\Xi(x, t)$ is bounded for $(x, t)$ on compact sets, taking the infimum over $\xi$ on some fixed compact sets implies that $MV$ is continuous.

For the H$\ddot{o}$lder continuity in $t$, let $\xi \in \Xi(x, s)$, then
\begin{eqnarray*}
&&MV(x, t) - MV(x, s)\\
&\leq &V(x+\xi, t)+ B(\xi, t) - V(x+\xi, s) - B(\xi, s)\\
&\leq &C(1+|x|^\mu)|t-s|^{\delta/2},
\end{eqnarray*}
given that $B(\xi, s) \geq B(\xi, t)$ for $s < t$.
\end{proof}


As a consequence, the continuous region ${\cal C}$ is open.

\begin{lemma}\label{criticalLemma}
Fix $x$ in some bounded $B \subset \mathbb{R}^n$. Let $\epsilon > 0$. For any
\begin{eqnarray*}
\xi \in \Xi^\epsilon (x, t) = \{\xi: V(x + \xi, t) + B(\xi, t) < MV (x, t) + \epsilon \},
\end{eqnarray*}
we have
\begin{equation}
V(x+ \xi , t) + K - \epsilon < M V (x + \xi  , t ).
\end{equation}
In particular, Let ${\cal C}^{K/2} = \{ (x, t) \in \mathbb{R}^n \times [0, T]: V(x, t) < M V (x, t) - K/2\}$. Then if $\xi \in \Xi^{K/2}(x, t)$, then $( x+\xi , t ) \in {\cal C}^{K/2}$.
\end{lemma}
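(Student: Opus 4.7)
The whole statement is a direct consequence of the subadditivity-with-gap property in Assumption \ref{assumpStruct}: $B(t,\xi+\hat\xi)+K\le B(t,\xi)+B(t,\hat\xi)$ on compact subsets. The idea is that if $\xi$ is $\epsilon$-almost-optimal for $MV(x,t)$, then any further impulse $\hat\xi$ applied on top of $\xi$ would have been comparable to a single impulse $\xi+\hat\xi$ at $x$, but subadditivity forces the combined cost to be strictly cheaper by at least $K$; consequently, no further impulse can be profitable at $(x+\xi,t)$ beyond $K-\epsilon$.

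\textbf{Step 1.} Fix $(x,t)$ in the bounded set $B$ and take any $\xi\in\Xi^\epsilon(x,t)$. By the previous lemma, $\xi$ lies in a bounded set depending only on $B$, so all quantities below involve arguments in a fixed compact subset of $\mathbb{R}^n\times[0,T)$ on which Assumption \ref{assumpStruct} applies. Since the inequality $V(x+\xi,t)+B(\xi,t)<MV(x,t)+\epsilon$ is strict, there exists $\delta'>0$ (depending on $\xi$) with
\[
V(x+\xi,t)+B(\xi,t)+\delta'\;\le\;MV(x,t)+\epsilon.
\]

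\textbf{Step 2.} For an arbitrary $\hat\xi\in\mathbb{R}^n$, the shifted point $\xi+\hat\xi$ is admissible in the infimum defining $MV(x,t)$, so
\[
MV(x,t)\;\le\;V(x+\xi+\hat\xi,t)+B(\xi+\hat\xi,t).
\]
Combining this with Step 1 and applying subadditivity $B(\xi+\hat\xi,t)-B(\xi,t)\le B(\hat\xi,t)-K$ yields
\[
V(x+\xi,t)+\delta'+K-\epsilon\;\le\;V(x+\xi+\hat\xi,t)+B(\hat\xi,t).
\]

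\textbf{Step 3.} Taking the infimum over $\hat\xi$ on the right-hand side (which is attained at some $\hat\xi\in\Xi(x+\xi,t)$ by the earlier lemma, so the infimum is actually a minimum) gives
\[
V(x+\xi,t)+K-\epsilon\;\le\;V(x+\xi,t)+K-\epsilon+\delta'\;\le\;MV(x+\xi,t),
\]
which, because $\delta'>0$, is the desired strict inequality $V(x+\xi,t)+K-\epsilon<MV(x+\xi,t)$.

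\textbf{Step 4 (the ``in particular'').} Specializing to $\epsilon=K/2$: for $\xi\in\Xi^{K/2}(x,t)$ the first part gives $V(x+\xi,t)+K/2<MV(x+\xi,t)$, i.e.\ $V(x+\xi,t)<MV(x+\xi,t)-K/2$, which is exactly the defining condition of $\mathcal{C}^{K/2}$. The only place any care is needed is in Step 3, where one must not collapse the strict inequality into a mere ``$\le$'' upon taking the infimum; this is handled by carrying the positive slack $\delta'>0$ produced by the strict hypothesis $\xi\in\Xi^\epsilon(x,t)$. No other subtlety is expected: once the subadditivity constant $K$ is extracted, the result is immediate.
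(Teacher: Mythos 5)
Your proof is correct and follows essentially the same route as the paper's: both arguments rest on applying the subadditivity-with-gap in Assumption \ref{assumpStruct} to the pair $(\xi,\hat\xi)$ inside the infimum defining $MV(x+\xi,t)$ and then recognizing the shifted infimum as $MV(x,t)$. Your explicit slack $\delta'$ is just a more careful bookkeeping of the strict inequality that the paper carries in its final line.
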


\begin{proof}
Suppose $\xi \in \Xi^\epsilon(x, t)$, i.e.
\begin{eqnarray*}
V(x + \xi, t) + B(\xi, t) <  M V(x, t) + \epsilon.
\end{eqnarray*}
Then,
\begin{eqnarray*}
M V(x + \xi, t) &= &\inf_\eta V(x + \xi + \eta, t) + B(\eta, t)\\
&= &\inf_\eta V(x + \xi + \eta, t) + B(\xi + \eta, t) - B(\xi + \eta, t) +  B(\eta, t)\\
&\geq & \inf_\eta V(x + \xi + \eta, t) +  B(\xi + \eta, t) - B(\xi, t) + K\\
&= &\inf_{\eta'} V(x + \eta', t) + B(\eta', t) - B(\xi, t) + K\\
&= &MV(x, t) - B(\xi, t) + K\\
&> &V(x + \xi, t) - \epsilon + K.
\end{eqnarray*}
Let $\epsilon = K/2$, we get that $\xi \in \Xi^{K/2}(x, t)$ implies $x + \xi \in {\cal C}^{K/2}$.
\end{proof}

\begin{lemma}\label{semiconcave}
$MV$ is uniformly semi-concave in $x$, and $MV_t$ is bounded above in the distributional sense on compact sets away from $t = T$.
\end{lemma}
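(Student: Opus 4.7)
The plan is to transfer both regularity questions from $MV$ onto $V$ by exploiting that an optimizer $\xi^*\in\Xi(x,t)$, which exists and stays in a bounded set for $(x,t)$ in a compact set by the preceding lemma, forces the shifted point $(x+\xi^*,t)$ to lie strictly inside the continuation region $\mathcal{C}^{K/2}$ (Lemma~\ref{criticalLemma}). Inside $\mathcal{C}^{K/2}$ the value function satisfies in the viscosity sense the linear parabolic equation $-V_t+LV=f+IV$ (with $IV$ treated as a forcing term), and Lemma~\ref{reg1} upgrades $V$ to a classical $C^{2,1}_{loc}$ solution with interior Schauder bounds. The two claims of the lemma then reduce to uniform upper bounds on $D^2V$ and $V_t$ on the compact set of shifted points.

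Concretely, for $(x,t)$ in a compact $K\subset\mathbb{R}^n\times[0,T)$ and any $h\in\mathbb{R}^n$, using the sub-optimality of $\xi^*$ at $x\pm h$ together with the identity $MV(x,t)=V(x+\xi^*,t)+B(\xi^*,t)$ causes the $B$-terms to cancel in the second difference, yielding
\[
MV(x+h,t)+MV(x-h,t)-2\,MV(x,t)\leq V(x+\xi^*+h,t)+V(x+\xi^*-h,t)-2V(x+\xi^*,t)\leq C|h|^2,
\]
the final inequality by a uniform Hessian bound on $V$ over a compact neighborhood of the shifted-optimizer set inside $\mathcal{C}^{K/2}$. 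For the distributional bound on $MV_t$, the same device is applied across time: for $s<t$ close and fixed $x$, pick $\xi_s^*\in\Xi(x,s)$ and use the monotonicity $B(\xi_s^*,t)\leq B(\xi_s^*,s)$ from Assumption~\ref{assumpStruct}, so that the $B$-terms again cancel, producing
\[
MV(x,t)-MV(x,s)\leq V(x+\xi_s^*,t)-V(x+\xi_s^*,s)\leq C(t-s),
\]
using $|V_t|\leq C$ on the relevant neighborhood. This one-sided Lipschitz estimate in $t$ is exactly $MV_t\leq C$ in the distributional sense on compact subsets of $\mathbb{R}^n\times[0,T)$.

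The main obstacle is the \emph{uniform} interior $C^{2,1}$ bound on $V$ over the shifted-optimizer set $\{(x+\xi^*(x,t),t):(x,t)\in K\}$. This requires (i) Theorem~\ref{thmvis} to identify $V$ as a viscosity solution in $\mathcal{C}$, (ii) a bootstrap verifying that $IV$ is locally Hölder, by combining the Hölder continuity of $V$ from Lemma~\ref{Holder} with the integrability condition of Assumption~\ref{assumpInt}, so that Lemma~\ref{reg1} applies, and (iii) a uniform positive lower bound on the distance from the shifted-optimizer set to $\partial\mathcal{C}^{K/2}$, which follows from continuity of $MV-V$ together with the strict gap $V\leq MV-K/2$ that defines $\mathcal{C}^{K/2}$.
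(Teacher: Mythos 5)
Your proposal is correct and follows essentially the same route as the paper: pick an optimizer $\xi^*\in\Xi(x,t)$, use its sub-optimality at the nearby points so the $B$-terms cancel (with the monotonicity of $B$ in $t$ for the time bound), land via Lemma~\ref{criticalLemma} in $\mathcal{C}^{K/2}$, and invoke the interior bounds on $D^2V$ and $V_t$ from Lemma~\ref{reg1}. You spell out the uniformity of the interior Schauder bounds over the shifted-optimizer set more carefully than the paper does, but the argument is the same.
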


\begin{proof}
Let $A$ be a compact subset of $\mathbb{R}^n \times [0, T-\delta]$. For any $\xi \in \Xi (x, t)$ for $(x, t) \in A$, $(x+\xi, t)$ lies in a bounded region $B$ independent of $(x, t)$. For any $|y| = 1$ and $\delta >0$ sufficiently small,
\begin{eqnarray*}
&&\frac{MV(x+\delta y, t) - 2 MV(x, t) + M(x- \delta y, t)}{2\delta}\\
 &\leq &\frac{(V(x+\delta y+\xi, t) + B(\xi, t)) - 2(V(x+\xi, t) + B(\xi, t)) + (V(x-\delta y + \xi, t) + B(\xi, t)))}{2 \delta}\\
 &= &\frac{V(x+\delta y+\xi, t) - 2V(x+\xi, t) +V(x-\delta y + \xi, t) }{2 \delta}\\
 &\leq &C \| D^2 V\|_{B \cap {\cal C}^{K/2}},
\end{eqnarray*}
which is bounded by Lemma \ref{reg1}. Similarly,
\begin{eqnarray*}
&&\frac{MV(x, t+\delta) - MV(x, t)}{\delta}\\
&\leq &\frac{V(x+\xi, t+\delta) +B(\xi, t+\delta) - (V(x+\xi, t) + B(\xi, t))}{\delta}\\
&= &\frac{V(x+\xi, t+\delta) - V(x + \xi, t)}{\delta} + \frac{B(\xi, t+\delta) - B(\xi, t)}{\delta}\\
&\leq &C \|V_t\|_{B \cap {\cal C}^{K/2}}.
\end{eqnarray*}
\end{proof}

\section{Value function as a Viscosity Solution}
In this section, we establish the value function $V(x,t)$ as a viscosity solution to the (HJB) equation in the  sense of
\cite{BCI08}.

\begin{theorem}({\bf Viscosity Solution of the Value Function}) \label{thmvis}
Under Assumption 1-9, the value function $V(x,t)$ is a continuous  viscosity solution to the  (HJB) equation
in the following sense: if for any $\phi \in C^2 (\mathbb{R}^n \times [0, T])$,
\begin{enumerate}
\item
$u - \phi$ achieves a local maximum at $(x_0, t_0) \in B(x_0, \theta) \times [t_0, t_0+ \theta)$ with $u(x_0, t_0)= \phi (x_0, t_0)$, then $V$ is a subsolution
\begin{eqnarray*}
\max \{ -\phi_t+ L\phi- f- I^1_\theta [\phi]  - I^2_\theta[u], u - M u \} (x_0, t_0) \leq 0.
\end{eqnarray*}
\item  $u\ge \phi$ and
$u - \phi$ achieves a local minimum at $(x_0, t_0) \in B(x_0, \theta) \times [t_0,t_0+ \theta)$ with $u(x_0, t_0)= \phi (x_0, t_0)$, then $V$ is a supersolution
\begin{eqnarray*}
\max \{ -\phi_t + L\phi - f - I^1_\theta [\phi]  - I^2_\theta[u], u - M u \}(x_0, t_0)\geq 0.
\end{eqnarray*}
\end{enumerate}
Here
\begin{eqnarray*}
I^1_\theta [\phi] (x, t)=&\int_{|z| < \theta} \phi (x+z, t) - \phi(x, t) - D\phi(x, t) \cdot z 1_{|z| < 1}\, \pi(x, t, dz),\\
I^2_\theta [u](x, t)=&\int_{|z| \geq \theta} u (x+z, t) - u(x, t) - D\phi(x, t) \cdot z 1_{|z| < 1}\, \pi(x, t, dz),
\end{eqnarray*}
with the boundary condition $u = g$ on $\mathbb{R}^n \times \{t = T\}$.
\end{theorem}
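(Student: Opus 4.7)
}
The plan is to prove the subsolution and supersolution properties separately, in each case exploiting the Dynamic Programming Principle (Theorem \ref{DPP}) with a carefully chosen stopping time and a carefully chosen family of admissible controls, and then expanding the $C^2$ test function $\phi$ by It\^o's formula for jump diffusions. The obstacle inequality $V \le MV$ is already available from the earlier preliminary lemmas, and the terminal condition $V(\cdot,T) = g$ follows directly from the definition of the value function together with Assumption \ref{assumpTerm}. So the real work is in deriving the integro-differential inequalities. Throughout, I will split the nonlocal operator as $I = I^1_\theta + I^2_\theta$, using $\phi$ to control the small jumps (which can be pushed through $D^2\phi$) and $u$ itself to handle the large jumps (whose magnitude cannot be controlled by the local test function).

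\emph{Subsolution.} Assume $u - \phi$ achieves a local maximum at $(x_0,t_0)$ inside $B(x_0,\theta)\times[t_0,t_0+\theta)$ with equality at $(x_0,t_0)$. The inequality $u \le Mu$ at $(x_0,t_0)$ is immediate. For the other branch, I apply the DPP with the trivial impulse control $\tau_i \equiv \infty$, and with stopping time $\tau = (t_0+h)\wedge \tau_\theta$, where $\tau_\theta$ is the first exit time of $(X_s,s)$ from the parabolic neighborhood on which $u\le\phi$ holds. This yields
\begin{equation*}
\phi(x_0,t_0) = V(x_0,t_0) \le \mathbb{E}\Bigl[\int_{t_0}^{\tau} f(s,X_s)\,ds + V(\tau, X_{\tau})\Bigr],
\end{equation*}
and replacing $V(\tau,X_\tau)$ by $\phi$ for the small-jump/continuous part while keeping $u$ for jumps of size $\ge \theta$ (these are the only ones that may exit $B(x_0,\theta)$) and applying It\^o's formula to $\phi(X_s,s)$ produces, after dividing by $h$ and sending $h\downarrow 0$, the desired inequality $-\phi_t + L\phi - f - I^1_\theta[\phi] - I^2_\theta[u] \le 0$ at $(x_0,t_0)$. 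Dominated convergence is justified here by Assumptions \ref{assumpGrowth} and \ref{assumpInt} together with the growth bound on $V$ from Lemma \ref{Holder}.

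\emph{Supersolution.} Assume $u\ge\phi$ near $(x_0,t_0)$ with equality there. If $V(x_0,t_0) \ge MV(x_0,t_0)$ the max condition is satisfied, so I may suppose the strict inequality $V(x_0,t_0) < MV(x_0,t_0)$; by continuity of $V$ and $MV$, this strict inequality persists on a parabolic cylinder $B(x_0,\theta')\times[t_0,t_0+\theta')$ for some $\theta' \in (0,\theta]$. On such a cylinder any impulse strictly increases the running cost relative to continuing, so for the inf in the DPP I may restrict attention to controls with no impulse on $[t_0, (t_0+h)\wedge\tau_{\theta'})$. For an $\varepsilon$-optimal such control,
\begin{equation*}
V(x_0,t_0) + \varepsilon \ge \mathbb{E}\Bigl[\int_{t_0}^{\tau'} f(s,X_s)\,ds + V(\tau', X_{\tau'})\Bigr]
\end{equation*}
with $\tau' = (t_0+h)\wedge\tau_{\theta'}$, and the same It\^o expansion of $\phi$, exploiting $u\ge\phi$ to pass from $V$ to $\phi$ only on the small-jump/local region, produces the reverse PDE inequality after dividing by $h$, sending $h\downarrow 0$, and finally $\varepsilon\downarrow 0$.

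\emph{Main obstacle.} The most delicate step is the nonlocal $\theta$-splitting: in both directions, the quantity that arises naturally from It\^o's formula is $\int [\phi(x+z,t) - \phi(x,t) - D\phi\cdot z\,\mathbf{1}_{|z|\le 1}]\,\pi(x,t,dz)$, which only makes sense for $\phi\in C^2$ integrated against $\pi$ on $|z|<1$. To obtain $I^1_\theta[\phi] + I^2_\theta[u]$, I must justify replacing $\phi$ by $u$ on $|z|\ge\theta$, which is legitimate precisely because $u\le\phi$ (subsolution) or $u\ge\phi$ (supersolution) only in the local neighborhood, and large jumps can leave it. Controlling the error term $\mathbb{E}\int_{t_0}^{t_0+h}\!\!\int_{|z|\ge 1}|u(X_{s-}+j_1,s) - u(X_{s-},s)|\,\rho(dz)\,ds$ uniformly as $h\downarrow 0$ requires the $|z|^{\gamma+\delta}$-integrability of $\pi$ in Assumption \ref{assumpInt} combined with the H\"older/polynomial growth bound on $V$ from Lemma \ref{Holder}; this is where the Dominance Assumption \ref{assumpDom} is crucial.
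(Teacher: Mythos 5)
Your plan matches the paper's proof in all essentials: both directions use the DPP with the no-impulse control (and, for the supersolution, the restriction to strategies with no impulse before the exit time, justified by the strict gap $V < MV - \epsilon$ persisting on a small parabolic cylinder), It\^o's formula applied to the $C^2$ test function, and the observation that $\phi$ may be modified outside the local neighborhood and passed to a limit by dominated convergence so as to replace $I^2_\theta[\phi]$ by $I^2_\theta[V]$, with integrability supplied by Assumption \ref{assumpInt} and the growth bound of Lemma \ref{Holder}. The only cosmetic difference is that you divide by $h$ and send $h \downarrow 0$, whereas the paper argues by contradiction on the full cylinder on which the reversed inequality is assumed to hold; this changes nothing of substance.
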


\begin{proof}
First, suppose $V - \phi$ achieves a local maximum in $B(x_0, \theta) \times [t_0, t_0 + \theta)$ with $V(x_0, t_0) = \phi(x_0, t_0)$, we prove by contradiction that $ (-\phi_t + L\phi - I^1_\theta[\phi] -I^2_\theta[V] - f )(x_0, t_0) \le 0$.

Suppose otherwise, i.e.  $ (-\phi_t + L\phi - I^1_\theta[\phi] -I^2_\theta[V] - f )(x_0, t_0) > 0$. Then without loss of generality we can assume that $-\phi_t + L\phi - I^1_\theta[\phi] -I^2_\theta[V] - f  > 0$ on $B(x_0, \theta) \times [t_0, t_0 + \theta)$. Since the definition of viscosity solution does not concern the value of $\phi$ outside of $B(x_0, \theta) \times [t_0, t_0 + \theta)$, we can assume that $\phi$ is bounded by multiples of $|V|$. Let $X^0 = X^{x_0, t_0, \infty, 0}$ and
\begin{eqnarray*}
\tau = \inf \{ t \in [t_0, T] : X_t \notin B(x_0, \theta) \times [t_0, t_0 + \theta) \} \wedge T.
\end{eqnarray*} By Ito's formula,
\begin{eqnarray*}
\mathbb{E} \left[ \phi (X^0_\tau, \tau) \right] - \phi(x_0, t_0) = &\mathbb{E} \left[ \int_{t_0}^\tau (\phi_t - L\phi + I^1_\theta [\phi] + I^2_\theta[\phi]) (X^0_{s-}, s) ds \right].
\end{eqnarray*}
Meanwhile, by Theorem\ref{DPP},
\begin{eqnarray*}
V(x_0, t_0) \leq \mathbb{E} \left[ \int_{t_0}^\tau f(X^0_s, s) ds + V(X^0_\tau, \tau) \right].
\end{eqnarray*}
Combining these two inequalities, we get
\begin{eqnarray*}
&&\mathbb{E} \left[ V (X^0_\tau, \tau) \right] - \mathbb{E} \left[ \int_{t_0}^\tau (\phi_t - L\phi + I^1_\theta [\phi] + I^2_\theta[\phi]) (X^0_{s-}, s) ds \right] \\
&\leq &\mathbb{E} \left[ \phi (X^0_\tau, \tau) \right] - \mathbb{E} \left[ \int_{t_0}^\tau (\phi_t - L\phi + I^1_\theta [\phi] + I^2_\theta[\phi]) (X^0_{s-}, s) ds \right] \\
&= &\phi(x_0, t_0) = V(x_0, t_0) \leq \mathbb{E} \left[ \int_{t_0}^\tau f(X^0_s, s) ds + V(X^0_\tau, \tau) \right].
\end{eqnarray*}
That is,
$$\mathbb{E} \left[ \int_{t_0}^\tau (-\phi_t + L\phi - I^1_\theta[\phi] -I^2_\theta[\phi] - f ) (X^0_{s-}, s) ds\right] \leq 0.
$$
Again by modifying the value of $\phi$ outside of $B(x_0, \theta) \times [t_0, t_0 + \theta)$, and since $V \leq \phi$ in $B(x_0, \theta) \times [t_0, t_0 + \theta)$, we can take a sequence of $\phi_k \geq V$ dominated by multiples of $|V|$ such that it converges to $V$ outside of $B(x_0, \theta) \times [t_0, t_0 + \theta)$ from above. By the dominated convergence theorem, $I^2_\theta[\phi]$ converges to $I^2_\theta[V]$. Thus,
\begin{eqnarray*}
\mathbb{E} \left[ \int_{t_0}^\tau (-\phi_t + L\phi - I^1_\theta[\phi] -I^2_\theta[V] - f ) (X^0_{s-}, s) ds\right] \leq 0,
\end{eqnarray*}
which is a contradiction. 
Therefore, we must have $(-\phi_t + L\phi - I^1_\theta[\phi] -I^2_\theta[V] - f  )(x_0, t_0) \leq 0$, and since $V \leq MV$, we have $\max \{ -\phi_t + L\phi - I^1_\theta[\phi] -I^2_\theta[V] - f , V - MV \} (x_0, t_0) \leq 0$.

Next,  suppose $V - \phi$ achieves local minimum in $B(x_0, \theta) \times [t_0, t_0 + \theta)$ with $V(x_0, t_0) = \phi(x_0, t_0)$. Then if
 $(V - MV )(x_0, t_0) = 0$, then we already have the desired inequality. Now
  suppose $V - MV \leq -\epsilon < 0$ and $-\phi_t + L\phi - I^1_\theta[\phi] -I^2_\theta[V] - f < 0$ on $B(x_0, \theta) \times [t_0, t_0 + \theta)$. Assuming as before that $\phi$ is bounded by multiples of $|V|$ outside of $B(x_0, \theta) \times [t_0, t_0 + \theta)$. By Ito's formula
\begin{eqnarray*}
\mathbb{E} \left[ \phi (X^0_\tau, \tau) \right] - \phi(x_0, t_0) = \mathbb{E} \left[ \int_{t_0}^\tau (\phi_t - L\phi + I^1_\theta [\phi] + I^2_\theta[V]) (X^0_{s-}, s) ds \right].
\end{eqnarray*}

Consider the no impulse strategy $\tau^*_i = \infty$ and let $X^0 = X^{t_0, x_0,\infty, 0}$. Define the stopping time $\tau$ as before, i.e, \begin{eqnarray*}
\tau = \inf \{ t \in [t_0, T] : X_t \notin B(x_0, \theta) \times [t_0, t_0 + \theta) \} \wedge T.
\end{eqnarray*}
Then for any strategy $(\tau_i, \xi_i) \in {\cal V}$,
\begin{eqnarray*}
&&J[t_0, x_0, \tau_i, \xi_i]\\
&=&\mathbb{E} \left[ \int_{t_0}^{\tau_1 \wedge \tau} f(s, X^{t_0, x_0, \tau_i, \xi_i}_s) ds + B(\tau_1, \xi_1) 1_{\{\tau_1 \leq \tau \wedge \tau_1 \}} + J[\tau_1 \wedge \tau, X^{t_0, x_0, \tau_i, \xi_i}_{\tau_1 \wedge \tau}, \tau_i, \xi_i] \right]\\
&\geq &\mathbb{E} \left[\int_{t_0}^{\tau_1 \wedge \tau} f(s, X^{t_0, x_0, \tau_i, \xi_i}_s) ds  + 1_{\{\tau_1 \leq \tau \}} (B(\tau_1, \xi_1)+V(X^{t_0, x_0, \tau_i, \xi_i}_{\tau_1}, \tau_1))\right]\\
&& +\mathbb{E} \left[1_{\{\tau_1 >\tau \}} V(X^{t_0, x_0, \tau_i, \xi_i}_{\tau}, \tau)\right]\\
&\geq &\mathbb{E} \left[\int_{t_0}^{\tau_1 \wedge \tau} f(s, X^0_s) ds  + 1_{\{\tau_1 \leq \tau \}} MV(X^0_{\tau_1},  \tau_1)\right]\\
&& +\mathbb{E} \left[1_{\{\tau_1 >\tau \}} V(X^0_{\tau}, \tau)\right]\\
&\geq &\mathbb{E} \left[\int_{t_0}^{\tau_1 \wedge \tau} f(s, X^0_s) ds  +V(X^0_{\tau_1 \wedge \tau},  \tau_1 \wedge \tau)\right] + \epsilon \cdot \mathbb{P} (\tau_1 \leq \tau)\\
&\geq &V(t_0, x_0)+ \epsilon \cdot \mathbb{P} (\tau_1 \leq \tau).
\end{eqnarray*}
Therefore, without loss of generality, we only need to consider $(\tau_i, \xi_i)\in {\cal V}$ such that $\tau_1 > \tau$. Now, the Dynamic Programming Principle becomes,
\begin{eqnarray*}
u(x_0, t_0) = \mathbb{E} \left[ \int_{t_0}^\tau f(X^0_s, s) ds + V(X^0_\tau, \tau) \right].
\end{eqnarray*}

Now combining these facts above,
\begin{eqnarray*}
&&\mathbb{E} \left[ V (X^0_\tau, \tau) \right] - \mathbb{E} \left[ \int_{t_0}^\tau (\phi_t - L\phi + I^1_\theta [\phi] + I^2_\theta[\phi]) (X^0_{s-}, s) ds \right] \\
&\geq &\mathbb{E} \left[ \phi (X^0_\tau, \tau) \right] - \mathbb{E} \left[ \int_{t_0}^\tau (\phi_t - L\phi + I^1_\theta [\phi] + I^2_\theta[\phi]) (X^0_{s-}, s) ds \right] \\
&= &\phi(x_0, t_0) = V(x_0, t_0) =\mathbb{E} \left[ \int_{t_0}^\tau f(X^0_s, s) ds + V(X^0_\tau, \tau) \right]\\
&&\mathbb{E} \left[ \int_{t_0}^\tau (-\phi_t + L\phi - I^1_\theta[\phi] -I^2_\theta[\phi] - f ) (X^0_{s-}, s) ds\right] \geq 0.
\end{eqnarray*}
Again by modifying the value of $\phi$ outside of $B(x_0, \theta) \times [t_0, t_0 + \theta)$, and since $V \geq \phi$ in $B(x_0, \theta) \times [t_0, t_0 + \theta)$, we can take a sequence of $\phi_k \leq u$ dominated by multiples of $|V|$ such that it converges to $V$ outside of $B(x_0, \theta) \times [t_0, t_0 + \theta)$ from above. By the dominated convergence theorem, $I^2_\theta[\phi]$ converges to $I^2_\theta[V]$. Hence
\begin{eqnarray*}
\mathbb{E} \left[ \int_{t_0}^\tau (-\phi_t + L\phi - I^1_\theta[\phi] -I^2_\theta[V] - f ) (X^0_{s-}, s) ds\right] \geq 0,
\end{eqnarray*}
 which contradicts the assumption that $(-\phi_t + L\phi - I^1_\theta[\phi] -I^2_\theta[V] - f )(x_0, t_0)< 0$. Therefore, $\max \{-\phi_t + L\phi - I^1_\theta[\phi] -I^2_\theta[V] - f, V - MV \}(x_0, t_0) \geq 0$.
\end{proof}



\section{Regularity of the Value Function}

 To study the regularity of the value function, we will consider the time-inverted value function $u(x, t) = V(x, T-t)$. Accordingly, we will assume that $a_{ij}$, $b_i$, $f$, $B$ and $j$ are all time-inverted. This is to be  consistent with the standard PDE literature for easy references to some of its classical results, where the value is specified at the initial time instead of the terminal time.

The regularity study is built in two phases.

First in Section \ref{secnojumps}, we focus on the case without jumps . We will construct a unique $W^{(2,1),p}_{loc}$ regular viscosity solution to a corresponding equation without the integro-differential operator on  a fixed bounded domain $Q_T$ with $Q_T = B(0, R) \times (\delta, T]$ for $R > 0$ and $\delta>0$,
\begin{eqnarray}
\left\{
\begin{array}{ll}
\max \{u_t + Lu - f, u - \Psi \} = 0 &\mbox{in } Q_T,\\
u(t, x) = \phi &\mbox{on } \partial_P Q_T.
\end{array}
\right.
\end{eqnarray}
in which $\phi(x, t) = V(x, T-t)$ and $\Psi (x, t)=( M u)(x, t)$.
The local uniqueness of the viscosity solution then implies that this solution must be the time-inverted value function, hence the $W^{(2, 1), p}_{loc}$ smoothness for the value function.


Then in Section \ref{regsec1}, we extend the analysis to the case with a first-order jump and establish the regularity property  of the value function.

\subsection{$W^{(2,1),p}_{loc}$ Regularity for cases without jumps}\label{secnojumps}

The key idea is to study a corresponding homogenous HJB, based on the following classical result in PDEs.

\begin{lemma} (Theorem 4.9, 5.9, 5.10, and 6.33 of \cite{Lieberman96}) \label{LinearThm}
Let $\alpha \in (0, 1]$. Assume that $a_{ij}, b_i, f \in C^{0 + \alpha, 0 + \frac{\alpha}{2}}(\overline{Q_T})$, $a_{ij}$ is uniformly elliptic, $\phi \in C^{0 + \alpha, 0 + \frac{\alpha}{2}} (\partial_P Q_T)$. Then the linear PDE
\begin{eqnarray}\label{Linear}
\left\{
\begin{array}{ll}
u_t + Lu = f &\mbox{in } Q_T;\\
u = \phi &\mbox{on }  \partial_P Q_T.
\end{array}
\right.
\end{eqnarray}
has a unique solution to (\ref{Linear}) that lies in $C^{0+\alpha, 0+\frac{\alpha}{2}}(\overline{Q_T}) \cap C_{loc}^{2+\alpha, 1+\frac{\alpha}{2}}( Q_T)$.
\end{lemma}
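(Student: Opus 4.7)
The statement is a packaging of classical Schauder theory for linear parabolic operators, so the plan is to split the argument into uniqueness, an a priori estimate, and existence via the method of continuity, with a separate boundary regularity argument to get Hölder continuity up to $\partial_P Q_T$.

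First I would dispatch \emph{uniqueness}. Given two solutions $u_1,u_2$, the difference $w=u_1-u_2$ solves the homogeneous linear problem with zero parabolic boundary data. After an exponential change $v=e^{\lambda t}w$ with $\lambda$ large enough to ensure that the transformed zero-order coefficient is nonnegative, the parabolic weak maximum principle (applied to a bounded domain) forces $v\equiv 0$ and hence $w\equiv 0$.

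For \emph{existence} I would run the continuity method. Consider the family of operators $L_s=sL+(1-s)(-\Delta)$ for $s\in[0,1]$, and let $S\subset[0,1]$ be the set of parameters for which (\ref{Linear}) with $L$ replaced by $L_s$ is solvable in $C^{2+\alpha,1+\alpha/2}_{\mathrm{loc}}(Q_T)\cap C^{0+\alpha,0+\alpha/2}(\overline{Q_T})$ for every admissible right-hand side and boundary datum. Openness of $S$ follows from the inverse function theorem in the Banach space $C^{2+\alpha,1+\alpha/2}$; closedness follows from the interior Schauder a priori estimate (Lieberman's Theorem 4.9)
\begin{equation*}
\|u\|_{C^{2+\alpha,1+\alpha/2}(K)}\le C_K\bigl(\|f\|_{C^{0+\alpha,0+\alpha/2}(\overline{Q_T})}+\|u\|_{L^\infty(Q_T)}\bigr)
\end{equation*}
for each compact $K\subset Q_T$, combined with the $L^\infty$ bound coming from the maximum principle and the sup-norm of $\phi$. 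Since $s=0$ (the heat equation) is well known to lie in $S$, we conclude $1\in S$.

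The main obstacle, and the reason all four of Lieberman's theorems are cited rather than just Theorem 4.9, is the boundary behavior. Because $\phi$ is only assumed to be $C^{0+\alpha,0+\alpha/2}$ on $\partial_P Q_T$, one cannot expect second-order Schauder regularity up to the boundary, and at the corner $\partial B(0,R)\times\{\delta\}$ where the lateral and initial boundary meet the parabolic geometry requires genuine care. The transfer of Hölder regularity from the datum $\phi$ to the solution on all of $\overline{Q_T}$ is exactly the content of Theorems 5.9, 5.10 and 6.33 of \cite{Lieberman96}: barrier functions adapted to the cylinder produce a modulus of continuity at every boundary point (including the corner), yielding $u\in C^{0+\alpha,0+\alpha/2}(\overline{Q_T})$. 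Interior Schauder regularity on compact subsets of $Q_T$ then follows by a standard localization argument. Together with the uniqueness step, this gives the unique $C^{0+\alpha,0+\alpha/2}(\overline{Q_T})\cap C^{2+\alpha,1+\alpha/2}_{\mathrm{loc}}(Q_T)$ solution asserted in the lemma.
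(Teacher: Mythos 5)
The paper offers no proof of this lemma at all: it is stated purely as a citation of Theorems 4.9, 5.9, 5.10 and 6.33 of Lieberman's book, and the authors rely on those results as a black box. Your proposal therefore does something the paper does not attempt, namely reconstruct the classical Schauder-theory argument, and as an outline it is essentially the proof that lives inside the cited reference: maximum principle for uniqueness, continuity method with interior Schauder estimates for existence, and barriers for boundary regularity. Two points deserve tightening if you wanted this to stand as an actual proof rather than a roadmap. First, the continuity method requires a Banach space in which the a priori estimate is uniform; the space $C^{2+\alpha,1+\alpha/2}_{loc}(Q_T)\cap C^{0+\alpha,0+\alpha/2}(\overline{Q_T})$ you work in is not the natural one, and the cleaner route (which is what Lieberman's Theorems 5.9--5.10 actually implement) is to solve first for smooth approximating boundary data with global Schauder theory, then pass to the limit using the interior estimate together with uniform barrier bounds at $\partial_P Q_T$; without the barrier step the closedness argument does not guarantee that the limit attains the boundary values. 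Second, a ``modulus of continuity at every boundary point'' is weaker than membership in $C^{0+\alpha,0+\alpha/2}(\overline{Q_T})$: to preserve the Hölder exponent up to the boundary one needs the quantitative Hölder barrier estimate (the content of Theorem 6.33), combined with interior estimates, not merely continuity up to the closure. With those two refinements your sketch matches the standard proof; the paper itself simply outsources all of it to the literature.
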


Indeed, given Lemma \ref{LinearThm},  let $u_0$ be the unique classical solution to (\ref{Linear}), with the boundary condition $\phi(x, t) = V(T-t, x)$. Then, our earlier analysis (Lemma \ref{Holder}) of H$\ddot o$lder continuity for the value function   implies that   $V(x, T-t)-u_0(x, t)$ solves the following ``homogenous'' HJB,
\begin{eqnarray*}
\left\{
\begin{array}{ll}
\max \{ u_t + Lu , u- \overline \Psi \} = 0  &\mbox{in } Q_T,\\
u = 0 &\mbox{on } \partial_P Q_T.
\end{array}
\right.
\end{eqnarray*}
for $\overline \Psi = \Psi - u_0$. Since $V \leq MV$, we have $\overline \Psi (x, t) = (\Psi - u_0)(x, t) = (MV - V)(x, T-t) \geq 0$ on $\partial_P Q_T$.

Therefore, our first step is to study the above ``homogenous'' HJB.

{\em Step I: Viscosity solution of the ``homogenous'' HJB}

\begin{theorem}\label{thoeremcorrect}
Assume
\begin{enumerate}
\item
$a_{ij}, b_i, \overline \Psi \in C^{0 + \alpha, 0 + \frac{\alpha}{2}}(\overline{Q_T})$,
\item
$(a_{ij})$ uniformly elliptic,
\item
$\overline \Psi$ is semiconcave,
\item
$\overline \Psi_t$ is bounded below, in the distributional sense.
\end{enumerate}
Then there exists a  viscosity solution $u \in W^{(2,1), p}(Q_T)$ to the homogenous HJB
\begin{eqnarray}\label{HJBhomogenous}
\left\{
\begin{array}{ll}
\max \{ u_t + Lu , u - \overline \Psi \} = 0 &\mbox{in } Q_T,\\
u = 0 &\mbox{on }  \partial_P Q_T.
\end{array}
\right.
\end{eqnarray}
In fact, $u \in W^{(2,1), p}(Q_T)$ for any $p > 1$.
\end{theorem}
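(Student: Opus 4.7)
The plan is to construct $u$ by penalization. Choose a smooth, nondecreasing, convex penalty $\beta_{\epsilon}:\mathbb{R}\to\mathbb{R}$ with $\beta_{\epsilon}(s)=0$ for $s\le 0$ and $\beta_{\epsilon}(s)\to\infty$ as $\epsilon\to 0^+$ for every $s>0$ (for instance, a smoothing of $s\mapsto \epsilon^{-1}(s_+)^{2}$). Consider the penalized semilinear parabolic equation
$$u^{\epsilon}_t + Lu^{\epsilon} + \beta_{\epsilon}(u^{\epsilon}-\overline{\Psi}) = 0 \text{ in } Q_T, \qquad u^{\epsilon}=0 \text{ on } \partial_P Q_T.$$
Using Lemma \ref{LinearThm} as a building block, together with a monotone iteration (or Schauder fixed point in $C^{0+\alpha,0+\alpha/2}$), this equation admits a classical solution $u^{\epsilon}\in C^{2+\alpha,1+\alpha/2}_{loc}(Q_T)\cap C(\overline{Q_T})$. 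A direct comparison with the constants $0$ and $\sup_{\overline{Q_T}}\overline{\Psi}$ via the maximum principle produces a uniform $L^\infty$ bound on $u^{\epsilon}$.

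The crucial step is a uniform $L^\infty$ bound on $\beta_{\epsilon}(u^{\epsilon}-\overline{\Psi})$. Regularize $\overline{\Psi}$ by convolution to obtain smooth $\overline{\Psi}^{\delta}$, and set $w^{\epsilon,\delta}=u^{\epsilon}-\overline{\Psi}^{\delta}$. Then
$$w^{\epsilon,\delta}_t + Lw^{\epsilon,\delta} + \beta_{\epsilon}(u^{\epsilon}-\overline{\Psi}) = -\overline{\Psi}^{\delta}_t - L\overline{\Psi}^{\delta}.$$
The semi-concavity hypothesis gives $D^{2}\overline{\Psi}\le C\,I$ distributionally, which is preserved under convolution with a nonnegative mollifier, so $\mathrm{tr}[A\,D^{2}\overline{\Psi}^{\delta}]\le C$ uniformly in $\delta$; similarly the distributional one-sided bound on $\overline{\Psi}_t$ yields $-\overline{\Psi}^{\delta}_t\le C$ uniformly. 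Semi-concavity also forces $\overline{\Psi}$ to be locally Lipschitz, so $|b\cdot D\overline{\Psi}^{\delta}|$ is uniformly bounded. Hence the right-hand side above is bounded above uniformly in $\delta$, and applying the parabolic maximum principle at an interior maximum of $w^{\epsilon,\delta}$ yields $\beta_{\epsilon}(u^{\epsilon}-\overline{\Psi})\le C$; the boundary condition $\overline{\Psi}\ge 0$ on $\partial_P Q_T$ (where $u^{\epsilon}=0$) ensures that the maximum is attained in the interior. Letting $\delta\to 0$ preserves the bound.

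With $\beta_{\epsilon}(u^{\epsilon}-\overline{\Psi})$ uniformly bounded in $L^\infty$, view the penalized equation as a linear parabolic PDE with bounded forcing. Interior parabolic Calder\'on--Zygmund $L^{p}$ estimates (Chapter 7 of \cite{Lieberman96}) yield, for every $p\in(1,\infty)$ and every $Q'\subset\subset Q_T$, a uniform bound $\|u^{\epsilon}\|_{W^{(2,1),p}(Q')}\le C_{p,Q'}$. A diagonal extraction gives $u^{\epsilon_n}\rightharpoonup u$ weakly in $W^{(2,1),p}_{loc}(Q_T)$ and uniformly on compact subsets by Sobolev embedding. The limit $u$ satisfies $u\le\overline{\Psi}$ (otherwise $\beta_{\epsilon_n}(u^{\epsilon_n}-\overline{\Psi})$ would diverge), $u=0$ on $\partial_P Q_T$, and $u_t+Lu=0$ a.e.\ on $\{u<\overline{\Psi}\}$ since $\beta_{\epsilon}(u^{\epsilon}-\overline{\Psi})$ vanishes locally there for small $\epsilon$. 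A routine test-function argument confirms $u$ is a viscosity solution of \eqref{HJBhomogenous}.

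The main obstacle is the uniform bound on $\beta_{\epsilon}(u^{\epsilon}-\overline{\Psi})$, because $\overline{\Psi}$ is only H\"older with semi-concavity and a one-sided distributional bound on $\overline{\Psi}_t$, so neither $L\overline{\Psi}$ nor $\overline{\Psi}_t$ exists classically. The mollification argument sketched above is what makes this work: the one-sided nature of both hypotheses is precisely what survives convolution with a nonnegative kernel, and it is exactly what is needed to push the penalty term below the maximum-principle threshold.
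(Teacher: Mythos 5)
Your proposal is correct and follows essentially the same route as the paper: penalize, obtain a uniform bound on $\beta_\epsilon(u^\epsilon-\overline\Psi)$ via the maximum principle after mollifying the semiconcave obstacle (using that the one-sided bounds on $D^2\overline\Psi$ and $\overline\Psi_t$ survive convolution, and that $\overline\Psi\ge 0$ on $\partial_P Q_T$ handles a boundary maximum), then apply $L^p$ estimates, extract a weak limit, and pass to the limit in the viscosity sense. The only discrepancy is that you invoke interior Calder\'on--Zygmund estimates and thus conclude $u\in W^{(2,1),p}_{loc}(Q_T)$ rather than the stated global $W^{(2,1),p}(Q_T)$; since $u^\epsilon=0$ on $\partial_P Q_T$ and the cylinder is smooth, the boundary $L^p$ estimate (which the paper uses) upgrades this immediately.
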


To prove this theorem, we  first  consider a corresponding penalized version.
For every $\epsilon > 0$, let $\beta_\epsilon: \mathbb{R} \to \mathbb{R}$ be a smooth function such that $\beta_\epsilon (x) \geq  -1$, $\beta_\epsilon (0) = 0$, $\beta' > 0$, $\beta'' \geq 0$, $\beta_\epsilon '(x) \leq C/\epsilon$ for $x \geq 0$, $\beta_\epsilon'(0) = 1/\epsilon$ and as $\epsilon \to 0$, $\beta_\epsilon(x) \to \infty$ for $x >0$, $\beta_\epsilon (x) \to 0$ for $x<0$. One such example is, $\beta(x) = x/ \epsilon$ for $x \geq 0$ and it smooth extension to $x < 0$.
We see that there is a classical solution $u$ to the penalized problem, assuming some regularity on the coefficients $a^{ij}, b^i, \overline \Psi$.
\begin{lemma}\label{theorempenalized}
Fix $\epsilon>0$. Suppose that $a^{ij}, b^i, \overline \Psi \in C^{2+\alpha, 1+\alpha/2} (\overline Q_T)$, and $(a^{ij})$ is uniformly elliptic. Then exists a unique $u \in C^{4+\alpha, 2+\alpha/2} (\overline Q_T)$ such that
\begin{eqnarray}\label{penalized}
\left\{
\begin{array}{ll}
u_t + Lu + \beta_\epsilon (u - \overline \Psi)= 0&\mbox{on  } Q_T,\\
u = 0 &\mbox{on  } \partial_P Q_T.
\end{array}
\right.
\end{eqnarray}
\end{lemma}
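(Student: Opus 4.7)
The plan is to treat equation~\eqref{penalized} as a semilinear parabolic PDE with smooth, monotone nonlinearity $\beta_\epsilon(\cdot-\overline\Psi)$, and combine a priori maximum-principle estimates with the linear Schauder theory of Lemma~\ref{LinearThm}. The argument proceeds in four stages: a uniform $L^\infty$ bound along a homotopy, existence of a $C^{2+\alpha,1+\alpha/2}$ solution via Leray--Schauder, uniqueness from the monotonicity $\beta_\epsilon'\ge 0$, and a bootstrap to $C^{4+\alpha,2+\alpha/2}$.

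First, for the a priori bound, I consider the family $u_t+Lu+\lambda\beta_\epsilon(u-\overline\Psi)=0$ on $Q_T$ with $u=0$ on $\partial_P Q_T$ and $\lambda\in[0,1]$. Since $\beta_\epsilon\ge-1$, after replacing $u$ by $u-C(t-\delta)$ for a suitable constant $C$, the parabolic maximum principle yields an upper bound $u\le C$ independent of $\lambda$. For the lower bound, at an interior minimum $(x_0,t_0)$ the inequalities $u_t\le 0$, $Du=0$, $D^2 u\ge 0$ combined with the equation give $-\lambda\beta_\epsilon(u-\overline\Psi)\le r\,u(x_0,t_0)$; since $\beta_\epsilon\le 0$ on $(-\infty,0]$, this forces either $u(x_0,t_0)\ge 0$ or $u(x_0,t_0)>\overline\Psi(x_0,t_0)$, yielding $u\ge\min(0,\inf_{Q_T}\overline\Psi)$ everywhere on $\overline{Q_T}$.

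Next, for existence, I apply Schaefer's fixed point theorem to the operator $S:C^{0+\alpha,0+\alpha/2}(\overline{Q_T})\to C^{0+\alpha,0+\alpha/2}(\overline{Q_T})$ defined by $Sv=u$, where $u$ is the unique solution furnished by Lemma~\ref{LinearThm} of
\begin{equation*}
u_t+Lu=-\beta_\epsilon(v-\overline\Psi)\ \mbox{in}\ Q_T,\qquad u=0\ \mbox{on}\ \partial_P Q_T.
\end{equation*}
Smoothness of $\beta_\epsilon$ and $\overline\Psi\in C^{2+\alpha,1+\alpha/2}$ ensure that the right-hand side is in $C^{0+\alpha,0+\alpha/2}$ whenever $v$ is; Lemma~\ref{LinearThm} then returns $u\in C^{2+\alpha,1+\alpha/2}$, and composition with the compact embedding $C^{2+\alpha,1+\alpha/2}\hookrightarrow C^{0+\alpha,0+\alpha/2}$ makes $S$ continuous and compact. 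Combining the uniform $L^\infty$ bound from the previous step with the Schauder estimate of Lemma~\ref{LinearThm} produces a uniform $C^{0+\alpha,0+\alpha/2}$ bound on the set $\{u:u=\lambda Su,\ \lambda\in[0,1]\}$, so Schaefer's theorem delivers a fixed point $u\in C^{2+\alpha,1+\alpha/2}(\overline{Q_T})$ solving \eqref{penalized}.

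Uniqueness is immediate from monotonicity: the difference $w=u_1-u_2$ of two solutions satisfies $w_t+Lw+c(x,t)w=0$ with $c(x,t)=\int_0^1\beta_\epsilon'\bigl(\theta(u_1-\overline\Psi)+(1-\theta)(u_2-\overline\Psi)\bigr)d\theta\ge 0$ and $w=0$ on $\partial_P Q_T$, so the standard maximum principle for linear parabolic equations with nonnegative zero-order term forces $w\equiv 0$. Bootstrapping from $C^{2+\alpha,1+\alpha/2}$ to the claimed $C^{4+\alpha,2+\alpha/2}$ is routine: since $u\in C^{2+\alpha,1+\alpha/2}$ and $\overline\Psi,a^{ij},b^i\in C^{2+\alpha,1+\alpha/2}$, smoothness of $\beta_\epsilon$ makes $\beta_\epsilon(u-\overline\Psi)\in C^{2+\alpha,1+\alpha/2}$, and the higher-order Schauder estimates (Ch.~4--6 of \cite{Lieberman96}) applied to the linear equation $u_t+Lu=-\beta_\epsilon(u-\overline\Psi)$ give the claimed regularity. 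I expect the main obstacle to be securing the a priori bound along the homotopy; this is precisely where the one-sided lower bound $\beta_\epsilon\ge -1$ together with the monotonicity of $\beta_\epsilon$ are essential.
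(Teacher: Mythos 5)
Your proof is correct and follows essentially the same route as the paper's: Schaefer's fixed point theorem combined with the Schauder estimates of Lemma~\ref{LinearThm}, a uniform $L^\infty$ bound along the homotopy obtained from the maximum principle and the structure of $\beta_\epsilon$, and a Schauder bootstrap to $C^{4+\alpha,2+\alpha/2}(\overline Q_T)$. The one genuine addition is that you supply the uniqueness argument (writing the difference of two solutions as a solution of a linear equation with nonnegative zero-order coefficient via $\beta_\epsilon'\ge 0$), a step the paper's Appendix~B proof omits even though the lemma asserts uniqueness.
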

Note that Friedman \cite{Friedman82} proved under different assumptions for a $W^{2,p}$ solution for the elliptic case using the $L^p$ estimates. He then used the H$\ddot o$lder estimates to bootstrap for the $C^2$ regularity. Our proof  uses  the Schauder estimates. (For details, see Appendix B).

Next, consider the case with $C^{\alpha, \alpha/2}(\overline Q_T)$ coefficients. We will smooth out the coefficients first to the above  result, and then let $\epsilon \to 0$. More precisely,
let $(a^\epsilon)^{ij}, (b^\epsilon)^i, \overline \Psi^\epsilon \in C^\infty(\overline Q_T)$ be such that they converge to the respective function in $C^{\alpha, \alpha/2}(\overline Q_T)$ and  $\overline \Psi^\epsilon \geq 0$ on $\partial_P Q_T$. This is possible because $\overline \Psi \geq 0$ on $\partial_P Q_T$. Define $L^\epsilon$ to be the corresponding linear operator and  $u^\epsilon$ to be the unique solution to
\begin{eqnarray}
\left\{
\begin{array}{ll}
u^\epsilon_t + L^\epsilon u^\epsilon + \beta_\epsilon (u^\epsilon - \overline \Psi^\epsilon)= 0&\mbox{on  } Q_T,\\
u^\epsilon = 0 &\mbox{on  } \partial_P Q_T.
\end{array}
\right.
\end{eqnarray}

Now we establish some bound for $\beta_\epsilon (u^\epsilon - \overline \Psi^\epsilon)$, in order to apply an $L^p$ estimate.
\begin{lemma}\label{boundedbeta}
Assuming $\overline \Psi$ is semiconcave in $x$, i.e.
\begin{eqnarray}
\frac{\partial^2 \overline \Psi}{\partial \xi^2} &\leq C,
\end{eqnarray}
for any direction $|\xi| = 1$, and
\begin{eqnarray}
\frac{\partial \overline \Psi}{\partial t} &\geq - C,
\end{eqnarray}
where both derivatives are interpreted in the distributional sense. We  have
\begin{eqnarray}
|\beta_\epsilon (u^\epsilon - \overline \Psi^\epsilon)| \leq C,
\end{eqnarray}
with $C$ independent of $\epsilon$.
\end{lemma}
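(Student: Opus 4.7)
The plan is to bound $\beta_\epsilon(u^\epsilon - \overline \Psi^\epsilon)$ above and below separately. The lower bound is immediate: by construction $\beta_\epsilon \geq -1$, so $\beta_\epsilon(u^\epsilon - \overline \Psi^\epsilon) \geq -1$ pointwise, independently of $\epsilon$.

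For the upper bound, I would set $\phi = u^\epsilon - \overline \Psi^\epsilon$ and subtract $\overline \Psi^\epsilon_t + L \overline \Psi^\epsilon$ from both sides of (\ref{penalized}) to obtain
\begin{equation*}
\phi_t + L \phi + \beta_\epsilon(\phi) = h(x, t), \qquad h := -\overline \Psi^\epsilon_t - L \overline \Psi^\epsilon,
\end{equation*}
on $Q_T$. Expanding, $h = -\overline \Psi^\epsilon_t + tr(A D^2 \overline \Psi^\epsilon) + b \cdot D \overline \Psi^\epsilon - r \overline \Psi^\epsilon$. I would arrange the smoothings $\overline \Psi^\epsilon$ so that the semiconcavity bound $\partial_\xi^2 \overline \Psi^\epsilon \leq C$ and the one-sided time bound $\overline \Psi^\epsilon_t \geq -C$ are inherited in the classical sense with constants independent of $\epsilon$. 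Combined with the uniform boundedness of $A, b$ and $\overline \Psi^\epsilon$ on $\overline{Q_T}$ (and the fact that a bounded semiconcave function on a bounded domain is Lipschitz), this yields $h \leq C$ on $\overline{Q_T}$, uniformly in $\epsilon$. Now I apply the maximum principle to $\phi$. Since $u^\epsilon = 0$ on $\partial_P Q_T$ and $\overline \Psi^\epsilon \geq 0$ there by construction, $\phi \leq 0$ on $\partial_P Q_T$. If $\phi \leq 0$ everywhere, then $\beta_\epsilon(\phi) \leq \beta_\epsilon(0) = 0$ and the bound holds trivially. Otherwise $\phi$ attains a positive maximum at some interior point $(x_0, t_0) \in B(0, R) \times (\delta, T]$ where $\phi_t(x_0, t_0) \geq 0$, $D\phi(x_0, t_0) = 0$, and $D^2 \phi(x_0, t_0) \leq 0$ in the matrix sense, so that $L\phi(x_0, t_0) \geq 0$ (assuming the standard discount $r \geq 0$; otherwise renormalize $\phi$ by $e^{-\lambda t}$ for $\lambda$ large). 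Reading off the PDE at $(x_0, t_0)$ gives $\beta_\epsilon(\phi(x_0, t_0)) \leq h(x_0, t_0) \leq C$, and by monotonicity of $\beta_\epsilon$ together with $\phi \leq \phi(x_0, t_0)$ everywhere, the bound extends to all of $\overline{Q_T}$ with $C$ independent of $\epsilon$.

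The main obstacle is verifying that the distributional bounds $\partial_\xi^2 \overline \Psi \leq C$ and $\overline \Psi_t \geq -C$ on the rough data $\overline \Psi$ survive the smoothing with the same constants, since the argument above crucially uses them in the classical sense. This is a routine property of convolution mollifiers: if $T \leq C$ as a distribution in the sense that $\langle T, \varphi \rangle \leq C \int \varphi$ for every nonnegative test function $\varphi$, then pointwise $(T * \eta_\epsilon)(x) = \langle T, \eta_\epsilon(x - \cdot) \rangle \leq C$, and the same reasoning yields the lower bound on $\partial_t(\overline \Psi * \eta_\epsilon)$. Building this preservation property, together with the boundary condition $\overline \Psi^\epsilon \geq 0$ on $\partial_P Q_T$, into the construction of $\overline \Psi^\epsilon$ makes the uniform-in-$\epsilon$ control of $h$, and thus of $\beta_\epsilon(u^\epsilon - \overline \Psi^\epsilon)$, automatic.
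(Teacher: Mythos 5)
Your proposal is correct and follows essentially the same route as the paper: the lower bound from $\beta_\epsilon \geq -1$, the upper bound via the maximum principle applied to $u^\epsilon - \overline\Psi^\epsilon$ at an interior maximum combined with $\overline\Psi^\epsilon_t + L^\epsilon\overline\Psi^\epsilon \geq -C$ (inherited from the distributional bounds through mollification), and the boundary case handled by $\overline\Psi^\epsilon \geq 0$ on $\partial_P Q_T$. Your explicit justification that one-sided distributional bounds survive convolution with the same constants is a welcome elaboration of a step the paper states without proof.
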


\begin{proof}
Clearly $\beta_\epsilon \geq -1$, so we only need to give an upper bound. The assumption above translates to the same derivative condition on mollified $\overline \Psi^\epsilon$, which can be interpreted classically now. Thus we have
\begin{eqnarray}
\overline \Psi^\epsilon_t + L^\epsilon \overline \Psi^\epsilon \geq -C.
\end{eqnarray}
Suppose $u^\epsilon - \overline \Psi^\epsilon$ achieves maximum at $(x_0, t_0) \in Q_T$, then
\begin{eqnarray}
(u^\epsilon - \overline \Psi^\epsilon)_t + L^\epsilon (u^\epsilon - \overline \Psi^\epsilon) (x_0, t_0)\geq 0.
\end{eqnarray}
Hence
\begin{eqnarray}
- \beta_\epsilon (u^\epsilon - \overline \Psi^\epsilon) (x_0, t_0) &=& (u^\epsilon_t + L^\epsilon u^\epsilon)  (x_0, t_0)\\
&\geq &(\overline \Psi^\epsilon_t + L^\epsilon \overline \Psi^\epsilon)(x_0, t_0) \geq -C,
\end{eqnarray}
in which $C$ is an upper bound independent of $\epsilon$. On the other hand, if it achieves maximum on $\partial_P Q_T$, we get $u^\epsilon - \overline \Psi^\epsilon \leq 0$ since $\overline \Psi^\epsilon \geq 0$ on $\partial_P Q_T$. Either way we have an upper bound independent of $\epsilon$.
\end{proof}

Now with this estimate  of the boundedness of $\beta_\epsilon (u^\epsilon - \overline \Psi^\epsilon)$,
 we are ready to prove Theorem \ref{thoeremcorrect}.

\begin{proof}
Lemma \ref{boundedbeta} allows us to apply $L^p$ estimate:
\begin{eqnarray}
\|u^\epsilon \|_{W^{(2,1),p}(Q_T)} \leq C \| \beta_\epsilon (u^\epsilon - \overline \Psi^\epsilon) \|_{L^p (Q_T)} \leq C,
\end{eqnarray}
for $p > 1$. Thus there exists a sequence $\epsilon_n \to 0$ and $u \in W^{(2,1),p}(Q_T)$ such that
\begin{eqnarray}
u^{\epsilon_n} \rightharpoonup u
\end{eqnarray}
weakly in $W^{(2,1),p}(Q_T)$. For $p$ large enough, there exists $\alpha' > 0$ such that $u^\epsilon \to u$ in $C^{\alpha', \alpha'/2}(\overline Q_T)$, so $u^\epsilon \to u$ uniformly in $\overline Q_T$.

 On one hand, since $\beta_\epsilon (u^\epsilon - \overline \Psi^\epsilon) \leq C$, yet $\beta_\epsilon (x) \to \infty$ as $x>0$, hence $u \leq \overline \Psi$. Suppose $u - \phi$ achieves a strict local maximum at $(x_0, t_0)$, then $u^\epsilon - \phi$ achieves a strict local maximum at $(x^\epsilon_0, t^\epsilon_0)$ and $ (x^\epsilon_0, t^\epsilon_0) \to (x_0, t_0)$ as $\epsilon \to 0$, then
\begin{eqnarray*}
\lim_{\epsilon \to 0}(\phi_t + L^\epsilon \phi) (x^\epsilon_0, t^\epsilon_0) \leq \liminf_{\epsilon \to 0} \beta_\epsilon (u^\epsilon (x^\epsilon_0, t^\epsilon_0) - \overline \Psi (x^\epsilon_0, t^\epsilon_0)) \leq 0.
\end{eqnarray*}
So $(\phi_t + L \phi) (x_0, t_0) \leq 0$.

On the other hand, if $u - \phi$ achieves a strict local minimum at $(x_0, t_0)$, then $u^\epsilon - \phi$ achieves a strict local maximum at $(x^\epsilon_0, t^\epsilon_0)$ and $ (x^\epsilon_0, t^\epsilon_0) \to (x_0, t_0)$ as $\epsilon \to 0$. If $u(x_0, t_0) < \overline \Psi (x_0, t_0)$, then for small $\epsilon$, $u(x^\epsilon_0, t^\epsilon_0) <  \overline \Psi (x^\epsilon_0, t^\epsilon_0)$,
\begin{eqnarray*}
\lim_{\epsilon \to 0}(\phi_t + L^\epsilon \phi) (x^\epsilon_0, t^\epsilon_0) \geq \limsup_{\epsilon \to 0} \beta_\epsilon (u^\epsilon (x^\epsilon_0, t^\epsilon_0) - \overline \Psi (x^\epsilon_0, t^\epsilon_0)) \geq 0.
\end{eqnarray*}
\end{proof}

{\em Step II: Uniqueness of the HJB equation without  jump terms}

\begin{proposition}\label{unique}
 Assuming that $a_{ij}, b_i, f, \Psi, f$ are continuous in $\overline Q_T$, and $\phi$ continuous on $\partial_P Q_T$, the viscosity solution to the following HJB equation is unique.
\begin{eqnarray}\label{boundedvis}
\left\{
\begin{array}{ll}
\max \{u_t + Lu - f, u - \Psi \} = 0 &\mbox{in } Q_T,\\
u(t, x) = \phi &\mbox{on } \partial_P Q_T.
\end{array}
\right.
\end{eqnarray}
\end{proposition}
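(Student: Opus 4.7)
The plan is to deduce uniqueness from a stronger comparison principle: any upper semicontinuous subsolution $u$ and lower semicontinuous supersolution $v$ of (\ref{boundedvis}) satisfying $u \leq v$ on $\partial_P Q_T$ must satisfy $u \leq v$ throughout $\overline{Q_T}$. Uniqueness among viscosity solutions then follows by applying this to any two solutions in both directions. As a preliminary reduction I would replace $(u,v)$ by $(\tilde u, \tilde v) = (e^{-\lambda t} u, e^{-\lambda t} v)$ for $\lambda$ large enough; this introduces a strictly positive zero-order coefficient $\lambda$ into the transformed operator without disturbing the obstacle structure (since $\tilde u \leq e^{-\lambda t}\Psi$ is equivalent to $u \leq \Psi$), and preserves the Lipschitz continuity in $x$ supplied by Assumption~\ref{assumpLip}.

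I would then argue by contradiction using the classical doubling of variables. Assume $M := \sup_{\overline{Q_T}}(u - v) > 0$, and consider, for small $\eta > 0$,
\[
\Phi_\epsilon(x,t,y,s) = u(x,t) - v(y,s) - \frac{|x-y|^2 + (t-s)^2}{\epsilon^2} - \eta(t+s).
\]
Standard compactness arguments show that the supremum of $\Phi_\epsilon$ over $\overline{Q_T}^2$ is attained at some $(\bar x_\epsilon,\bar t_\epsilon,\bar y_\epsilon,\bar s_\epsilon)$, that $|\bar x_\epsilon - \bar y_\epsilon|^2/\epsilon^2 + (\bar t_\epsilon - \bar s_\epsilon)^2/\epsilon^2 \to 0$ as $\epsilon \to 0$, and that these two space-time points converge to a common interior point $(x^*,t^*) \in Q_T$ at which $(u-v)(x^*,t^*) > 0$; interiority is forced by the boundary ordering $u \leq v$ on $\partial_P Q_T$. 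At the maximizer the subsolution condition gives in particular $u(\bar x_\epsilon,\bar t_\epsilon) \leq \Psi(\bar x_\epsilon,\bar t_\epsilon)$, and the supersolution condition yields the key dichotomy: either $v(\bar y_\epsilon,\bar s_\epsilon) \geq \Psi(\bar y_\epsilon,\bar s_\epsilon)$, or the pure PDE inequality $v_t + Lv - f \geq 0$ holds in the viscosity sense at $(\bar y_\epsilon,\bar s_\epsilon)$.

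The first branch of the dichotomy is immediate: continuity of $\Psi$ together with $|\bar x_\epsilon - \bar y_\epsilon|\to 0$ gives $u(x^*,t^*) \leq \Psi(x^*,t^*) \leq v(x^*,t^*)$ in the limit, contradicting $(u-v)(x^*,t^*) > 0$. The second branch is the substantive step and is where I expect the work. I would invoke the parabolic form of the Crandall--Ishii--Lions lemma to produce $(a,p,X) \in \overline{{\cal P}}^{2,+} u(\bar x_\epsilon,\bar t_\epsilon)$ and $(b,q,Y) \in \overline{{\cal P}}^{2,-} v(\bar y_\epsilon,\bar s_\epsilon)$ with $a - b = 2\eta$, $p = q = 2(\bar x_\epsilon - \bar y_\epsilon)/\epsilon^2$, and the standard matrix inequality controlling $X - Y$ by the Hessian of the penalty. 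Substituting into the sub- and supersolution PDE inequalities and subtracting, the Lipschitz continuity of $A$ and $b$ (Assumption~\ref{assumpLip}) bounds the second- and first-order contributions by $C(|\bar x_\epsilon - \bar y_\epsilon|^2/\epsilon^2 + |\bar x_\epsilon - \bar y_\epsilon|)$, which vanishes as $\epsilon \to 0$, while the strictly positive zero-order coefficient $\lambda$ retains a term $\lambda(u(\bar x_\epsilon,\bar t_\epsilon) - v(\bar y_\epsilon,\bar s_\epsilon)) \to \lambda M > 0$. Sending $\epsilon \to 0$ and then $\eta \to 0$ produces $\lambda M \leq 0$, the required contradiction. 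The main obstacle is the bookkeeping in the Ishii matrix inequality and verifying that the quadratic second-order terms generated by the Lipschitz estimate on $\sigma$ are absorbed by the penalty $|\bar x_\epsilon - \bar y_\epsilon|^2/\epsilon^2$; once this is in place, the comparison principle and hence uniqueness follow.
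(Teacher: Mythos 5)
Your proposal is correct and follows essentially the same route as the paper: a comparison principle proved by contradiction via doubling of variables, organized around the identical dichotomy at the maximizer (the obstacle branch $v \geq \Psi$ handled by continuity of $\Psi$ together with $u \leq \Psi$, the PDE branch handled by the sub/supersolution inequalities). The paper's implementation differs only in bookkeeping --- it doubles only the spatial variable, replaces your $e^{-\lambda t}$ rescaling and $\eta(t+s)$ penalization by the perturbations $W + \epsilon/(t-\delta)$ and $-\beta(t-\delta)$, and delegates the theorem-on-sums step in the PDE branch to Theorem V.8.1 of Fleming--Soner rather than invoking the Crandall--Ishii--Lions lemma explicitly.
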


\emph{Remark.} Note that this is a local uniqueness of the viscosity solution.
We  later apply $\phi(x, t) = V(x, T-t)$ and $\Psi (x, t)=( M u)(x, t)$ to our original control problem.

\begin{proof}

Let $W, U$ be a viscosity subsolution and supersolution to (\ref{boundedvis}) respectively. Then $W$ is clearly a viscosity subsolution to $v_t + Lv - f = 0$, with $W \leq \overline \Psi$. On the other hand, at any fixed point $(x_0, t_0)$, either $U(x_0, t_0) = \overline \Psi (x_0, t_0)$ or $U$ satisfies the viscosity supersolution property at $(x_0, t_0)$.

Define
\begin{eqnarray}
W^\epsilon (x, t) = W(x, t) + \frac{\epsilon}{t-\delta}
\end{eqnarray}
for $\epsilon >0$. Note that $W^\epsilon$ is still a viscosity subsolution of $v_t+Lv - f = 0$. For fixed $\epsilon, \alpha, \beta$, define
\begin{eqnarray}
\Phi(t, x,y) = W^\epsilon (x, t) - U(x,t) - \alpha |x-y|^2 - \beta (t - \delta).
\end{eqnarray}
Denote $B = B(0, R)$. Suppose $\max_{(x, t) \in \overline Q_T} W^\epsilon (x, t) - U(x, t) \geq c >0$. There exist $\alpha_0, \beta_0, \epsilon_0$, such that for $\alpha \geq \alpha_0$, $\beta \leq \beta_0$, and $\epsilon \leq \epsilon_0$, we have
\begin{eqnarray}
\max_{(t, x, y) \in [\delta, T) \times \overline{B} \times \overline{B}} \Phi(t, x, y) \geq c/2 > 0.
\end{eqnarray}
Let $(\bar t, \bar x, \bar y) \in (\delta, T) \times B \times B$ be the point where $\Phi$ achieves the maximum. Since $\Phi(\delta, 0, 0) \leq \Phi(\bar t, \bar x, \bar y)$, we get
\begin{eqnarray}
\alpha | \bar x - \bar y |^2 \leq h( |\bar x - \bar y| ),
\end{eqnarray}
in which $h$ is the modulus of continuity of $U$. Since the domain is bounded,  $\alpha | \bar x - \bar y|^2 \leq K$ for some fixed constant $K$ independent of $\alpha, \epsilon, \beta$. We have $| \bar x - \bar y | \leq \sqrt{K/\alpha}$, which implies
\begin{eqnarray}
\alpha | \bar x - \bar y|^2 \leq \omega (\sqrt {\frac{K}{\alpha}} ).
\end{eqnarray}
Denote $\omega$ as the modulus of continuity of $\overline \Psi$. We have two cases:
\begin{enumerate}
\item $U(\bar y, \bar t) = \overline \Psi (\bar y, \bar t)$. We have
\begin{eqnarray*}
W^\epsilon (\bar x, \bar t) &\leq &\overline \Psi (\bar x, \bar t) + \frac{\epsilon}{\bar t - \delta}\\
&\leq &\omega (| \bar x - \bar y| ) + \overline \Psi (\bar y, \bar t) + \frac{\epsilon}{\bar t - \delta}\\
&= &\omega (| \bar x - \bar y| ) + U (\bar y, \bar t) + \frac{\epsilon}{\bar t - \delta}.
\end{eqnarray*}
Thus
\begin{eqnarray*}
W^\epsilon (\bar x, \bar t) - U(\bar y, \bar t) \leq \omega( | \bar x - \bar y| ) + \frac{\epsilon}{\bar t - \delta}.
\end{eqnarray*}
\item $U(\bar y, \bar t) < \overline \Psi (\bar y, \bar t)$.  By the same analysis as Theorem V.8.1 in \cite{FS06},
\begin{eqnarray}
\beta \leq \omega( \alpha |\bar x - \bar y|^2 + | \bar x - \bar y|).
\end{eqnarray}
\end{enumerate}
Fix $\epsilon \leq \epsilon_0$, $\beta \geq \beta_0$. For each $\alpha \leq \alpha_0$, one of the two cases is true. If case 2 occurs infinitely many times as $\alpha \to \infty$, we have a contradiction, thus case 1 must occur infinitely many times as $\alpha \to \infty$.  We have the inequality
\begin{eqnarray*}
&&W^\epsilon (x, t) - U(x, t) - \beta( t- \delta) \\
&=& \Phi (x, x, t) \leq \Phi (\bar x, \bar y, \bar t) \\
&\leq &W^\epsilon (\bar x, \bar t) - U(\bar y, \bar t)\\
&\leq &\omega( \sqrt{\frac{1}{\alpha} h(\sqrt{{\frac{K}{\alpha}}})}) + \frac{\epsilon}{\bar t - \delta}.
\end{eqnarray*}
Let $\alpha \to \infty$, then $W^\epsilon (x, t) - U(x, t) - \beta( t- \delta) \leq \frac{\epsilon}{\bar t - \delta}$. Let $\beta \to 0$, then $\epsilon \to 0$, we get $W(x, t) \leq U(x, t)$.
\end{proof}

Now, combining Theorem \ref{thoeremcorrect} and Proposition \ref{unique}, together  with Lemma \ref{LinearThm} for the $C(\overline{Q_T}) \cap C_{loc}^{2+\alpha, 1+\frac{\alpha}{2}}( Q_T)$ solution to (\ref{Linear}), we have
{\em Step III: Regularity  of the (HJB) equation without jump terms} 

\begin{proposition}\label{W21p}
Assume
\begin{enumerate}
\item
$a_{ij}, b_i, \Psi \in C^{0 + \alpha, 0 + \frac{\alpha}{2}}(\overline{Q_T}),$
\item
$(a_{ij})$ uniformly elliptic,
\item
$\Psi$ is semiconcave,
\item
$\Psi_t$ is bounded below, in the distributional sense,
\item
$\phi, f \in C^{0+\alpha, 0+\alpha/2}(\overline Q_T)$.

\end{enumerate}
Then there exists a unique viscosity solution $u \in W^{(2,1), p}_{loc}(Q_T) \cap C(\overline Q_T)$ to the PDE
\begin{eqnarray}
\left\{
\begin{array}{ll}
\max \{ u_t + Lu-f, u - \Psi \} = 0 &\mbox{in } Q_T,\\
u = \phi&\mbox{on }  \partial_P Q_T,
\end{array}
\right.
\end{eqnarray}
for any $p>1$.
\end{proposition}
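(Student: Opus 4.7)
The plan is to assemble the three ingredients that have already been established in this section: Lemma~\ref{LinearThm} for the linear problem, Theorem~\ref{thoeremcorrect} for the homogeneous obstacle problem, and Proposition~\ref{unique} for comparison. The shift trick indicated in the text before the proposition statement does the job: one absorbs the inhomogeneity $f$ and the boundary datum $\phi$ into the obstacle, reducing the inhomogeneous problem to the homogeneous HJB already solved.

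Concretely, I would first invoke Lemma~\ref{LinearThm}, using the Hölder hypotheses on $a_{ij}, b_i, f, \phi$ and the uniform ellipticity of $(a_{ij})$, to obtain a unique classical solution $u_0 \in C^{0+\alpha,0+\alpha/2}(\overline{Q_T}) \cap C^{2+\alpha,1+\alpha/2}_{loc}(Q_T)$ of the linear PDE $u_{0,t} + L u_0 = f$ in $Q_T$ with $u_0 = \phi$ on $\partial_P Q_T$. Setting $v := u - u_0$, a direct computation at the level of test functions (valid because $u_0$ is classical in the interior) shows that $u$ is a viscosity solution of the HJB if and only if $v$ is a viscosity solution of
\begin{equation*}
\max\{\,v_t + L v,\; v - \overline{\Psi}\,\} = 0 \quad\text{in } Q_T, \qquad v = 0 \quad\text{on } \partial_P Q_T,
\end{equation*}
with $\overline{\Psi} := \Psi - u_0$.

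Next, I would verify that $\overline{\Psi}$ inherits the four structural hypotheses required by Theorem~\ref{thoeremcorrect}. The $C^{0+\alpha,0+\alpha/2}$ regularity of $\overline{\Psi}$ on $\overline{Q_T}$ follows from the corresponding regularity of $\Psi$ and $u_0$; semiconcavity of $\overline{\Psi}$ in $x$ follows from the semiconcavity of $\Psi$ together with $u_0 \in C^{2+\alpha,1+\alpha/2}_{loc}(Q_T)$; and the distributional lower bound on $\overline{\Psi}_t$ follows from the hypothesis on $\Psi_t$ and the interior $C^{1+\alpha/2}$ regularity of $u_0$ in $t$. Theorem~\ref{thoeremcorrect} then yields a viscosity solution $v \in W^{(2,1),p}(Q_T)$ for every $p > 1$, and $u := v + u_0$ is the desired $W^{(2,1),p}_{loc}(Q_T) \cap C(\overline{Q_T})$ viscosity solution. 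Uniqueness is an immediate application of Proposition~\ref{unique}.

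The main obstacle I anticipate is the second step, namely the transfer of the structural hypotheses from $\Psi$ to $\overline{\Psi}$: the linear solution $u_0$ is only $C^{2+\alpha,1+\alpha/2}$ locally in $Q_T$ and merely $C^{0+\alpha,0+\alpha/2}$ up to $\partial_P Q_T$, so the semiconcavity and distributional $t$-bound are only directly available away from the parabolic boundary. The natural remedy is to work on an exhaustion of $Q_T$ by slightly shrunken cylinders $Q' \Subset Q_T$, apply Theorem~\ref{thoeremcorrect} on each, and pass to the limit using the uniform $W^{(2,1),p}$ bounds together with the $C(\overline{Q_T})$ compatibility of $v$ from the boundary continuity provided by Lemma~\ref{LinearThm}; the $W^{(2,1),p}_{loc}$ (rather than global $W^{(2,1),p}$) conclusion of the proposition is precisely what accommodates this localization.
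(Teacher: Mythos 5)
Your proposal is correct and follows essentially the same route as the paper, which obtains the result precisely by combining Lemma~\ref{LinearThm} (to produce $u_0$ and reduce to the homogeneous obstacle problem with $\overline{\Psi}=\Psi-u_0$), Theorem~\ref{thoeremcorrect} (existence with $W^{(2,1),p}$ regularity), and Proposition~\ref{unique} (uniqueness by comparison). Your additional remark about the merely local interior regularity of $u_0$ and the consequent need to localize is a point the paper passes over silently, and it is consistent with the proposition's $W^{(2,1),p}_{loc}$ conclusion.
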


Finally, since $M u(x, t)$ is semi-concave from  Lemma \ref{semiconcave}, replacing  $\Psi (x, t)$ by  $M u(x, t)$ gives us the regularity property of the value function.

\begin{theorem}\label{thm2p}
Under Assumption 1-8, and no jumps ($\pi(x,t,dz) = 0$), the value function $V(x, t)$ is a $W^{(2,1), p}_{loc} (\mathbb{R}^n \times (0, T))$ viscosity solution to the  (HJB) equation  with $1 < p < \infty$. In particular, for each $t \in [0, T)$, $V(\cdot, t) \in C_{loc}^{1, \gamma}(\mathbb{R}^n)$ for any $0<\gamma < 1$.
\end{theorem}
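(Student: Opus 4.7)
The plan is to localize on parabolic cylinders and invoke Proposition~\ref{W21p}. Time-invert by setting $u(x,t) = V(x, T-t)$ and, for arbitrary $R>0$ and $\delta \in (0,T)$, fix the cylinder $Q_T = B(0,R) \times (\delta,T]$. In these coordinates the HJB becomes the forward-parabolic equation $\max\{u_t + Lu - f,\, u - Mu\} = 0$ in $Q_T$ with $u$ prescribed on $\partial_P Q_T$ (the initial slice $\{t=\delta\}\cap \overline{B(0,R)}$ together with the lateral face $\partial B(0,R) \times [\delta,T]$). Setting $\Psi = Mu$ and $\phi = u|_{\partial_P Q_T}$ reduces the problem to checking the hypotheses of Proposition~\ref{W21p}.

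The verification goes as follows. H\"older regularity of the coefficients $a_{ij}, b_i$ on the bounded set $\overline{Q_T}$ together with uniform ellipticity are taken as part of the standing additional hypotheses for this theorem (they are required both in Lemma~\ref{LinearThm} and in the proof of Proposition~\ref{W21p}, and in particular Assumption~\ref{assumpLip} already yields spatial Lipschitz, hence H\"older, regularity). H\"older regularity of $f$ is Assumption~\ref{assumpHolder}, and H\"older regularity of $\phi = V(\cdot, T-\cdot)$ on $\partial_P Q_T$ is supplied by Lemma~\ref{Holder}. Semiconcavity of $\Psi = Mu$ in $x$ and the one-sided distributional bound $\Psi_t \ge -C$ both follow from Lemma~\ref{semiconcave}, where one notes that under time reversal the upper bound on $(MV)_t$ becomes precisely a lower bound on $(Mu)_t$, so the signs line up.

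With all hypotheses checked, Proposition~\ref{W21p} produces, for each $p>1$, a unique viscosity solution $\tilde u \in W^{(2,1),p}_{loc}(Q_T) \cap C(\overline{Q_T})$. Theorem~\ref{thmvis}, specialized to the case $\pi \equiv 0$ so that $I^1_\theta$ and $I^2_\theta$ vanish, says that $u$ itself is a continuous viscosity solution of the same equation in $Q_T$ taking the same boundary values on $\partial_P Q_T$. The local uniqueness statement of Proposition~\ref{unique} then forces $\tilde u = u$ on $Q_T$, so $u \in W^{(2,1),p}_{loc}(Q_T)$. Since $R$ and $\delta$ were arbitrary, exhausting $\mathbb{R}^n \times (0,T)$ by such cylinders gives $V \in W^{(2,1),p}_{loc}(\mathbb{R}^n \times (0,T))$ for every $1 < p < \infty$. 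The $C^{1,\gamma}_{loc}$ spatial conclusion is then a direct consequence of the parabolic Sobolev embedding: given $\gamma \in (0,1)$, pick $p$ large enough (relative to $n$ and $\gamma$) so that $W^{(2,1),p}_{loc} \hookrightarrow C^{1+\gamma,\, (1+\gamma)/2}_{loc}$, and restrict to a fixed time slice.

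The main obstacle, and essentially the only nonroutine step, is bookkeeping around the time inversion and the boundary identification: one must confirm that the distributional one-sided bound on $(Mu)_t$ acquires the correct sign after the reversal (it does), that the H\"older exponents for $\phi$ coming from Lemma~\ref{Holder} are compatible with the norms demanded inside Proposition~\ref{W21p}, and that local rather than global uniqueness is enough to identify the constructed approximation-limit $\tilde u$ with $u$ on each $Q_T$. It is precisely this identification step that upgrades the existence-by-penalization argument of Section~\ref{secnojumps} into a genuine regularity statement about the original value function, and it is crucial that Proposition~\ref{unique} was formulated as a local uniqueness result compatible with bounded cylinders.
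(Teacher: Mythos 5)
Your proposal is correct and follows essentially the same route as the paper: time-invert, localize to cylinders $Q_T$, apply Proposition~\ref{W21p} with $\Psi = Mu$ and boundary data $\phi = V(\cdot,T-\cdot)$ (whose hypotheses are supplied by Lemmas~\ref{Holder} and~\ref{semiconcave}), identify the constructed $W^{(2,1),p}$ solution with the value function via Theorem~\ref{thmvis} and the local uniqueness of Proposition~\ref{unique}, and then exhaust and embed. Your explicit bookkeeping of the sign flip on $(Mu)_t$ under time reversal and of the identification step is exactly the content the paper compresses into its one-line remark following Proposition~\ref{W21p}.
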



 In fact, if one adds additional assumption of   $a_{ij}$ and $b_i$  in $W^{(2,1), \infty}_{loc} (\mathbb{R}^n \times [0, T])$, then with more detailed and somewhat tedious analysis, one can establish $W^{(2, 1), \infty}_{loc}$ regularity for the value function. For more details, see
 Chen \cite{Chen12}.

\subsection{Regularity for Value Function of First-Order Jump Diffusion}\label{regsec1}

To proceed, we will  add two new assumptions in this subsection.

\begin{assump}\label{add1}
The operator $I[\phi]$ is of order-$\delta$, i.e, for $(x, t)$ in any compact subset of $\mathbb{R}^n \times [0, T]$, there exists $C$ such that
\begin{eqnarray*}
\int_{|z| <1} |z|^\delta \, \pi(x, t, dz) < C < \infty.
\end{eqnarray*}
\end{assump}

\emph{Remark.} Since $\delta \in (0, 1]$, this implies that $\int_{|z|<1} |z| \, \pi(x, t, dz) < C$.

\begin{assump}\label{add3}
The measure $\pi(x, t, dz)$ is continuous with respect to the weighted total variation, i.e., for $(x_n, t_n) \to (x_0, t_0)$,
\begin{eqnarray*}
\int \left( |z|^\gamma + |z|^\delta \right) |\pi(x_n, t_n, dz) - \pi(x_0, t_0, dz)| \to 0.
\end{eqnarray*}

\end{assump}

\begin{proposition}
With additional assumptions \ref{add1}  and \ref{add3}, there exists a unique $u \in W^{(2,1),p}_{loc}(Q_T) \cap C(\overline Q_T)$ viscosity solution of the following equations,
\begin{eqnarray}\label{vis2}
\left\{
\begin{array}{lll}
\max \{ -u_t +L u - f - Iu, u - M V\} = 0 &\mbox{in } Q_T,\\
u = V(x, T-t) &\mbox{on } \partial_P Q_T,
\end{array}
\right.
\end{eqnarray}
in the following sense. For any $\phi \in C^2 (\mathbb{R}^n \times [0, T])$,
\begin{enumerate}
\item
If $u - \phi$ achieves a local maximum at $(x_0, t_0) \in Q_T$, then
\begin{eqnarray*}
\max \{ -\phi_t+ L\phi- f- I^0 [V], u - M V \} (x_0, t_0) \leq 0;
\end{eqnarray*}
\item
If $u - \phi$ achieves a local minimum at $(x_0, t_0) \in Q_T$, then
\begin{eqnarray*}
\max \{ -\phi_t + L\phi - f - I^0 [V] , u - M V \}(x_0, t_0)\geq 0.
\end{eqnarray*}
\end{enumerate}
Here
\begin{eqnarray*}
I^0 [V] (x, t)=&\int [V (x+z, t) - V(x, t)- D\phi(x, t) \cdot z 1_{|z| < 1} ] \pi(x, t, dz),
\end{eqnarray*}
with the boundary condition $u = g$ on $\mathbb{R}^n \times \{t = T\}$.
\end{proposition}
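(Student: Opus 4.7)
The plan is to reduce the proposition to the jump-free Proposition \ref{W21p} by absorbing the nonlocal term as a known source. Since the viscosity-solution definition is stated with $I^0[V]$ — where $V$ is already known from the preceding sections — rather than $I^0[u]$, the expression $\widetilde{f}(x,t) := f(x,t) + I^0[V](x,t)$ is a deterministic function of $(x,t)$ alone. The equation then becomes $\max\{-u_t + Lu - \widetilde{f},\, u - MV\} = 0$ with boundary data $V(x,T-t)$ on $\partial_P Q_T$, fitting the template of Proposition \ref{W21p} with $\Psi = MV$. Lemma \ref{semiconcave} already supplies semiconcavity of $MV$ in $x$ and a distributional upper bound on $MV_t$, so the structural hypotheses on $\Psi$ hold. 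Provided $\widetilde{f}$ has the required $C^{0+\alpha,0+\alpha/2}_{loc}$ regularity, Proposition \ref{W21p} yields the existence of $u \in W^{(2,1),p}_{loc}(Q_T) \cap C(\overline{Q_T})$ for every $p > 1$.

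The substantive step, and what I expect to be the main obstacle, is verifying that $I^0[V]$ is locally Hölder continuous in $(x,t)$. I would split the integrand into small-jump ($|z|<1$) and large-jump ($|z|\geq 1$) pieces. The finite-variation hypothesis $\delta\leq 1$ combined with Assumption \ref{add1} gives $\int_{|z|<1}|z|\,\pi(x,t,dz)<\infty$, so the compensator $D\phi(x,t)\cdot z\, 1_{|z|<1}$ can be detached cleanly and absorbed as a modified first-order drift in $L$; this sidesteps any need for pointwise differentiability of $V$ inside the integrand. The remaining small-jump contribution is then bounded by $C(1+|x|^\gamma)\int_{|z|<1}|z|^\delta\,\pi(x,t,dz)$ via the Hölder estimate of Lemma \ref{Holder}, and the large-jump contribution by $C(1+|x|^\gamma+|z|^\gamma)|z|^\delta$ integrated against $\pi$, which is finite by Assumption \ref{assumpInt}.

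To promote pointwise finiteness to an $(x,t)$-Hölder modulus, I would decompose $|I^0[V](x,t) - I^0[V](x',t')|$ into two pieces: (i) the change in the integrand $V(\cdot+z,\cdot)-V(\cdot,\cdot)$ under a fixed measure $\pi(x,t,dz)$, controlled via Lemma \ref{Holder}; and (ii) the change in the measure $\pi(x,t,dz)-\pi(x',t',dz)$ against a fixed integrand, controlled by Assumption \ref{add3}. Piece (ii) is the delicate point: Assumption \ref{add3} supplies only qualitative continuity in the weighted total variation, so extracting an $\alpha$-modulus requires either a quantitative strengthening of that assumption or an interpolation against the uniform bound $\int(|z|^\gamma+|z|^\delta)\,\pi(x,t,dz)\leq C$ from Assumption \ref{assumpInt}. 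If a full Hölder estimate is unattainable from the given hypotheses, one can fall back on $L^p_{loc}$ data, since the conclusion $u \in W^{(2,1),p}_{loc}$ really only depends on $L^p$-integrability of the source together with the semiconcavity of $\Psi$.

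Uniqueness follows from a verbatim repetition of the doubling-of-variables argument in Proposition \ref{unique}: $\widetilde{f}$ enters only as a continuous source, and the $I^0[V]$ contribution plays no active role in the comparison because both sub- and supersolutions are tested against the same fixed $V$. Combined with the existence supplied by Proposition \ref{W21p}, this yields the stated result, and subsequent identification $u = V$ via this uniqueness (once $V$ itself is verified to satisfy the reduced equation) produces the $W^{(2,1),p}_{loc}$ regularity of the value function.
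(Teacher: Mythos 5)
Your proposal is correct and follows essentially the same route as the paper: absorb the compensator into a modified drift $\overline b_i$ and the nonlocal term into a modified source $\overline f = f + I^0[V]$, verify continuity of both via the decomposition into a fixed-measure/varying-integrand piece (Lemma \ref{Holder}) and a fixed-integrand/varying-measure piece (Assumption \ref{add3}), then invoke Proposition \ref{W21p} with $\Psi = MV$ and the uniqueness argument of Proposition \ref{unique}. The obstacle you flag — that Assumption \ref{add3} yields only continuity, not a H\"older modulus, for the modified source — is exactly the caveat the paper acknowledges, and your fallback (the $W^{(2,1),p}$ conclusion needs only boundedness/$L^p$ data for the source) is the paper's resolution as well.
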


\begin{proof}
Let $\overline b_i = b_i - \int z_i M(x, t, dz)$ and $\overline f = \int u(x+z, t) - u(x, t) M(x, t, dz)$.we will show that $\overline b_i$ and $\overline f$ are continuous.

Step 1,  $\overline f$ is continuous:

Let $x_n \to x_0$, then
\begin{eqnarray*}
&&\left| \int [V(x_n + z, t) - V(x_n, t)] \, \pi(x_n, t, dz) - \int [V(x_0 + z, t) - V(x_0, t)] \, \pi(x_0, t, dz) \right|\\
&\leq &\left| \int \left( V(x_n + z, t) - V(x_n, t)\right) \left( \pi(x_n, t, dz) - \pi(x_0, t, dz) \right) \right| \\
&&+ \left| \int \left( V(x_n + z, t) - V(x_n, t)\right) - \left( V(x_0 + z, t) - V(x_0, t) \right) \pi(x_0, t, dz) \right|.
\end{eqnarray*}
For the first term,
\begin{eqnarray*}
&&\left| \int \left( V(x_n + z, t) - V(x_n, t)\right) \left( \pi(x_n, t, dz) - \pi(x_0, t, dz) \right) \right|\\
&\leq &C \int \left( 1 + |x_n|^\gamma + |z|^\gamma \right) |z|^\delta |\pi(x_n, t, dz) - \pi(x_0, t, dz)|\\
&\leq &C \int |z|^\gamma + |z|^\delta |\pi(x_n, t, dz) - \pi(x_0, t, dz)| \to 0.
\end{eqnarray*}
For the second term, the integrand $\to 0$ as $n \to \infty$. So by the dominated convergence theorem,
\begin{eqnarray*}
&&\left| \int \left( V(x_n + z, t) - V(x_n, t)\right) - \left( V(x_0 + z, t) - V(x_0, t) \right) \pi(x_0, t, dz) \right|\\
&\leq &\int C(1+|x_n|^\gamma + |z|^\gamma ) |z|^\delta + C(1+|x_0|^\gamma + |z|^\gamma ) |z|^\delta \pi(x_0,t, dz) \\
&\leq & C \int ( |z|^\gamma + |z|^\delta ) \pi(x_0, t, dz) < \infty.
\end{eqnarray*}
Therefore $\overline f$ is continuous in $x$. Now let $t_n \to t_0$,
\begin{eqnarray*}
&&\left| \int V(x + z, t_n) - V(x, t_n) \pi(x, t_n, dz) - \int V(x + z, t_0) - V(x, t_0) \pi(x, t_0, dz) \right|\\
&\leq &\left| \int \left( V(x + z, t_n) - V(x, t_n)\right) \left( \pi(x, t_n, dz) - \pi(x, t_0, dz) \right) \right| \\
&&+ \left| \int \left( V(x + z, t_n) - V(x, t_n)\right) - \left( V(x + z, t_0) - V(x, t_0) \right) \pi(x_0, t, dz) \right|.
\end{eqnarray*}
For the first term,
\begin{eqnarray*}
&&\left| \int \left( V(x + z, t_n) - V(x, t_n)\right) \left( \pi(x, t_n, dz) - \pi(x, t_0, dz) \right) \right|\\
&\leq &\int C(1+|x|^\gamma + |z|^\gamma) |z|^\delta \left| \pi(x, t_n, dz) - \pi(x, t_0, dz) \right|\\
&\leq &C \int |z|^\gamma + |z|^\delta \left| \pi(x, t_n, dz) - \pi(x, t_0, dz) \right| \to 0,
\end{eqnarray*}
as $t_n \to t_0$. For the second term, the dominated convergence theorem implies
\begin{eqnarray*}
&&\left| \int \left[ \left( V(x + z, t_n) - V(x, t_n)\right) - \left( V(x + z, t_0) - V(x, t_0) \right) \right] \pi(x_0, t, dz) \right|\\
&\leq &\int C(1+|x|^\gamma + |z|^\gamma) |z|^\delta \pi(x, t_0, dz) < \infty.
\end{eqnarray*}
Therefore $\overline f$ is continuous in $t$.

Step 2, $\overline b_i$ is continuous:

This follows easily from Assumption \ref{add3}. Let $(x_n, t_n) \to (x_0, t_0)$,
\begin{eqnarray*}
\left| \int_{|z|<1} z [\pi(x_n, t_n, dz) - \pi(x_0, t_0, dz)] \right| \leq &\int_{|z|<1} |z| |\pi(x_n, t_n, dz) - \pi(x_0, t_0, dz)|\\
\leq &\int |z|^\delta  |\pi(x_n, t_n, dz) - \pi(x_0, t_0, dz)|,
\end{eqnarray*}
which goes to $0$ as $n \to \infty$.

Step 3, replace $b_i$ by $\overline b_i = b_i - \int z_i \pi(x, t, dz)$ and $f$ by  $\overline f = f + \int (u(x+z, t) - u(x, t)) \pi(x, t, dz)$, and by Proposition \ref{W21p}, $u \in W^{(2,1),p}_{lot}(Q_T) \cap C(\overline Q_T)$. The only caveat is the H$\ddot o$lder continuity assumption on $f$ is not satisfied here. But careful checking of our analysis for the case without jumps in Section \ref{secnojumps} reveals that we could  relax our assumption on $f$ to be bounded instead of  H$\ddot o$lder continuous, and take $f^\epsilon$ to be H$\ddot o$lder continuous and converge to $f$ in $L^\infty$.

\end{proof}

Notice, however,  the ``apparent'' difference between the two types of  viscosity solutions: the one in the above proposition, and the one  in Theorem \ref{thmvis}. Therefore, we need to show that the viscosity solution  in Theorem \ref{thmvis} is also a viscosity solution of Eqn. (\ref{vis2}). Then, with the  standard local uniqueness of HJB of Eqn. (\ref{vis2}),  the regularity of value function is obtained.

\begin{theorem}
Under Assumptions 1-11, $V$ is also a solution of Eqn. (\ref{vis2}), with additional assumptions \ref{add1} and \ref{add3}.
\end{theorem}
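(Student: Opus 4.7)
The plan is to derive the viscosity inequalities of (\ref{vis2}) from those of Theorem~\ref{thmvis} by passing to the limit $\theta \to 0^+$ at each fixed test point. Fix $(x_0, t_0)$ and $\phi \in C^2$ such that $V - \phi$ attains its local extremum there with $V(x_0, t_0) = \phi(x_0, t_0)$. Theorem~\ref{thmvis} asserts, for every $\theta \in (0,1)$,
$$\max\bigl\{-\phi_t + L\phi - f - I^1_\theta[\phi] - I^2_\theta[V],\, V - MV\bigr\}(x_0, t_0) \leq 0$$
in the subsolution case (with the inequality reversed and the opposite type of extremum in the supersolution case). The obstacle term $V - MV$ is independent of $\theta$, so it suffices to show the pointwise convergence $I^1_\theta[\phi](x_0, t_0) + I^2_\theta[V](x_0, t_0) \to I^0[V](x_0, t_0)$ as $\theta \to 0$.

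For the small-jump contribution, I would use Taylor's theorem on $\phi \in C^2$ to bound $|\phi(x_0+z, t_0) - \phi(x_0, t_0) - D\phi(x_0, t_0)\cdot z| \leq \tfrac{1}{2}\|D^2\phi\|_\infty |z|^2$ on $|z| < \theta < 1$, which together with $\int_{|z|<1}|z|^2\,\pi(x_0, t_0, dz) < \infty$ from Assumption~\ref{assumpInt} yields $I^1_\theta[\phi](x_0, t_0) \to 0$ by dominated convergence.

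For the remaining piece, write
$$I^0[V](x_0, t_0) - I^2_\theta[V](x_0, t_0) = \int_{|z|<\theta}\bigl[V(x_0+z, t_0) - V(x_0, t_0) - D\phi(x_0, t_0)\cdot z\,\mathbf{1}_{|z|<1}\bigr]\pi(x_0, t_0, dz).$$
By the H\"older estimate in Lemma~\ref{Holder}, $|V(x_0+z, t_0) - V(x_0, t_0)| \leq C|z|^\delta$ uniformly in $|z|\leq 1$ (with $C$ depending on a neighborhood of $x_0$), and since $\delta \leq 1$ we also have $|D\phi(x_0, t_0)\cdot z| \leq C|z| \leq C|z|^\delta$ for $|z|<1$. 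The integrand is therefore dominated by $C|z|^\delta$, which is $\pi(x_0, t_0, \cdot)$-integrable on $\{|z|<1\}$ by Assumption~\ref{add1}. A second application of dominated convergence gives the desired limit, and the viscosity inequality for $I^0[V]$ follows upon letting $\theta \to 0$. The supersolution case is analogous, interchanging inequality directions and using that $V - \phi$ attains a local minimum rather than a maximum.

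The only substantive technical point is the matching of integrability at the origin: the H\"older exponent $\delta$ supplied by Lemma~\ref{Holder} must be compatible with the order of $\pi$ near zero, which is precisely what Assumption~\ref{add1} provides. Assumption~\ref{add3} plays no direct role in this pointwise passage to the limit—it was invoked upstream to ensure the continuity of the modified drift and source in the preceding proposition establishing the well-posedness of (\ref{vis2}). Combining the limiting inequality with the standard local uniqueness for (\ref{vis2}) then identifies $V$ with the $W^{(2,1),p}_{\mathrm{loc}}$ solution constructed there.
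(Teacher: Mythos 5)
Your proposal is correct and follows essentially the same route as the paper: fix the test point, invoke the $\theta$-dependent viscosity inequalities of Theorem \ref{thmvis}, and let $\theta \to 0^+$, with the limit $I^1_\theta[\phi] + I^2_\theta[V] \to I^0[V]$ justified by the $|z|^2$ bound near the origin and the order-$\delta$ condition of Assumption \ref{add1}. The paper merely asserts this convergence in one line, so your dominated-convergence argument (and your observation that Assumption \ref{add3} enters only upstream, in the well-posedness of (\ref{vis2})) simply supplies details the paper omits.
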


\begin{proof}
Suppose $V - \phi$ has a local minimum in $B(x_0, \theta_0) \times [t_0, t+\theta_0)$. Then we know that
\begin{eqnarray*}
\max\{ -\phi_t + L\phi - f - I^1_\theta[\phi] - I^2_\theta[V] , V - MV\} \leq 0
\end{eqnarray*}
for any $0 < \theta < \theta_0$. And with the additional assumptions, $I^1_\theta[\phi] + I^2_\theta[V] \to I^0 [V] $ as $\theta \to 0$. Therefore we have
\begin{eqnarray*}
\max\{ -\phi_t + L\phi - f - I^0[V] , V - MV\} \leq 0
\end{eqnarray*}
The other inequalities can be derived similarly.
\end{proof}

In summary,
\begin{theorem}\label{thmmain}
({\bf Regularity of the Value Function and Uniqueness}) 
Under Assumptions 1-11, the value function $V(x, t)$ is a unique $W^{(2,1), p}_{loc} (\mathbb{R}^n \times (0, T))$ viscosity solution to the (HJB) equation  with $2\leq p < \infty$. In particular, for each $t \in [0, T)$, $V(\cdot, t) \in C_{loc}^{1, \gamma}(\mathbb{R}^n)$ for any $0<\gamma < 1$.
\end{theorem}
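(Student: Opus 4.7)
The plan is to combine the three preceding ingredients: the existence of a $W^{(2,1),p}_{loc}$ viscosity solution of (\ref{vis2}) on every bounded cylinder (the proposition preceding this theorem), the identification of $V$ itself as a viscosity solution of (\ref{vis2}) (the theorem preceding this theorem), and finally a Sobolev embedding to extract the pointwise $C^{1,\gamma}$ regularity in space. Note that once the non-local term $I^0[V]$ is interpreted as a fixed continuous source (using Assumptions \ref{add1} and \ref{add3} to get its continuity, as in Steps 1--2 of the preceding proof), equation (\ref{vis2}) becomes a purely local second-order HJB to which Proposition \ref{W21p} and Proposition \ref{unique} apply directly.

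First, I would fix an arbitrary bounded cylinder $Q_T = B(0,R) \times (\delta, T]$ with $R>0$ and $\delta \in (0,T)$. By the preceding proposition, there is a unique viscosity solution $u \in W^{(2,1),p}_{loc}(Q_T) \cap C(\overline{Q_T})$ to (\ref{vis2}) with the boundary data $u(x,t) = V(x, T-t)$ on $\partial_P Q_T$ and obstacle $MV$. Next, the preceding theorem shows that the time-inverted value function $\widetilde V(x,t) := V(x, T-t)$ is also a viscosity solution of (\ref{vis2}) in the same sense, and it trivially matches the boundary data $V(x,T-t)$ on $\partial_P Q_T$. Invoking the uniqueness part of the preceding proposition (which in turn rests on Proposition \ref{unique} applied with the continuous source $\bar f := f + I^0[V]$ and the continuous obstacle $MV$), we conclude $\widetilde V \equiv u$ on $\overline{Q_T}$. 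Therefore $\widetilde V \in W^{(2,1),p}_{loc}(Q_T)$ for every $p \in [2,\infty)$.

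Since $R$ and $\delta$ were arbitrary, we obtain $V \in W^{(2,1),p}_{loc}(\mathbb{R}^n \times (0,T))$ for all $p \in [2,\infty)$. To upgrade this to the spatial Hölder regularity, fix $\gamma \in (0,1)$ and choose $p$ large enough that $2 - (n+2)/p > 1+\gamma$; the parabolic Sobolev embedding (cf.\ \cite{Lieberman96}) then gives $W^{(2,1),p}(U) \hookrightarrow C^{1+\gamma, (1+\gamma)/2}(\overline U)$ on every open $U$ whose closure lies in $\mathbb{R}^n \times (0,T)$. Restricting to a fixed time slice yields $V(\cdot, t) \in C^{1,\gamma}_{loc}(\mathbb{R}^n)$ for every $t \in [0, T)$.

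The main obstacle has already been resolved in the theorem preceding this one, namely the compatibility of two viscosity formulations: the one in Theorem \ref{thmvis} involving the splitting $I^1_\theta[\phi] + I^2_\theta[V]$ and the more concise one in (\ref{vis2}) involving $I^0[V]$. With Assumptions \ref{add1} and \ref{add3} in force, the passage $\theta \downarrow 0$ shows $I^1_\theta[\phi] + I^2_\theta[V] \to I^0[V]$, which is exactly what is needed to put the value function into the framework of the preceding proposition. After that bridge, the present theorem is essentially a uniqueness plus localization plus embedding argument; the delicate analytical work — the Schauder-based construction in Theorem \ref{thoeremcorrect}, the comparison argument in Proposition \ref{unique}, the semiconcavity of $MV$ from Lemma \ref{semiconcave}, and the continuity of $I^0[V]$ in the preceding proof — has already been done.
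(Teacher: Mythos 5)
Your proposal is correct and follows essentially the same route as the paper, which presents Theorem \ref{thmmain} as a summary of the two immediately preceding results: the local existence--uniqueness of a $W^{(2,1),p}_{loc}$ viscosity solution of (\ref{vis2}) on each cylinder $Q_T$, and the identification of the (time-inverted) value function as a viscosity solution of (\ref{vis2}), followed by localization and the parabolic Sobolev embedding for the $C^{1,\gamma}_{loc}$ statement. Your explicit verification of the embedding exponent ($1-(n+2)/p>\gamma$) fills in a step the paper leaves implicit, but introduces no new idea.
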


\section{Appendix}

\subsection{Appendix A: Proof of Lemma \ref{intpreserv}}
\begin{proof}
First assume that $\int_\tau^T H^2 d[M]$ is bounded and $M$ is a $L^2$-martingale. The conclusion is clearly true when $H$ is an elementary previsible process of the form $\sum_i^m Z_i 1_{(S_i, T_i]}$. Now let $H^{(n)}$ be a sequence of such elementary previsible process that converges to $H$ uniformly on $\Omega\times (0, T]$. Let $N^{(n)}_t =  \int_\tau^t H^{(n)}_s dM_s$, and $N_t = \int_\tau^t H_s dM_s$.

First we show that quadratic variation is preserved under regular conditional probability distribution. By Lemma \ref{stoppingMG}, $Q_t = M_t - M_{t \wedge \tau}$ is a martingale. Consider the quadratic variation $[Q]$ under $\mathbb{P}$. By definition, $Q^2_t - [Q]_t$ is a martingale. Thus by Lemma \ref{stoppingMG}, for almost every $\omega$, $Q^2_t - [Q]_t - (Q^2_{t \wedge \tau} - [Q]_{t \wedge \tau})$ is martingale under $(\mathbb{P}|{\cal F}_\tau)(\omega)$.
\begin{eqnarray*}
&&Q^2_t - [Q]_t - (Q^2_{t \wedge \tau} - [Q]_{t \wedge \tau})\\
&= &(M_t - M_{t \wedge \tau})^2 - ([M]_t - [M]_{t \wedge \tau}) \\
&&- \left( (M_{t \wedge \tau} - M_{t \wedge \tau})^2 - ([M]_{t \wedge \tau} - [M]_{t \wedge \tau})  \right)\\
&= &Q_t^2 - ([M]_t - [M]_{t \wedge \tau})
\end{eqnarray*}
This shows that under $(\mathbb{P}|{\cal F}_\tau)(\omega)$, $[Q]^{(\mathbb{P}|{\cal F}_\tau)(\omega)}_t = [M]_t - [M]_{t \wedge \tau}$. This allows us to simply write $[ \cdot ]$ instead of $[\cdot]^{(\mathbb{P}|{\cal F}_\tau)(\omega)}$. Hence,
\begin{eqnarray*}
\int_\tau^T H^2 d[M] = \int_\tau^T H^2 d[N]
\end{eqnarray*}
is bounded under $(\mathbb{P}|{\cal F}_\tau)(\omega)$.

By the definition of $H^{(n)}$,
\begin{eqnarray*}
&& \mathbb{E}^{\mathbb{P}} \left[  \liminf_n  \mathbb{E}^{(\mathbb{P}|{\cal F}_\tau)(\omega)} \left[ \int_\tau^T |H^{(n)}_s - H_s|^2 d[Q]_s \right] \right]\\
&\leq & \liminf_n   \mathbb{E}^{\mathbb{P}} \left[ \mathbb{E}^{(\mathbb{P}|{\cal F}_\tau)(\omega)} \left[ \int_\tau^T |H^{(n)}_s - H_s|^2 d[Q]_s \right] \right]\\
&= &\liminf_n \mathbb{E}^{\mathbb{P}} \left[ \int_\tau^T |H^{(n)}_s - H_s|^2 d[M]_s  \right]^2 = 0.
\end{eqnarray*}

Thus, $\liminf_n  \mathbb{E}^{(\mathbb{P}|{\cal F}_\tau)(\omega)} \left[ \int_\tau^T |H^{(n)}_s - H_s|^2 d[Q]_s \right] = 0$ for almost every $\omega$. On the other hand,
\begin{eqnarray*}
&&\mathbb{E}^{\mathbb{P}} \left[ \liminf_n \mathbb{E}^{({\mathbb{P}}|{\cal G})(\omega)} \left[N^n_T - N_T\right]^2 \right]\\
&\leq &\liminf_n \mathbb{E}^{\mathbb{P}} \left[ \mathbb{E}^{({\mathbb{P}}|{\cal G})(\omega)} \left[N^n_T - N_T\right]^2 \right]\\
&\leq &\liminf_n \mathbb{E}^{\mathbb{P}}  \left[N^n_T - N_T\right]^2 = 0.
\end{eqnarray*}

So we have
\begin{eqnarray*}
\liminf_n \mathbb{E}^{({\mathbb{P}}|{\cal G})(\omega)} \left[ (N^n - N)_T\right]^2 = 0,
\end{eqnarray*}
for $\mathbb{P}$-a.e. $\omega$. This proves the claim. The general case follows from the localization technique.
\end{proof}

\subsection{Appendix B: Proof of Lemma \ref{theorempenalized}}

\begin{proof}
Define the operator $A: C^{2+\alpha, 1+\alpha/2}(\overline Q_T) \to C^{2+\alpha, 1+\alpha/2}(\overline Q_T)$ by the following: $A[v] = u$ is the solution to the PDE
\begin{eqnarray}
\left\{
\begin{array}{ll}
u_t + Lu + \beta_\epsilon (v - \overline \Psi)= 0&\mbox{on  } Q_T,\\
u = 0 &\mbox{on  } \partial_P Q_T.
\end{array}
\right.
\end{eqnarray}
By the Schauder's estimates (Theorem 4.28 in \cite{Lieberman96}), we have,
\begin{eqnarray*}
\|u\|_{C^{2+\alpha, 1+\alpha/2} (\overline Q_T)} &\leq C \|\beta_\epsilon (v - \overline \Psi)\|_{C^{\alpha, \alpha/2} (\overline Q_T)}\\
&\leq C_\epsilon \| v - \overline \Psi \|_{C^{\alpha, \alpha/2} (\overline Q_T)}.
\end{eqnarray*}
Thus the map $A$ is clearly continuous and compact.

The next step is to show that the set $\{ u: u = \lambda A[u], 0 \leq \lambda \leq 1\}$ is bounded. Then we can apply Schaefer's Fixed Point Theorem (Theorem 9.4 in (\cite{Evans98}). Suppose $u = \lambda A[u]$ for some $0 \leq \lambda \leq 1$. Then,
\begin{eqnarray}
\left\{
\begin{array}{ll}
u_t + Lu + \lambda \beta_\epsilon (u - \overline \Psi)= 0&\mbox{on  } Q_T,\\
u = 0 &\mbox{on  } \partial_P Q_T.
\end{array}
\right.
\end{eqnarray}
Since
\begin{eqnarray*}
\|u\|_{C^{2+\alpha, 1+\alpha/2} (\overline Q_T)} &\leq& C_\epsilon \| v - \overline \Psi \|_{C^{\alpha, \alpha/2} (\overline Q_T)})\\
&\leq& C_\epsilon (\|u\|_{C^{\alpha, \alpha/2} (\overline Q_T)} + \|\overline \Psi\|_{C^{\alpha, \alpha/2} (\overline Q_T)})\\
&\leq& C_\epsilon (\|u\|_{C(\overline Q_T)}^{1/2}\|u\|^{1/2}_{C^{2+\alpha, 1+\alpha/2} (\overline Q_T)} + \|\overline \Psi\|_{C^{\alpha, \alpha/2} (\overline Q_T)})\\
&\leq &\frac{1}{2} \|u\|_{C^{2+\alpha, 1+\alpha/2} (\overline Q_T)} + C_\epsilon (\|u\|_{C(\overline Q_T)} + \|\overline \Psi\|_{C^{\alpha, \alpha/2} (\overline Q_T)}).
\end{eqnarray*}
Thus
\begin{eqnarray}
\|u\|_{C^{2+\alpha, 1+\alpha/2} (\overline Q_T)} &\leq C_\epsilon (\|u\|_{C(\overline Q_T)} + \|\overline \Psi\|_{C^{\alpha, \alpha/2} (\overline Q_T)}).
\end{eqnarray}
So we only need to bound $u$ independent of $\lambda$ now.

If $\lambda = 0$, then $u = 0$. So we can assume that $\lambda > 0$. Suppose $u$ has a maximum at $(x_0, t_0) \in Q_T$. Then, $- \lambda \beta_\epsilon (u(x_0, t_0) - \overline \Psi (x_0, t_0)) = (u_t + Lu) (x_0,t_0) \geq 0$, $\beta_\epsilon (u(x_0, t_0) - \overline \Psi (x_0, t_0)) \leq 0$, $u(x_0, t_0) \leq \overline \Psi (x_0, t_0)$, and $u = 0$ on $\partial_P Q_T$. So we get $u \leq \|\overline \Psi \|_{L^\infty(Q_T)}$.

For a lower bound, consider the open set $\Omega= \{ u < \overline \Psi\}$ in $\overline Q_T$. Since in $\Omega$, $u_t + Lu \geq 0$, $u \geq \inf_{\partial_P \Omega} u$. Yet, $\partial_P \Omega \subset \partial_P Q_T \cup \{ u \geq \overline \Psi \}$, and in both cases $u$ is bounded below. Thus we conclude that $u$ is bounded independently of $\lambda$, and $\| u\|_{L^\infty (Q_T)} \leq  \|\overline \Psi \|_{L^\infty(Q_T)}$.

Now Schaefer's Fixed Point Theorem (Theorem 9.4  in \cite{Evans98}) gives us the existence of $u \in C^{2+\alpha, 1+\alpha/2}(\overline Q_T)$ that solves (\ref{penalized}). Now we have $-\beta_\epsilon (u - \overline \Psi) \in C^{2+\alpha, 1+\alpha/2} (\overline Q_T)$. By the Schauder's estimates again, we have $u \in C^{4+\alpha, 2+\alpha/2}(\overline Q_T)$.
\end{proof}

\bibliographystyle{siam}
\bibliography{bibdata}

\end{document}